\else \newenvironment{proof}{\small{\bf Proof.}}{\hfill$\Box$\normalsize\bigskip} \fi
\newcounter{myenumerate}
\renewcommand{\themyenumerate}{\alph{myenumerate}}
\newcommand{\Pro}{\mathbb{P}}
\newcommand{\epi}{\mathop{\mathrm{epi}}}
\newcommand{\dom}{\mathop{\mathrm{dom}}}
\newcommand{\argmin}{\mathop{\arg\min}}
\newcommand{\argmax}{\mathop{\arg\max}}
\newcommand{\eqsepv}{\; , \enspace}
\newcommand{\eqfinv}{\; ,}
\newcommand{\eqfinp}{\; .}
\newcommand{\norm}[1]{\left\lVert#1\right\rVert}
\newcommand{\np}[1]{(#1)}
\newcommand{\bp}[1]{\big(#1\big)}
\newcommand{\Bp}[1]{\Big(#1\Big)}
\newcommand{\nc}[1]{[#1]}
\newcommand{\bc}[1]{\big[#1\big]}
\newcommand{\Bc}[1]{\Big[#1\Big]}
\newcommand{\na}[1]{\left\{#1\right\}}
\newcommand{\ba}[1]{\big\{#1\big\}}
\newcommand{\Ba}[1]{\Big\{#1\Big\}}
\newcommand{\nset}[2]{\na{#1\,|\, #2}}
\newcommand{\bset}[2]{\ba{#1\,\big|\, #2}}
\newcommand{\Bset}[2]{\Ba{#1\,\Big|\, #2}}
\newcommand{\ce}[1]{[\![#1]\!]}         
\newcommand{\B}{\mathcal{B}} 
\newcommand{\func}{\phi} 
\newcommand{\Func}{F}
\newcommand{\Funcb}{\mathbf{F}} 
\newcommand{\R}{\mathbb{R}} 
\newcommand{\N}{\mathbb{N}} 
\newcommand{\U}{\mathbb{U}} 
\newcommand{\X}{\mathbb{X}} 
\newcommand{\vopt}{\mathcal{V}}
\else \newtheorem{theorem}{Theorem}\fi
\else \newtheorem{proposition}[theorem]{Proposition}\fi
\else \newtheorem{definition}[theorem]{Definition}\fi
\else \newtheorem{lemma}[theorem]{Lemma}\fi
\else \newtheorem{corollary}[theorem]{Corollary}\fi
\else \newtheorem{remark}[theorem]{Remark}\fi
\else \newtheorem{assumption}{Assumption}\fi
\newcommand{\cmark}{\ding{51}}%
\newcommand{\xmark}{\ding{55}}%
\newcommand{\lvopt}{\underline{\mathcal{V}}}
\newcommand{\uvopt}{\overline{\mathcal{V}}}
\newcommand{\STATESPACE}{\mathbb{X}}
\newcommand{\projname}{\pi} 
\newcommand{\E}{\mathbb{E}}
\newcommand{\mgraph}{\mathrm{Graph}\,}
\newcommand{\ufunc}{\overline{\phi}} 
\newcommand{\lfunc}{\underline{\phi}} 
\newcommand{\lFunc}{\underline{F}}
\newcommand{\uFunc}{\overline{F}}
\newcommand{\lFuncb}{\underline{\mathbf{F}}}
\newcommand{\uFuncb}{\overline{\mathbf{F}}}
\newcommand{\Rb}{\overline{\R}}
\newcommand{\ZSelection}[2][]{ 
  \def\tst{#1}
  \def\testt{#2}
  \ifx\testt\empty
    \ifx\tst\empty
      S_t
    \else
      S_t^{#1}
    \fi
  \else
    S_t^{#1}\left( #2 \right)
  \fi
}
\newcommand{\lSelection}[2][t]{ 
  \def\tst{#1}
  \def\testt{#2}
  \ifx\tst\empty
    \ifx\testt\empty
      \underline{S}_t
    \else
      \underline{S}_t\left( #2 \right)
    \fi
  \else
    \underline{S}_{#1}\left( #2 \right)
  \fi
}
\newcommand{\uSelection}[2][t]{ 
  \def\tst{#1}
  \def\testt{#2}
  \ifx\tst\empty
    \ifx\testt\empty
      \overline{S}_t
    \else
      \overline{S}_t\left( #2 \right)
    \fi
  \else
    \overline{S}_{ #1 }\left( #2 \right)
  \fi
}
\newcommand{\lV}{\underline{V}} 
\newcommand{\uV}{\overline{V}} 
\newcommand{\supp}[1]{\mathrm{supp}\left(#1\right)} 
\newcommand{\cU}{\mathcal{U}_t^w} 
\newcommand{\dyn}[2][w]{ 
  \def\tst{#2}
  \ifx\tst\empty
    f_t^{#1}
  \else
    f_t^{#1}(#2)
  \fi
}
\newcommand{\costname}{c}
\newcommand{\cost}[2][w]{ 
  \def\tst{#2}
  \ifx\tst\empty
    c_t^{#1}
  \else
    c_t^{#1}(#2)
  \fi
}
\newcommand{\pB}[3][w]{ 
  \def\tst{#2}
  \def\tstt{#3}
  \ifx\tst\empty
    \mathcal{B}_t^{#1}
  \else
    \ifx\tstt\empty
      \mathcal{B}_t^{#1}\left( #2 \right)
    \else
      \mathcal{B}_t^{#1}\left( #2 \right) \left( #3 \right)
      \fi
  \fi
}
\newcommand{\aB}[2]{ 
  \def\tst{#1}
  \def\testt{#2}
  \ifx\tst\empty
    \mathfrak{B}_t
  \else
    \ifx\testt\empty
      \mathfrak{B}_t\left( #1 \right)
    \else
      \mathfrak{B}_t\left( #1 \right) \left( #2 \right)
    \fi
  \fi
}
\newcommand{\OBSERVATION}{\mathbb{O}}
\newcommand{\observation}{o}
\newcommand{\Observation}{{O}}
\newcommand{\belief}{b}
\newcommand{\TransitionState}[4]{P_{#4}^{#1}\np{#2,#3}}
\newcommand{\TransitionStateName}{P}
\newcommand{\ObservationLaw}[4]{Q_{#4}\nsetp{#1}{#2,#3}}
\title{Tropical Dynamic Programming for \\ Lipschitz Multistage Stochastic Programming}
\author{Marianne Akian \thanks{INRIA Saclay Ile-de-France and CMAP, \'{E}cole Polytechnique, CNRS, France.}
  \and Jean-Philippe Chancelier\thanks{CERMICS, \'{E}cole des Ponts ParisTech, France.}
  \and Beno\^{i}t Tran\footnotemark[2]\,\,\footnotemark[1]
}
\begin{document}

\maketitle

\begin{abstract}
 We present an algorithm called Tropical Dynamic Programming (TDP) which builds upper and lower approximations of the Bellman value functions in risk-neutral Multistage Stochastic Programming (MSP), with independent noises of finite supports.

 To tackle the curse of dimensionality, popular parametric variants of Approximate Dynamic Programming approximate the Bellman value function as linear combinations of basis functions. Here, Tropical Dynamic Programming builds upper (resp. lower) approximations of a given value function as min-plus linear (resp. max-plus linear) combinations of "basic functions". At each iteration, TDP adds a new basic function to the current combination following a deterministic criterion introduced by Baucke, Downward and Zackeri in 2018 for a variant of Stochastic Dual Dynamic Programming.

 We prove, for every Lipschitz MSP, the asymptotic convergence of the generated approximating functions of TDP to the Bellman value functions on sets of interest. We illustrate this result on MSP with linear dynamics and polyhedral costs.
\end{abstract}


\section{Introduction}
In this article we study multistage stochastic optimal control problems in the
hazard-decision framework (hazard comes first, decision second). Starting from a
given state $x_0$, a decision maker observes the outcome $w_1$ of a
random variable $\mathbf{W_1}$, then decides on a control $u_0$ which induces a
\emph{known} cost $c_0^{w_1}\np{x_0, u_0}$ and the system evolves to a future
state $x_1$ from a \emph{known} dynamic: $x_1 = f_0^{w_1}\np{x_0, u_0}$. Having observed
a new random outcome, the decision maker makes a new decision based on
this observation which induces a known cost, then the system evolves to a known
future state, and so on until $T$ decisions have been made. At the last step, there
are constraints on the final state $x_T$ which are modeled by a final cost function
$\psi$. The decision maker aims to minimize the average cost of her decisions.

Multistage Stochastic optimization Problems (MSP) can be formally described by
the following optimization problem
\begin{equation}
 \label{MSP}
 \begin{aligned}
   & \min_{\np{\mathbf{X}, \mathbf{U}}} \E \left[ \sum_{t=0}^{T-1} \cost[\mathbf{W_{t+1}}]{\mathbf{X_t}, \mathbf{U_t}} + \psi\np{\mathbf{X_T}} \right], \\
   & \text{s.t.} \ \mathbf{X_0} = x_0 \ \text{given}, \forall t\in \ce{0,T-1},                                                                          \\
   & \mathbf{X_{t+1}} = \dyn[\mathbf{W_{t+1}}]{\mathbf{X_t}, \mathbf{U_t}},                                                                             \\
   & \sigma\np{\mathbf{U_t}} \subset \sigma\np{ \mathbf{W_1}, \ldots , \mathbf{W_{t+1}}},
 \end{aligned}
\end{equation}
where $\np{\mathbf{W_{t}}}_{t\in \ce{1,T}}$ is a given sequence of independent random variables each with values in some measurable set $(\mathbb{W}_t,\mathcal{W}_t)$. We refer to the random variable $\mathbf{W_{t+1}}$ as a \emph{noise} and throughout the remainder of the article we assume the following on the sequence of noises.
\begin{assumption}
 \label{whitenoise}
 Each random variable $\mathbf{W_t}$ in Problem \eqref{MSP} has finite support and the sequence of random variable $\np{\mathbf{W_{t}}}_{t\in \ce{1,T}}$ is independent.
\end{assumption}

One approach to solving MSP problems is by dynamic programming, see for example
\cite{Be2016,Ca.Ch.Co.De2015,Pf.Pi2014,Sh.De.Ru2009}. For some integers
$n,m \in \N$, denote by $\X = \R^n$ the \emph{state space} and $\U = \R^m$ the
\emph{control space}. Both $\X$ and $\U$ are endowed with their euclidean
structure and borelian structure. We define the pointwise Bellman operators
$\pB{}{}$ and the average Bellman operators $\aB{}{}$ for every
$t \in \ic{0,T-1}$.  For each possible realization $w\in \mathbb{W}_{t+1}$ of
the noise $\mathbf{W_{t+1}}$, for every function $\func : \X \to \Rb$ taking
extended real values in $\Rb = \RR \cup\na{\pm\infty}$, the function
$\pB{\func}{\cdot}: \X \to \Rb$ is defined by
\begin{equation*}
 \forall x\in \X,\eqsepv  \pB{\func}{x} = \min_{u\in \U} \Bp{\cost{x,u} + \func\bp{\dyn{x,u}}}
 \eqfinp
\end{equation*}
Now, the average Bellman operator $\aB{}{}$ is the mean of all the pointwise Bellman operators with respect to the probability law of
$\mathbf{W_{t+1}}$. That is, for every $\phi : \X \to \Rb$, we have that
\begin{equation*}
 \forall x \in \X
 \eqsepv \aB{\func}{x} = \E \bc{ \pB[\mathbf{W_{t+1}}]{\func}{x}} =
 \E\Bc{ \min_{u\in \U}\Bp{\cost[\mathbf{W_{t+1}}]{x,u} + \phi\bp{\dyn[\mathbf{W_{t+1}}]{x,u} }}}
 \eqfinp
\end{equation*}
The average Bellman operator can be seen as a one stage operator which computes
the value of applying the best (average) control at a given state $x$. Note
that in the hazard-decision framework assumed here, the control is taken after
observing the noise. Now, the Dynamic Programming approach states that in order to solve
MSP Problems~\eqref{MSP}, it suffices to solve the following system of
\emph{Bellman equations}~\eqref{BellmanEquations},
\begin{equation}
 \label{BellmanEquations}
 V_T = \psi \quad\text{and}\quad
 \forall t \in \ic{0,T-1}, V_t  = \aB{V_{t+1}}{}
 \eqfinp
\end{equation}
Solving the Bellman equations means computing recursively backward in time the
\emph{(Bellman) value functions} $V_t$. Finally, the value $V_0(x_0)$ is the solution of the multistage Problem~\ref{MSP}.

Grid-based approach to compute the value functions suffers from the so-called
curse of dimensionality. Assuming that the value functions
$\na{V_t}_{t\in \ic{0,T}}$ are convex, one approach to bypass this difficulty is
proposed by Pereira and Pinto \cite{Pe.Pi1991} with the Stochastic Dual Dynamic
Programming (SDDP) algorithm which computes piecewise affine approximations of
each value function $V_t$. At a given iteration $k\in \N^*$ of SDDP, for every
time step $t\in \ce{0,T}$, the value function $V_t$ is approximated by
$\lV_t^k = \max_{\func \in \lFunc_k} \func$ where $\lFunc_k$ is a finite set of
affine functions. Then, given a realization of the noise process
$\np{W_t}_{t\in \ce{1,T}}$, the decision maker computes an optimal trajectory
associated with the approximations $\np{\lV_t^k}_{t\in \ce{0,T}}$ and add a new
mapping, $\func_t^{k+1}$ (named cut) to the current collection $\lFunc_t^k$
which define $\lV_t^k$, that is
$\lFunc_t^{k+1} = \lFunc_t^k \cup \left\{ \func_t^{k+1} \right\}$. Although SDDP
does not involve discretization of the state space, one of its computational
bottleneck is the lack of efficient stopping criterion: SDDP easily builds lower
approximations of the value function but upper approximations are usually
computed through a costly Monte-Carlo scheme.

In order to build upper approximations of the value functions, Min-plus methods
were studied (\emph{e.g.} \cite{Mc2007,Qu2014}) for optimal control problems in
continuous time. When the value functions $\na{V_t}_{t\in \ic{0,T}}$ are convex
(or more generally, semiconcave), discrete time adaptations of Min-plus methods
build for each $t\in \ic{1,T}$ approximations of convex value function $V_t$ as
finite infima of convex quadratic forms. That is, at given iteration
$k\in \N$, we consider upper approximations defined as
$\uV_t^k = \min_{\func \in \uFunc_t^k} \func$, where
$\uFunc_t^k$ is a finite set of convex quadratic forms. Then, a sequence of
\emph{trial points} $\np{x_t^k}_{t\in \ce{0,T}}$ are drawn (\emph{e.g.}
uniformly on the unit sphere as in \cite{Qu2014}) and for every $t\in \ce{0,T{-}1}$
a new function $\func_t^{k+1}$ is added,
$\uFunc_t^{k+1} = \uFunc_t^k \cup \left\{ \func_t^{k+1}\right\}$. The function
$\func_t^{k+1}$ should be compatible with the Bellman equation, in particular it
should be \emph{tight}, \emph{i.e.} the Bellman equations should be satisfied at
the trial point,
\[
 \aB{\func_{t+1}^{k+1}}{x_t^k} = \func_t^{k+1}\np{x_t^k}.
\]
In~\cite{Ak.Ch.Tr2018}, the authors present a common framework for a
deterministic version of SDDP and a discrete time version of Min-plus
algorithms. Moreover, the authors give sufficient conditions on the way the trial
points have to be sampled in order to obtain asymptotic convergence of either
upper or lower approximations of the value functions. Under these conditions,
the main reason behind the convergence of these algorithm was shown to be that
the Bellman equations \eqref{BellmanEquations} are asymptotically satisfied on
all cluster points of possible trial points. In this article, we would like to
extend the work of \cite{Ak.Ch.Tr2018} by introducing a new algorithm called
Tropical Dynamic Programming (TDP).

In \cite{Ba.Do.Za2018,Ph.de.Fi2013}, is studied approximation schemes where lower
approximations are given as a suprema of affine functions and upper
approximations are given as a polyhedral function. We aim in this article to
extend, with TDP, the approach of \cite{Ba.Do.Za2018,Ph.de.Fi2013} considering more
generally that lower approximations are max-plus linear combinations of some
\emph{basic functions} and upper approximations are min-plus linear combinations
of other \emph{basic functions} where basic functions are defined later. TDP can
be seen as a tropical variant of parametric approximations used in Adaptive
Dynamic Programming (see \cite{Be2019,Po2011}) where the value functions are
approximated by linear combinations of basis functions. In this article, we
will:
\begin{enumerate}
 \item Extend the deterministic framework of \cite{Ak.Ch.Tr2018} to Lipschitz
       MSP defined in \Cref{MSP} and introduce TDP, see Section~\ref{sec:TDP}.
 \item Ensure that upper and lower approximations converge to the true value
       functions on a common set of points, see Section~\ref{sec:convergence}. The main
       result of Section~\ref{sec:convergence} generalizes to any min-plus/max-plus
       approximation scheme the result of \cite{Ba.Do.Za2018} which was stated for a
       variant of SDDP.
 \item Explicitly give several numerically efficient ways to build upper and
       lower approximations of the value functions, as min-plus and max-plus linear
       combinations of some simple functions, see Section~\ref{sec:numerique}.
\end{enumerate}

\section{Tropical Dynamical Programming on Lipschitz MSP}
\label{sec:TDP}

\subsection{Lipschitz MSP with independent finite noises}

For every time step $t\in \ce{1,T}$, we denote by $\supp{\mathbf{W_{t}}}$ the
\emph{support} of the discrete random variable $\mathbf{W}_{t}$ \footnote{The
 support of the discrete random variable $\mathbf{W}_t$ is equal to
 the set $\left\{ w\in \mathbb{W}_t \mid \Pro\np{\mathbf{W_t}= w} > 0 \right\}$.}  and
for a given subset $X \subset \X$, we denote by $\pi_X$ the euclidean projector
on $X$. State and control constraints for each time $t$ are modeled in the cost
functions which may possibly take infinite values outside of some given
sets. Now, we introduce a sequence of sets
$\na{X_t}_{t \in \ic{0,T}}$ which only depend on the problem data and make the following compactness assumption:
\begin{assumption}[Compact state space]
 \label{compactStates}
 For every time $t \in \ce{0,T}$, we assume that the set $X_t$ is a nonempty compact set in $\X$ where
 the sequence of sets $\na{X_t}_{t \in \ic{0,T}}$ is defined, for all $t\in \ce{0,T-1}$, by
 \begin{equation}
  \label{Xt}
  X_t := \bigcap_{w\in \supp{\mathbf{W_{t+1}}}}  \pi_{\X}\np{ \dom\, \cost{}{} }
  \eqfinv
 \end{equation}
 and for $t=T$ by $X_T = \dom \psi$.
\end{assumption}

For each noise $w\in \supp{\mathbf{W_{t+1}}}$, $t\in \ce{0,T-1}$, we also introduce the \emph{constraint set-valued mapping} $\cU : \X \rightrightarrows \U$ defined for every $x \in \X$ by
\begin{equation}
 \label{constraintset-valued mapping}
 \cU\np{x} := \bset{ u \in \U}{ \cost{x,u} < +\infty \ \text{and} \ \dyn{x,u} \in X_{t+1}}
 \eqfinp
\end{equation}

We will assume that the data of Problem~\eqref{MSP} is Lipschitz in the sense defined below. Let us stress that we do not assume structure on the dynamics or costs like linearity or convexity, only that they are Lipschitz.

\begin{assumption}[Lipschitz MSP]
 \label{LipschitzMSP}
 For every time $t\in \ce{0,T-1}$, we assume that for each
 $w\in \supp{\mathbf{W_{t+1}}}$, the dynamic $\dyn{}{}$, the cost $\cost{}{}$
 are Lipschitz continuous on $\dom\, \cost{}{}$ and the set-valued mapping
 constraint $\cU$ is Lipschitz continuous on $X_t$, \emph{i.e.} for some
 constant $L_{\mathcal{U}_t^w} >0$, for every $x_1, x_2 \in X_t$, we have
 \begin{equation}
  d_{\mathcal{H}}\bp{\cU\np{x_1}, \cU\np{x_2}} \leq L_{\cU} \lVert x_1 - x_2 \rVert.\footnotemark
 \end{equation}
\end{assumption}
Computing a (sharp) Lipschitz constant for the set-valued mapping
$\cU : \X \rightrightarrows \U$ is difficult. However, when the graph of the
set-valued mapping $\cU$ is polyhedral, as in the linear-polyhedral framework
studied in \Cref{sec:numerique}, one can compute a Lipschitz constant for $\cU$.
We make the following assumption in order to ensure that the domains of the
value functions $V_t$ are chosen by the decision maker. It can be seen as a
recourse assumption.  \footnotetext{The Hausdorff distance $d_{\mathcal{H}}$
 between two nonempty compact sets $X_1, X_2$ in $\X$ is defined by
 \[
  d_{\mathcal{H}}\np{X_1, X_2} = \max\np{ \max_{x_1 \in X_1} d\np{x_1, X_2}, \max_{x_2\in X_2} d\np{X_1,x_2}} = \max\np{\max_{x_1 \in X_1} \min_{x_2\in X_2} d\np{x_1,x_2}, \max_{x_2\in X_2} \min_{x_1\in X_1} d\np{x_1,x_2}}.
 \]}

\begin{assumption}[Recourse assumption]
 \label{recourse}
 Given $t\in \ce{0,T{-}1}$, for every noise realization
 $w \in \supp{\mathbf{W_t}}$ the set-valued mapping
 $\cU : \X \rightrightarrows \U$ defined in \eqref{constraintset-valued
  mapping} is nonempty compact valued.
\end{assumption}

\emph{A priori}, it might be difficult to compute the domain of each value
function $V_t$. However, under the recourse \Cref{recourse}, we have that
$\dom V_t := X_t$ and thus the domain of each value function is known to the
decision maker.

\begin{lemma}[Known domains of $V_t$]
 Under Assumptions~\ref{whitenoise} and~\ref{recourse}, for every
 $t\in \ce{0,T}$, the domain of $V_t$ is equal to $X_t$.
\end{lemma}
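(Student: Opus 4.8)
The plan is to argue by backward induction on $t \in \ce{0,T}$, using the Bellman equations \eqref{BellmanEquations} together with the finiteness of the noise supports. For the base case $t=T$ one simply notes that $V_T = \psi$, so $\dom V_T = \dom \psi = X_T$ by the definition of $X_T$ in \Cref{compactStates}. For the inductive step I would fix $t \in \ce{0,T-1}$, assume $\dom V_{t+1} = X_{t+1}$, and exploit \Cref{whitenoise}: since $\mathbf{W_{t+1}}$ has finite support, the average Bellman operator is a convex combination with strictly positive weights,
\[
V_t\np{x} = \aB{V_{t+1}}{x} = \sum_{w\in\supp{\mathbf{W_{t+1}}}} \Pro\np{\mathbf{W_{t+1}}=w}\,\pB[w]{V_{t+1}}{x}\eqfinp
\]
Provided the value functions stay above $-\infty$ (see below), this reduces the whole statement to showing that, for every $x$, the pointwise quantity $\pB[w]{V_{t+1}}{x}$ is finite for all $w\in\supp{\mathbf{W_{t+1}}}$ if and only if $x\in X_t$.

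The two inclusions are then short. For $X_t\subseteq\dom V_t$, I would take $x\in X_t$ and, for each noise $w$, invoke the recourse \Cref{recourse} to pick some $u\in\cU\np{x}$; by definition of the constraint set-valued mapping this $u$ satisfies $\cost{x,u}<+\infty$ and $\dyn{x,u}\in X_{t+1}=\dom V_{t+1}$, so the induction hypothesis gives $\pB[w]{V_{t+1}}{x}\leq \cost{x,u}+V_{t+1}\bp{\dyn{x,u}}<+\infty$. For $\dom V_t\subseteq X_t$, I would take $x\notin X_t$ and use the definition \eqref{Xt}: some noise $w$ then has $x\notin \pi_{\X}\np{\dom\,\cost{}{}}$, i.e.\ $\cost{x,u}=+\infty$ for every $u\in\U$, forcing $\pB[w]{V_{t+1}}{x}=+\infty$; as the weight of $w$ is strictly positive (and the remaining terms are $>-\infty$), $V_t\np{x}=+\infty$.

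The step I expect to be the only delicate one is justifying that the value functions never take the value $-\infty$, which is what legitimates both the convex-combination reduction (no cancellation of an infinite term) and the reading of ``$\min$'' as an attained minimum. Here I would lean on the recourse assumption a second time: for $x\in X_t$ the infimum defining $\pB[w]{V_{t+1}}{x}$ is effectively taken over the nonempty compact set $\cU\np{x}$, on which $\cost{}{}$ is Lipschitz by \Cref{LipschitzMSP} and $V_{t+1}$ is finite by the induction hypothesis; a continuity and compactness argument then yields an attained, finite minimum, and the lower bound propagated through the induction rules out the value $-\infty$. Everything else is bookkeeping over the finitely many noises, and assembling the two inclusions closes the induction.
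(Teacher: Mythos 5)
Your proof is correct and follows essentially the same route as the paper's: backward induction from $V_T=\psi$, using the recourse assumption to produce an admissible control showing $V_t\np{x}<+\infty$ for $x\in X_t$, and the definition~\eqref{Xt} of $X_t$ to force $\cost{x,u}=+\infty$ for some noise $w$ and hence $V_t\np{x}=+\infty$ off $X_t$. The only (cosmetic) differences are that you bound the infimum by an arbitrary $u\in\cU\np{x}$ where the paper invokes an attained minimizer via continuity and compactness, and that you explicitly flag the need to rule out the value $-\infty$, a point the paper's proof leaves implicit.
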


\begin{proof}
 We make the proof by backward induction on time.  At time $t=T$, we have
 $V_T = \psi$ and thus $\dom V_T = \dom \psi = X_T$. Now, for a given
 $t\in \ce{0,T{-}1}$, we assume that $\dom V_{t+1} = X_{t+1}$ and we
 prove that $\dom V_t = X_t$.

 First, fix $x \in X_t$. Then, for every $w\in \supp{\mathbf{W_{t+1}}}$,
 using~\Cref{recourse}, $\cU\np{x}$ is nonempty and thus $V_t\np{x} <
  +\infty$. Moreover, by Assumptions~\ref{LipschitzMSP} and
 Assumptions~\ref{recourse} the optimization problem
 \[
  \min_{u\in \U}\Bp{\cost{x,u} + V_{t+1}\bp{\dyn{x,u}}} = \min_{u\in \cU\np{x}} \Bp{\cost{x,u} + V_{t+1}\bp{\dyn{x,u}}},
 \]
 consists in the minimization of a continuous function in $u$ over a nonempty compact set. Denote by $u^w \in \cU\np{x}$ a minimizer of this optimization problem. We have, denoting by
 $\na{p_w}_{w\in \supp{\mathbf{W_{t+1}}}}$ the discrete probability law of the random variable
 $\mathbf{W_{t+1}}$, that
 \begin{align}
  V_t\np{x}
   & = \aB{V_{t+1}}{x}                \notag
  \\
   & = \E\bc{ \pB[\mathbf{W_{t+1}}]{V_{t+1}}{x}  \notag }
  \\
   & = \sum_{w\in \supp{\mathbf{W_{t+1}}}} p_w
  \inf_{u\in \U} \Bp{\cost{x,u} + V_{t+1}\bp{\dyn{x,u}}} \notag
  \\
   & = \sum_{w\in \supp{\mathbf{W_{t+1}}}} p_w \Bp{\cost{x,u^w} + V_{t+1}\bp{\dyn{x,u^w}}}
  \notag \eqfinp
 \end{align}
 As every term in the right hand side of the previous equation is finite, we
 have $V_t\np{x} < + \infty$ and thus $x\in \dom V_t$.

 Second, fix $x\notin X_t$. Then, there exists an element
 $w \in \supp{\mathbf{W_{t+1}}}$ such that $\cost{x,u} = +\infty$ for every
 control $u \in \U$.  We therefore have that $V_t\np{x} = +\infty$ and
 $x \not\in  \dom V_t$.

 We conclude that $\dom V_t = X_t$ which ends the proof.
\end{proof}

In \Cref{sec:numerique}, it will be crucial for numerical efficiency to have a
good estimation of the Lipschitz constant of the function
$\aB{V_{t+1}^k}{}$.

We now prove that under Assumptions~\ref{LipschitzMSP} and
Assumptions~\ref{recourse}, the operators $\aB{}{}$ preserve Lipschitz
regularity. Given a $L_{t+1}$-Lipschitz function $\func$ and
$w\in \supp{\mathbf{W_{t+1}}}$, in order to compute a Lipschitz constant of the
function $\pB{\func}{\cdot}$ we exploit the fact that the set-valued constraint
mapping $\cU$ and the data of Problem \ref{MSP} are Lipschitz in the sense of
Assumptions~\ref{LipschitzMSP}.  This was mostly already done in
\cite{Ak.Ch.Tr2018}, but for the sake of completeness, we will slightly adapt
its statement and proof.

\begin{proposition}[$\aB{}{}$ is Lipschitz regular]
 \label{lipschitz_regularity}
 Let $\func : \X \to \Rb$ be given. Under Assumptions~\ref{whitenoise} to
 \ref{recourse}, if for some $L_{t+1}> 0$, $\func$ is $L_{t+1}$-Lipschitz on
 $X_{t+1}$, then the function $\aB{\func}{}$ is $L_t$-Lipschitz on $X_t$ for
 some constant $L_t > 0$ which only depends on the data of Problem~\ref{MSP} and
 $L_{t+1}$.
\end{proposition}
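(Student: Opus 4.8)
The plan is to reduce the claim to the pointwise Bellman operators and then to exploit the Hausdorff-Lipschitz continuity of the constraint mapping $\cU$ to transport optimal controls. Since $\aB{\func}{} = \sum_{w \in \supp{\mathbf{W_{t+1}}}} p_w\, \pB{\func}{}$ is a finite convex combination of the pointwise operators $\pB{\func}{}$, it suffices to exhibit, for each fixed $w \in \supp{\mathbf{W_{t+1}}}$, a constant $L_t^w > 0$ such that $x \mapsto \pB{\func}{x}$ is $L_t^w$-Lipschitz on $X_t$; I would then take $L_t := \sum_{w \in \supp{\mathbf{W_{t+1}}}} p_w L_t^w$, which by construction depends only on the data of Problem~\ref{MSP} and on $L_{t+1}$.

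Fix such a $w$ and two points $x_1, x_2 \in X_t$. First, by the recourse \Cref{recourse} the set $\cU\np{x_1}$ is nonempty and compact, so---exactly as in the proof of the preceding lemma, using continuity of $\cost{}{}$ and of $\func \circ \dyn{}{}$---the infimum defining $\pB{\func}{x_1} = \min_{u \in \cU\np{x_1}} \bp{\cost{x_1, u} + \func\bp{\dyn{x_1, u}}}$ is attained at some $u_1 \in \cU\np{x_1}$. Next I would transport this control: since $\cU$ is $L_{\cU}$-Lipschitz for the Hausdorff distance on $X_t$ (\Cref{LipschitzMSP}) and $\cU\np{x_2}$ is compact, I can choose $u_2 \in \cU\np{x_2}$ realizing the distance $d\np{u_1, \cU\np{x_2}}$, so that $\norm{u_1 - u_2} \leq d_{\mathcal{H}}\bp{\cU\np{x_1}, \cU\np{x_2}} \leq L_{\cU}\norm{x_1 - x_2}$. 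The crucial feature is that the membership $u_2 \in \cU\np{x_2}$ is precisely what guarantees $\np{x_2, u_2} \in \dom\, \cost{}{}$ and $\dyn{x_2, u_2} \in X_{t+1}$, i.e. it places the transported control in the regions where the Lipschitz estimates for $\cost{}{}$, $\dyn{}{}$ and $\func$ are available.

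Using the feasibility of $u_2$ at $x_2$, I would write $\pB{\func}{x_2} \leq \cost{x_2, u_2} + \func\bp{\dyn{x_2, u_2}}$ and hence bound $\pB{\func}{x_2} - \pB{\func}{x_1} \leq \bc{\cost{x_2, u_2} - \cost{x_1, u_1}} + \bc{\func\bp{\dyn{x_2, u_2}} - \func\bp{\dyn{x_1, u_1}}}$. The Lipschitz continuity of $\cost{}{}$ controls the first bracket by $L_{\cost{}{}}\bp{\norm{x_1 - x_2} + \norm{u_1 - u_2}}$, while the $L_{t+1}$-Lipschitz continuity of $\func$ on $X_{t+1}$ together with the Lipschitz continuity of $\dyn{}{}$ controls the second by $L_{t+1} L_{\dyn{}{}}\bp{\norm{x_1 - x_2} + \norm{u_1 - u_2}}$. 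Substituting $\norm{u_1 - u_2} \leq L_{\cU}\norm{x_1 - x_2}$ gives the one-sided bound $\pB{\func}{x_2} - \pB{\func}{x_1} \leq \bp{L_{\cost{}{}} + L_{t+1} L_{\dyn{}{}}}\bp{1 + L_{\cU}}\norm{x_1 - x_2}$. Exchanging the roles of $x_1$ and $x_2$ yields the matching estimate, so $\pB{\func}{}$ is $L_t^w$-Lipschitz with $L_t^w = \bp{L_{\cost{}{}} + L_{t+1} L_{\dyn{}{}}}\bp{1 + L_{\cU}}$, and averaging over $w$ concludes.

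The hard part will be the transport step: Lipschitz continuity of the cost and the dynamic alone does not prevent the optimal value from jumping as $x$ varies, because the admissible set $\cU\np{x}$ itself moves with $x$. The Hausdorff-Lipschitz hypothesis of \Cref{LipschitzMSP} is exactly what lets me carry a (near-)optimal control of $x_1$ to an admissible, nearby control of $x_2$. The rest is bookkeeping: one must check that the transported pair stays in the common domains of Lipschitz regularity---which the definition~\eqref{constraintset-valued mapping} of $\cU$ ensures---and fix a norm on $\X \times \U$, for instance $\norm{\np{x,u}} = \norm{x} + \norm{u}$ (equivalent to the euclidean one up to a constant absorbed into $L_t$), so that the termwise splitting above is immediate.
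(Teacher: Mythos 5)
Your proof is correct and follows essentially the same route as the paper's: reduce to each pointwise operator $\pB{}{}$, transport a (near-)optimal control between $\cU\np{x_1}$ and $\cU\np{x_2}$ via the Hausdorff--Lipschitz hypothesis, apply the Lipschitz estimates for $\cost{}{}$, $\dyn{}{}$ and $\func$, and average over $w$. The only differences are cosmetic (you transport the optimizer of $x_1$ to $x_2$ rather than the reverse, and your constant uses $L_{\cost{}{}} + L_{t+1}L_{\dyn{}{}}$ where the paper writes a $\max$).
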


\begin{proof}
 Let $\func : \X \to \Rb$ be a $L_{t+1}$-Lipschitz function on $X_{t+1}$. We
 will show that for each $w\in \supp{\mathbf{W_{t+1}}}$, the mapping
 $\pB{\func}{\cdot}$ is $L_w$-Lipschitz for some constant $L_w$ which only
 depends on the data of problem \eqref{MSP}. Fix
 $w \in \supp{\mathbf{W_{t+1}}}$ and $x_1, x_2 \in X_t$. Denote by $u_2^*$ an
 optimal control at $x_2$ and $w$, that is
 $u_2^* \in \argmin_{u \in \cU\np{x_2}}\Bp{ \cost{x_2,u} + \func\bp{\dyn{x_2,u}}}$,
 or equivalently, $u_2^*$ satisfies
 \begin{equation}
  \cost{x_2,u_2^*} + \func\bp{\dyn{x_2,u_2^*}} = \pB{\func}{x_2}.
  \label{eq:u2-optimal}
 \end{equation}
 Then, for every $u_1\in \cU(x_1)$ we successively have
 \begin{align*}
  \pB{\func}{x_1}
   & \le \cost{x_1,u_1} + \func\bp{\dyn{x_1,u_1}}
  \tag{as $u_1\in \cU(x_1)$ is admissible}
  \\
   & \leq \pB{\func}{x_2} +  \cost{x_1,u_1} + \func\bp{\dyn{x_1,u_1}}  - \pB{\func}{x_2}
  \\
   & = \pB{\func}{x_2}  + \bp{\cost{x_1,u_1} - \cost{x_2,u_2^*}} +
  \Bp{\func\bp{\dyn{x_1, u_1}} - \func\bp{\dyn{x_2, u_2^*}}}
  \tag{using~\eqref{eq:u2-optimal}}
  \\
   & \leq \pB{\func}{x_2} + L\bp{ \bnorm{x_1 - x_2} + \bnorm{u_1 - u_2^*}},
  \tag{by \Cref{LipschitzMSP}} \label{eq:lipschitz_regularity1}
 \end{align*}
 where $L = \max\np{L_{\cost{}{}}, L_{t+1}L_{\dyn{}{}}}$.
 Now, as the set-valued mapping $\cU$ is $L_{\cU}$-Lipschitz, there exists
 $\tilde{u}_1 \in \cU\np{x_1}$ such that
 \[
  \lVert \tilde{u}_1 - u_2^*\rVert \leq L_{\cU} \lVert x_1 - x_2 \rVert.
 \]
 Hence, setting $L_w := \max\np{L_{\cost{}{}}, L_{t+1}L_{f_t^w}} \np{1 + L_{\cU}}$,
 we obtain
 \[
  \pB{\func}{x_1} - \pB{\func}{x_2} \leq L_t \lVert x_1 - x_2 \rVert.
 \]
 Reverting the role of $x_1$ and $x_2$ we get the converse inequality.  Hence,
 we have shown that, for every $w \in \supp{\mathbf{W_{t+1}}}$, the mapping
 $\pB{\func}{}$ is $L_w$-Lipschitz. Thus, setting $L_t = \np{\sum_{w}p_w L_w}$,
 we have
 \begin{align*}
  \big\lvert  \aB{\func}{x_1} - \aB{\func}{x_2} \big\rvert
   & \leq \sum_{w\in \supp{\mathbf{W_{t+1}}}}
  p_w \big\lvert \pB{\func}{x_1} - \pB{\func}{x_2} \big\rvert
  \\
   & \leq \Bp{\sum_{w\in \supp{\mathbf{W_{t+1}}}} p_w L_w }
  \norm{x_1 - x_2}\eqfinv
 \end{align*}
 as $\pB{\func}{}$ is $L_w$-Lipschitz.
 We obtain that the mapping $\aB{\func}{}$ is $L_t$-Lipschitz continuous
 on $\dom V_t$ and this concludes the proof.
\end{proof}

The explicit constant $L_t$ computed in the proof of
Proposition~\ref{lipschitz_regularity} does not exploit any possible structure
of the data, \emph{e.g.} linearity. In the presence of such structure or
possible decomposition, it is possible to greatly reduce the value of the $L_t$
constant. However, in the sequel, we only care for the regularity result given
in Proposition~\ref{lipschitz_regularity} and computing sharper bounds under
some specific structure is left for future works.

Using the fact that the final cost function $\psi = V_T$ is Lipschitz on $X_T$, by successive applications of
Proposition~\ref{lipschitz_regularity}, one gets the following corollary.

\begin{corollary}[The value functions of a Lipschitz MSP are Lipschitz continuous]
 \label{Lipschitz_Vt}
 For every time step $t\in \ce{0,T}$, the value function $V_t$ is
 $L_{V_t}$-Lipschitz continuous on $X_t$ where $L_{V_t} > 0$ is a constant which
 only depends on the data of Problem~\ref{MSP}.
\end{corollary}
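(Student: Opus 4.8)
The plan is to argue by backward induction on the time index $t \in \ce{0,T}$, bootstrapping \Cref{lipschitz_regularity} along the Bellman equations~\eqref{BellmanEquations}. The induction hypothesis I would carry is that $V_t$ is Lipschitz on $X_t$ with a constant $L_{V_t}$ that depends only on the data of Problem~\eqref{MSP}.

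For the base case $t = T$, I would use that $V_T = \psi$ together with the fact that, by \Cref{LipschitzMSP}, the final cost $\psi$ is Lipschitz continuous on $X_T = \dom \psi$; taking $L_{V_T}$ to be a Lipschitz constant of $\psi$ establishes the hypothesis at the final stage. For the inductive step, fix $t \in \ce{0,T-1}$ and suppose $V_{t+1}$ is $L_{V_{t+1}}$-Lipschitz on $X_{t+1}$ with $L_{V_{t+1}}$ depending only on the data. Since the Bellman equations~\eqref{BellmanEquations} give $V_t = \aB{V_{t+1}}{}$, I would apply \Cref{lipschitz_regularity} with $\func = V_{t+1}$ to conclude that $V_t$ is $L_t$-Lipschitz on $X_t$ for some $L_t$ depending only on the data and on $L_{V_{t+1}}$, and then set $L_{V_t} := L_t$.

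The one point to check carefully is the propagation of the dependence claim: \Cref{lipschitz_regularity} only guarantees that $L_t$ depends on the data and on the incoming constant $L_{V_{t+1}}$, so the conclusion that $L_{V_t}$ depends \emph{only} on the data relies on the induction hypothesis that $L_{V_{t+1}}$ already has this property. This bookkeeping is what makes the induction close, rather than any genuine analytic difficulty, since the substantive work—that the average Bellman operator sends a Lipschitz function to a Lipschitz function with a controlled constant—has already been carried out in \Cref{lipschitz_regularity}. I do not expect a real obstacle here; the corollary is essentially an unrolling of the proposition from $t = T$ down to $t = 0$.
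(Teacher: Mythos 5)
Your proposal is correct and matches the paper's argument exactly: the paper obtains \Cref{Lipschitz_Vt} precisely by noting that $\psi = V_T$ is Lipschitz on $X_T$ and then applying \Cref{lipschitz_regularity} successively backward in time through the Bellman equations~\eqref{BellmanEquations}. Your extra remark about the bookkeeping of the constant's dependence on the data is a fair and accurate observation, but raises no real issue.
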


\subsection{Tight and valid selection functions}
We formally define now what we call \emph{basic functions}. In the sequel, the
notation in bold $\Funcb_t$ will stand for a set of basic functions and
$\Func_t$ will stand for a subset of $\Funcb_t$.
\begin{definition}[Basic functions]
 Given $t\in \ce{0,T}$, a \emph{basic function} $\func : \X \to \Rb$ is a
 $L_{V_t}$-Lipschitz continuous function on $X_t$, where the constant
 $L_{V_t}>0$ is defined in Corollary~\ref{Lipschitz_Vt}.
\end{definition}

In order to ensure the convergence of the scheme detailed in the introduction,
at each iteration of TDP algorithm a basic functions which is be \emph{tight}
and \emph{valid} in the sense below is added to the current sets of basic
functions.  The idea behind these assumptions is to ensure that the Bellman
equations \eqref{BellmanEquations} will gradually be satisfied: it is too
numerically hard to find functions satisfying the Bellman equations
\eqref{BellmanEquations}, however tightness and validity can be checked
efficiently and this will be enough to ensure asymptotic convergence of our TDP
algorithm.

There is a dissymmetry for the validity assumption which depends on whether the
decision maker wants to build upper or lower approximations of the value
functions. In \S\ref{subsec:TDP}, we will assume that the decision maker has, at
hist disposal, two sequences of selection functions
$\np{\uSelection[]{}}_{t\in \ce{0,T}}$ and
$\np{\lSelection[]{}}_{t\in \ce{0,T}}$. The former to select basic functions for the
upper approximations and the latter for the lower approximations of $V_t$. We write
$\ZSelection[]{}$ when designing either $\uSelection[]{}$ or $\lSelection[]{}$
and denote by $\uvopt_{\uFunc_t}$ (resp. $\lvopt_{\lFunc_t}$) the pointwise
infimum (resp. pointwise supremum) of basic functions in $\lFunc_t$ (resp. in
$\uFunc_t$) when approximating from above (resp. below) a maping $V_t$. The
Figure~\ref{fig:selection_u_sddp} illustrates the formal definition of selection
functions given below. Given a set $Z$, we denote by $\mathcal{P}\np{Z}$ its
\emph{power set}, \emph{i.e.} the set of all subsets included in $Z$.

\begin{definition}[Selection functions]
 \label{CompatibleSelection}
 Let a time step $t \in \ce{0,T-1}$ be fixed. A \emph{selection function} or
 simply \emph{selection function} is a mapping $\ZSelection[]{}$ from
 $\mathcal{P}\np{\Funcb_{t+1}}{\times}X_t$ to $\Funcb_t$ satisfying the
 following properties
 \begin{itemize}
  \item \textbf{Tightness}: for every set of basic functions
        $\Func_{t+1} \subset \Funcb_{t+1}$ and $x\in X_t$, the mappings
        $\ZSelection{\Func_{t+1}, x}$ and $\pB{V_{\Func_{t+1}}}{\cdot}$ coincide at
        point $x$, that is
        \[
         \ZSelection{\Func_{t+1}, x}\np{x} = \aB{\vopt_{\Func_{t+1}}}{x}.
        \]
  \item \textbf{Validity}: for every set of basic functions
        $\Func_{t+1} \subset \Funcb_{t+1}$ and for every $x\in X_t$ we have
        \begin{align}
         \uSelection{\Func_{t+1}, x} \geq \aB{\vopt_{\Func_{t+1}}}{\cdot}, \tag{when building upper approximations} \\
         \lSelection{\Func_{t+1}, x} \leq \aB{\vopt_{\Func_{t+1}}}{\cdot}. \tag{when building lower approximations}
        \end{align}
 \end{itemize}
 For $t= T$, we also say that $S_T: X_T \to \Funcb_T$ is a \emph{selection function}
 if the mapping $S_T$ is \emph{tight} and \emph{valid} with a modified definition of
 \emph{tight} and \emph{valid} defined now. The mapping $S_T$ is said to be
 \emph{valid} if, for every $x\in X_T$, the function $S_T\left( x \right)$
 remains above (resp. below) the value function at time $T$ when building upper
 approximations (resp.  lower approximations). The mapping $S_T$ is said to be
 \emph{tight} if it coincides with the value function at point $x$, that is for
 every $x\in X_T$ we have
 \[
  S_T\left( x\right)\left(x\right) = V_T\np{x}.
 \]
\end{definition}

\begin{remark}
 Note that the validity and tightness assumptions at time $t=T$ is stronger than at times $t<T$ as the final cost function is a known data, we are allowed to enforce conditions directly on the value function $V_T$ and not just the on the image of the current approximations at time $t+1$ as it is the case when $t<T$.
\end{remark}

\begin{figure}
 \centering
 \includegraphics[width=\linewidth]{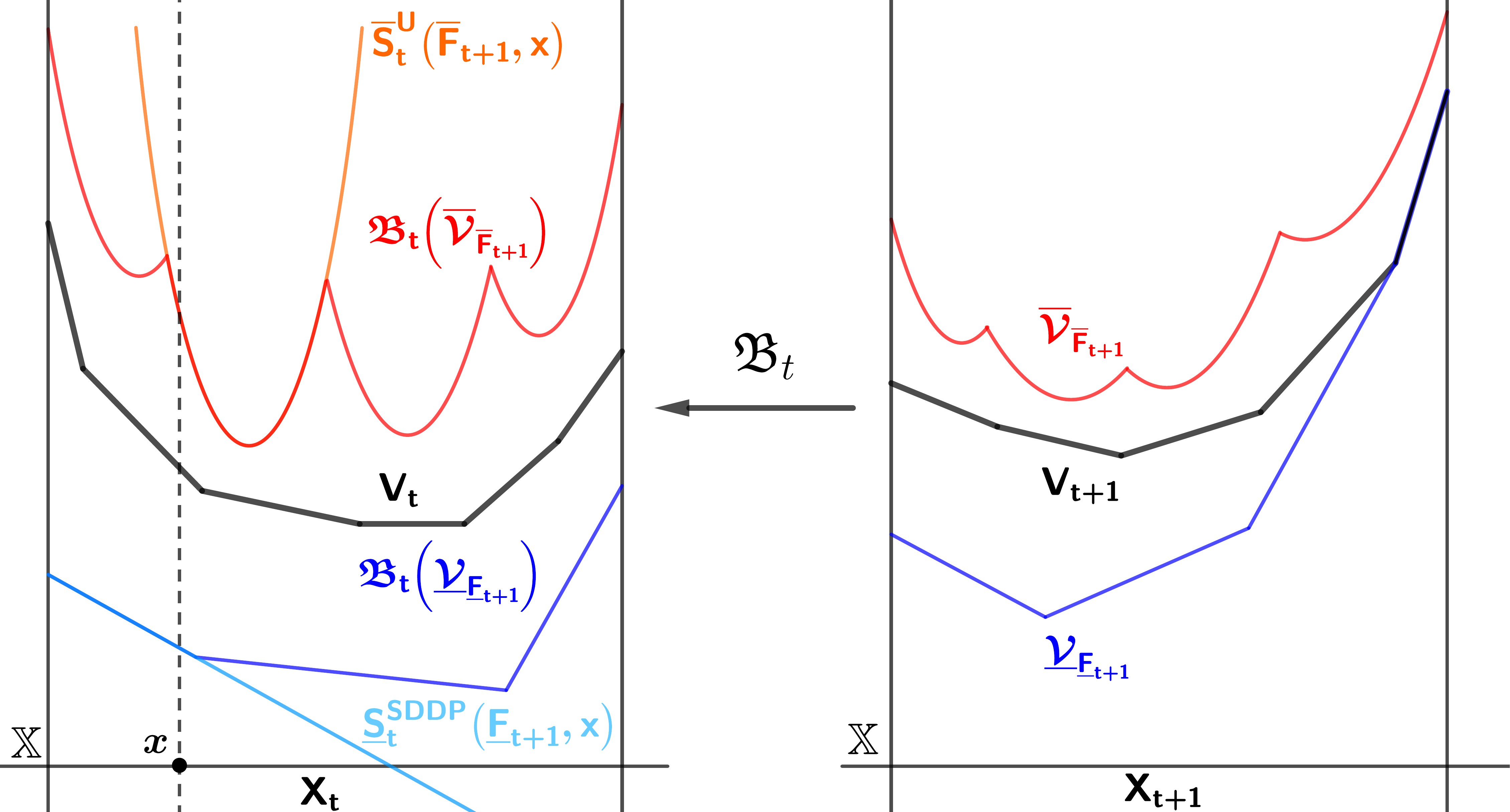}
 \caption{\label{fig:selection_u_sddp} Given a time step $t\in \ce{0,T-1}$, we illustrate the notions of tightness and validity of selection functions. A selection function takes as input a trial point $x$ in the domain $X_t$ of $V_t$ and  a set of basic functions $\Func_t \subset \Funcb_{t+1}$ building the approximations at the future time step $t+1$ (right: pointwise suprema or infima of the basic functions). Then, the Bellman operator $\mathfrak{B}_t$ translates one step backward in time the right picture to the picture on the left. \\
  Tightness of the selection function enforces that the output is a function equal to the Bellman image of the future approximation of $V_{t+1}$ at $x$;
  it is a local property. \\
  Validity enforces that the output of the selection function remains below, or above, the Bellman image the approximation of $V_{t+1}$ everywhere on the domain of $V_t$; it is a global property. More details on these examples of selection functions in Section~\ref{sec:numerique}.}
\end{figure}

\subsection{The problem-child trajectory}

From the previous section, given a set of basic functions and a point in $\X$, a
selection function is used to computes a new basic function. We explain in this section
the algorithm used to select the points which are used for searching new basic functions.

In this section we present how to build a trajectory of states, without
discretization of the whole state space. Selection functions for both upper and
lower approximations of $V_t$ will be evaluated along it. This trajectory of
states, coined \emph{problem-child} trajectory, was introduced by Baucke,
Downward and Zackeri in 2018 (see \cite{Ba.Do.Za2018}) for a variant of SDDP first
studied by Philpott, de Matos and Finardi in 2013 (see \cite{Ph.de.Fi2013}).

We present in Algorithm~\ref{PC} a generalized problem-child trajectory, it is
the sequence of states on which we evaluate selection functions.

\begin{algorithm}
 \caption{Problem-child trajectory}
 \label{PC}
 \begin{algorithmic}
  \REQUIRE{Two sequences of functions from $\X$ to $\Rb$,
   $\ufunc_0,\ldots, \ufunc_T$ and $\lfunc_0,\ldots, \lfunc_T$ with respective
   domains equal to $\dom V_t$.}  \ENSURE{A sequence of states
   $\np{x_0^*, \ldots, x_T^*}$.}  \STATE{Set $x_0^* := x_0$.}
  \FOR{$t \in \ce{0,T{-}1}$} \FOR{$w\in \supp{\mathbf{W_{t+1}}}$} \STATE{Compute
   an optimal control $u_t^w$ for $\lfunc_{t+1}$ at $x_t^*$ for the given $w$
   \begin{equation}
    \label{eq:pc_control}
    u_t^w \in \argmin_{u\in \U} \Bp{ \cost{x_t^*,u} + \lfunc_{t+1}\bp{\dyn{x_t^*,u}} }.
   \end{equation}}
  \ENDFOR
  \STATE{Compute ``the worst'' noise $w^*\in \supp{\mathbf{W_{t+1}}}$.
   \emph{i.e.} the one which maximizes the ``future'' gap
   \[
    w^* \in \argmax_{w \in \supp{\mathbf{W_{t+1}}}} \bp{ \ufunc_{t+1} -\lfunc_{t+1} }\bp{\dyn{x_t^*,u_t^w}}.
   \]}
  \STATE{Compute the next state dynamics for noise $w^*$ and associated optimal control
  $u_t^{w^*}$:
  $$x_{t+1}^* = \dyn[w^*]{x_t^*,u_t^{w^*}}\eqfinp$$}
  \ENDFOR
 \end{algorithmic}
\end{algorithm}

One can interpret the problem child trajectory as the worst (for the noises)
optimal trajectory (for the controls) of the lower approximations. It is worth
mentioning that the problem-child trajectory is deterministic. The approximations of
the value functions will be refined along the problem-child trajectory only,
thus avoiding a discretization of the state space. The main computational
drawback of such approach is the need to solve Problem \eqref{eq:pc_control}
$\lvert \supp{\mathbf{W_{1}}} \rvert \cdot \ldots \cdot \lvert
 \supp{\mathbf{W_{T}}} \rvert$ times. Except on special instances like the
linear-quadratic case, one cannot expect to find a closed form expression for
solutions of Equation~\eqref{eq:pc_control}. However,
we will see in \Cref{sec:numerique} examples
where Problem \eqref{eq:pc_control} can be solved by Linear Programming or
Quadratic Programming.  Simply put, if one can solve efficiently the
deterministic problem \eqref{eq:pc_control} and if at each time step the set
$\supp{\mathbf{W_{t}}}$ remains of small cardinality, then
using the problem-child trajectory and the Tropical Dynamical
Algorithm presented below in \Cref{subsec:TDP}, one can solve MSP problems with
finite independent noises efficiently. This might be an interesting framework in
practice if at each step the decision maker has a few different forecasts on
which her inputs are significantly different.

\subsection{Tropical Dynamic Programming}
\label{subsec:TDP}

\begin{algorithm}[H]
 \caption{Tropical Dynamic Programming (TDP)}
 \label{TDP}
 \begin{algorithmic}
  \REQUIRE{For every $t\in \ce{0,T}$, two compatible selection functions
   $\uSelection[]{}$ and $\lSelection[]{}$. A sequence of independent random
   variables $\np{\mathbf{W_t}}_{t\in \ce{0,T-1}}$, each with finite support.}
  \ENSURE{For every $t\in \ce{0,T}$, two sequence of sets
   $\np{\uFunc_t^k}_{k\in\N}$, $\np{\lFunc_t^k}_{k\in \N}$ and the associated
   functions $\uV_t^k = \inf_{\func \in \uFunc_t^k} \func$ and
   $\lV_t^k = \sup_{\func \in \lFunc_t^k} \func$.}
  \STATE{Define for every
   $t\in \ce{0,T}$, $\uFunc_t^0 := \emptyset$ and $\lFunc_t^0 := \emptyset$.}
  \FOR{$k\geq 0$}
  \STATE{\emph{Forward phase}}
  \STATE{Compute the problem-child
  trajectory $\np{x_t^k}_{t\in \ce{0,T}}$ for the sequences
  $\np{\uvopt_{\uFunc_t^k}}_{t\in \ce{0,T}}$ and
  $\np{\lvopt_{\lFunc_t^k}}_{t\in \ce{0,T}}$ using Algorithm~\ref{PC}.}
  \STATE{\emph{Backward phase}}
  \STATE{At $t = T$, compute new basic functions
  $\ufunc_T := \overline{S}_T \left( x_T^{k}\right)$ and
  $\lfunc_T := \underline{S}_T \np{ x_T^{k}}$.}
  \STATE{Add them to current collections,
   $\uFunc_T^{k+1} := \uFunc_T^{k} \cup \left\{ \ufunc_T \right\}$ and
   $\lFunc_T^{k+1} := \lFunc_T^{k} \cup \left\{ \lfunc_T \right\}$.}
  \FOR{$t$
   from $T{-}1$ to $0$}
  \STATE{Compute new basic functions:
  $\ufunc_t := \uSelection{\uFunc^{k+1}_{t+1}, x_t^{k}}$ and
  $\lfunc_t := \lSelection{\lFunc^{k+1}_{t+1}, x_t^{k}}$.}  \STATE{Add them
   to the current collections:
   $\uFunc_t^{k+1} := \uFunc_t^{k} \cup \left\{ \ufunc_t \right\}$ and
   $\lFunc_t^{k+1} := \lFunc_t^{k} \cup \left\{ \lfunc_t \right\}$.}  \ENDFOR
  \ENDFOR
 \end{algorithmic}
\end{algorithm}

\section{Asymptotic convergence of TDP along the problem-child trajectory}
\label{sec:convergence}

In this section, we will assume that Assumptions \eqref{whitenoise} to
\eqref{recourse} are satisfied. We recall that, under Assumption~\ref{recourse},
the sequence of sets $\na{X_t}_{t\in \ic{0,T}}$ defined in Equation~\eqref{Xt}
is known and for all $t\in \ce{0,T}$ the domain of $V_t$ is equal to $X_t$.
We denote by $\np{x_t^k}_{k\in \N}$ the sequence of trial points generated by TDP algorithm at time $t$ for
every $t\in \ce{0,T}$, and by $\np{u_t^k}_{k\in \N}$ and $\np{w_t^k}_{k\in \N}$
the optimal control and worst noises sequences associated for each time $t$
with $x_t^k$ in the problem-child trajectory in Algorithm~\ref{PC}.

Now, observe that for every $t\in \ce{0,T}$, the approximations of $V_t$ generated by TDP, $\np{\uV_t^k}_{k\in \N}$ and $\np{\lV_t^k}_{k \in \N}$, are respectively non increasing and non decreasing. Moreover, for every index $k\in \N$ we have
\[
 \lV_t^k \leq V_t \leq \uV_t^k.
\]
We refer to \cite[Lemma 7]{Ak.Ch.Tr2018} for a proof. Observing that the basic functions are all $L_{V_t}$-Lipschitz continuous on $X_t$ one can prove using Arzel\`a-Ascoli Theorem the following proposition.

\begin{proposition}[Existence of an approximating limit]
 \label{ExistenceLimits}
 Let $t\in \ce{0,T}$ be fixed, the sequences of
 functions $\left( \lV_t^k \right)_{k\in \N}$ and $\left( \uV_t^k \right)_{k\in \N}$ generated by Algorithm~\ref{TDP} converge
 uniformly on $X_t$ to two functions $\lV_t^*$ and $\uV_t^*$. Moreover, $\lV_t^*$ and $\uV_t^*$ are $L_{V_t}$-Lipschitz continuous on $X_t$ and satisfy
 \(
 \lV_t^* \leq V_t \leq \uV_t^* \;.
 \)
\end{proposition}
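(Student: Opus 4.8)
The plan is to leverage three ingredients already available: the monotonicity of the two sequences, the two-sided bound $\lV_t^k \le V_t \le \uV_t^k$, and the uniform $L_{V_t}$-Lipschitz regularity of every basic function on the compact set $X_t$. Pointwise convergence will come from the monotonicity, and the upgrade to uniform convergence will rely on the Arzel\`a--Ascoli theorem.

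First I would check that each iterate is itself $L_{V_t}$-Lipschitz on $X_t$. For $k\ge 1$ the collection $\lFunc_t^k$ is a nonempty finite set, so $\lV_t^k = \sup_{\func\in\lFunc_t^k}\func$ is a finite maximum of $L_{V_t}$-Lipschitz basic functions; since it is bounded above by $V_t$ it is finite everywhere on $X_t$, and a supremum of $L_{V_t}$-Lipschitz functions that is finite at one point is $L_{V_t}$-Lipschitz. Symmetrically $\uV_t^k = \inf_{\func\in\uFunc_t^k}\func$ is a finite minimum of $L_{V_t}$-Lipschitz functions, bounded below by $V_t$, hence $L_{V_t}$-Lipschitz.

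Second, since $(\uV_t^k)_{k\ge 1}$ is non-increasing and bounded below by $V_t$, it converges pointwise on $X_t$ to some function $\uV_t^*$; likewise $(\lV_t^k)_{k\ge 1}$ is non-decreasing and bounded above by $V_t$, converging pointwise to some $\lV_t^*$. Passing to the limit in $\lV_t^k \le V_t \le \uV_t^k$ yields the sandwich $\lV_t^* \le V_t \le \uV_t^*$. Passing to the limit in the inequality $\lvert \uV_t^k(x_1) - \uV_t^k(x_2)\rvert \le L_{V_t}\norm{x_1 - x_2}$, and in its analogue for $\lV_t^k$, shows that both limits are $L_{V_t}$-Lipschitz on $X_t$, in particular continuous.

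The main point, and the step I expect to require the most care, is upgrading pointwise to uniform convergence; here I would invoke Arzel\`a--Ascoli. The family $\{\uV_t^k\}_{k\ge 1}$ is equicontinuous (all $L_{V_t}$-Lipschitz) and uniformly bounded on the compact set $X_t$, being squeezed between the two continuous functions $V_t$ and $\uV_t^1$, hence relatively compact in $C(X_t)$ for the sup norm. If $(\uV_t^k)$ did not converge uniformly to $\uV_t^*$, some subsequence would remain at sup-distance at least $\varepsilon$ from $\uV_t^*$; by Arzel\`a--Ascoli it would admit a further subsequence converging uniformly to some $g$, which must coincide with the pointwise limit $\uV_t^*$, a contradiction. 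The same reasoning applies to $(\lV_t^k)$ (squeezed between $\lV_t^1$ and $V_t$). Alternatively, since the limits $\uV_t^*$ and $\lV_t^*$ are continuous and the convergence is monotone on the compact set $X_t$, Dini's theorem delivers uniform convergence directly.
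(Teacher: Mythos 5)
Your proposal is correct and follows essentially the route the paper itself indicates (and delegates to \cite{Ak.Ch.Tr2018}): monotonicity plus the sandwich $\lV_t^k \leq V_t \leq \uV_t^k$ give pointwise limits, the uniform $L_{V_t}$-Lipschitz bound on the basic functions gives equicontinuity, and Arzel\`a--Ascoli (or, as you note, Dini) upgrades the convergence to uniform on the compact set $X_t$. Your handling of the empty collections at $k=0$ and the passage of the Lipschitz and sandwich inequalities to the limit are all sound.
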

\begin{proof}
 Omitted as it is slight rewriting of \cite[Proposition 9]{Ak.Ch.Tr2018}.
\end{proof}

If we extract a converging subsequence of trial points, then using compactness,
extracting a subsubsequence if needed, one can find a find a subsequence of
trial points, and associated controls that jointly converge.
\begin{lemma}
 \label{lem:conv_commune}
 Fix $t \in \ce{0,T-1}$ and denote by $\np{x_t^k}_{k\in \N}$ the sequence of
 trial points generated by Algorithm~\ref{TDP} and by $\np{u_t^k}_{k\in \N}$ the
 sequence of associated optimal controls. There exists an increasing function
 $\sigma : \N \to \N$ and a state-control ordered pair
 $\np{x_t^*, u_t^*} \in X_t{\times}\U$ such that
 \begin{equation}
  \left\{
  \begin{aligned}
    & x_t^{\sigma(k)} \underset{k\to +\infty}{\longrightarrow} x_t^*, \\
    & u_t^{\sigma(k)} \underset{k\to +\infty}{\longrightarrow} u_t^*.
  \end{aligned}
  \right.
 \end{equation}
\end{lemma}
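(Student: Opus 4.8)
The plan is to extract a converging subsequence twice, using compactness of the relevant sets. First I would recall that the trial points $\np{x_t^k}_{k\in \N}$ all lie in the compact set $X_t$ (this is the content of \Cref{compactStates}, and the problem-child trajectory in Algorithm~\ref{PC} produces states $x_{t+1}^* = \dyn[w^*]{x_t^*, u_t^{w^*}}$ which lie in $X_{t+1}$ by the very definition of the constraint set-valued mapping $\cU$). Since $X_t$ is compact in $\X = \R^n$, by Bolzano--Weierstrass there exists an increasing extraction $\tau : \N \to \N$ and a point $x_t^* \in X_t$ with $x_t^{\tau(k)} \to x_t^*$ as $k \to +\infty$.

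Next I would control the associated controls. The control $u_t^k$ is, by construction in the forward phase, an optimal control for the lower approximation $\lvopt_{\lFunc_{t+1}^k}$ at $x_t^k$ for the selected worst noise $w_t^k$, hence $u_t^k \in \cU[w_t^k]\np{x_t^k}$. Here I would invoke \Cref{recourse}, which guarantees that $\cU\np{x}$ is nonempty and \emph{compact} valued. Since $\supp{\mathbf{W_{t+1}}}$ is finite (\Cref{whitenoise}), the worst-noise sequence $\np{w_t^k}_k$ takes finitely many values, so along a further extraction it is constant equal to some fixed $w \in \supp{\mathbf{W_{t+1}}}$. The more delicate point is that the controls $u_t^{\tau(k)}$ live in the \emph{moving} compact sets $\cU\np{x_t^{\tau(k)}}$; I would handle this by observing that since $\cU$ is $L_{\cU}$-Lipschitz for the Hausdorff distance (\Cref{LipschitzMSP}) and $x_t^{\tau(k)} \to x_t^*$, all these sets are eventually contained in a fixed bounded neighborhood of the compact set $\cU\np{x_t^*}$, hence the sequence $\np{u_t^{\tau(k)}}_k$ is bounded. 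Applying Bolzano--Weierstrass once more gives a sub-extraction along which $u_t^{\tau(k)} \to u_t^*$ for some $u_t^* \in \U$. Composing the two extractions yields the single increasing map $\extractrice := \tau \circ (\text{further extraction})$ claimed in the statement.

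The step I expect to require the most care is verifying that the limit control $u_t^*$ is genuinely admissible, that is $u_t^* \in \cU\np{x_t^*}$, although the statement as written only asserts joint convergence in $X_t \times \U$ and does not explicitly demand admissibility of the limit. If admissibility is wanted, I would argue that the graph of $\cU$ is closed: this follows from the Lipschitz continuity of $\cU$ (which in particular makes it upper semicontinuous with closed values on the compact $X_t$), so that $u_t^{\tau(k)} \in \cU\np{x_t^{\tau(k)}}$ together with $\np{x_t^{\tau(k)}, u_t^{\tau(k)}} \to \np{x_t^*, u_t^*}$ forces $u_t^* \in \cU\np{x_t^*}$. Since the present statement restricts only to joint convergence, the core of the argument is really the two successive compactness extractions and the uniform boundedness of the controls obtained from the Lipschitz property of $\cU$; the closed-graph remark is then available for later use when passing to the limit in the Bellman equations.
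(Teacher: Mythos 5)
Your proof is correct and structurally the same as the paper's: both arguments reduce the lemma to showing that the state--control pairs are confined to a compact subset of $\X{\times}\U$ and then extract a convergent subsequence. The one genuine difference is how the controls are bounded. The paper does this once and for all by invoking \cite[Proposition 11, p.~112]{Au.Ek1984}: since $\cU$ is upper semicontinuous (a consequence of \Cref{LipschitzMSP}) and nonempty compact valued (\Cref{recourse}), its image $\cU\np{X_t}$ of the compact $X_t$ is compact, and the finite union of these images over $w\in\supp{\mathbf{W_{t+1}}}$ is a single fixed compact set $U_t$ containing every $u_t^k$; one joint extraction in $X_t{\times}U_t$ then concludes. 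You instead extract first along the states and then use the Hausdorff--Lipschitz estimate of \Cref{LipschitzMSP} directly to trap the moving sets $\cU\np{x_t^{\tau(k)}}$ in a bounded neighbourhood of the compact set $\cU\np{x_t^*}$, before extracting a second time. Your route is slightly more elementary (it avoids the cited set-valued analysis result) at the cost of an extra extraction and of the harmless, strictly unnecessary step of freezing the worst noise along a subsequence --- the paper sidesteps that by taking the union over the finitely many noise values up front. Your closing observation that the limit control is admissible because the graph of $\cU$ is closed is correct, and you rightly note it is not required by the statement itself.
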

\begin{proof}
 Fix a time step $t\in \ce{0,T{-}1}$. First, by construction of the problem-child
 trajectories, the sequence $\np{x_t^k}_{k\in \N}$ remains in the subset
 $X_t$ that is $x_t^k \in X_t$ for all $k\in \NN$.

 Second, we show that the sequence of controls $\np{u_t^k}_{k\in \N}$ is
 included in a compact subset of $\U$.  Under Assumption~\ref{compactStates},
 $X_t$ is a nonempty compact subset of $\X$. For every
 $w\in \supp{\mathbf{W_{t+1}}}$ the set-valued mapping $\cU$ is Lipschitz
 continuous on $X_t$ under Assumption~\ref{LipschitzMSP}, hence upper
 semicontinuous on $X_t$.\footnote{The compact valued set-valued mapping
  $\cU : \X \rightrightarrows \U$ is \emph{upper semicontinuous} on $X_t$ if,
  for all $x_t \in X_t$, if an open set $U \subset \U$ contains $\cU\np{x_t}$
  then $\left\{ x \in \X \mid \cU{x} \subset U \right\}$ contains a
  neighborhood of $x_t$.}  Moreover, under recourse Assumption~\ref{recourse},
 $\cU$ is nonempty compact valued. Thus, by \cite[Proposition 11
  p.112]{Au.Ek1984}, its image $\cU\np{X_t}$ of the compact $X_t$ is a nonempty
 compact subset of $\U$. Finally as the random variable $\mathbf{W_{t+1}}$ has
 a finite support under Assumption~\ref{whitenoise}, the set
 $U_t := \cup_{w \in \supp{\mathbf{W_{t+1}}}} \, \cU\np{X_t}$ is a compact
 subset of $\U$. The sequence $\np{u_t^k}_{k\in \N}$ remains in $U_t$ and
 therefore we conclude that it remains in a compact subset of $\U$.

 Finally, as the sequence $\np{x_t^k, u_t^k}_{k\in \N}$ is included in the compact subset $X_t{\times}U_t$
 of $\X{\times}\U$, one can extract a converging subsequence, hence the result.
\end{proof}

Lastly, we will use the following elementary lemma, whose proof is omitted.
\begin{lemma}
 \label{lem:unif_conv}
 Let $\np{g^k}_{k\in \N}$ be a sequence of functions that converges uniformly on
 a compact $K$ to a function $g^*$. If $\np{y^k}_{k\in \N}$ is a sequence of
 points in $K$ that converges to $y^* \in K$ then one has
 \[
  g^k\np{y^k} \underset{k\rightarrow + \infty}{\longrightarrow} g^*\np{y^*}.
 \]
\end{lemma}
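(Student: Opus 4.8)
The plan is to control the quantity $\big\lvert g^k(y^k) - g^*(y^*)\big\rvert$ by inserting the intermediate value $g^*(y^k)$ and applying the triangle inequality:
\[
 \big\lvert g^k(y^k) - g^*(y^*)\big\rvert \le \big\lvert g^k(y^k) - g^*(y^k)\big\rvert + \big\lvert g^*(y^k) - g^*(y^*)\big\rvert.
\]
This splits the estimate into a term measuring how far $g^k$ sits from its uniform limit (evaluated at the moving point $y^k$) and a term measuring the oscillation of the single limit function $g^*$ between $y^k$ and $y^*$. I would then show each term tends to $0$ and conclude by a standard $\varepsilon/2$ argument.

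For the first term, I would bound it independently of the evaluation point by the uniform distance,
\[
 \big\lvert g^k(y^k) - g^*(y^k)\big\rvert \le \sup_{y\in K} \big\lvert g^k(y) - g^*(y)\big\rvert,
\]
which tends to $0$ as $k\to+\infty$ by the uniform convergence hypothesis. This is precisely where uniformity, rather than mere pointwise convergence, is essential: it lets me bound the error at the point $y^k$, even though $y^k$ itself depends on $k$.

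For the second term, I would invoke continuity of $g^*$ at $y^*$ together with $y^k \to y^*$ to obtain $\big\lvert g^*(y^k) - g^*(y^*)\big\rvert \to 0$. In the situations where this lemma is applied—the approximating functions $\uV_t^k, \lV_t^k$ and their uniform limits, all $L_{V_t}$-Lipschitz on the compact $X_t$ by Proposition~\ref{ExistenceLimits}—the limit $g^*$ is continuous; indeed a uniform limit of continuous functions on $K$ is continuous, so this step is immediate.

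The only point requiring care—the ``main obstacle,'' though a mild one—is recognizing that the argument needs the limit $g^*$ to be continuous at $y^*$. Pointwise convergence alone would not suffice: it could not control the first term at the moving point $y^k$, and a discontinuous $g^*$ would break the second term. Both difficulties are resolved by the uniform convergence hypothesis, which simultaneously supplies the uniform bound on the first term and guarantees the continuity of $g^*$ used for the second.
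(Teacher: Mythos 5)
Your proof is correct and is the standard $\varepsilon/2$ argument via the triangle inequality; the paper omits its own proof of this lemma, so there is no alternative route to compare against. Your one substantive observation is also right and worth keeping: as literally stated the lemma requires continuity of $g^*$ at $y^*$ (take $g^k=g^*$ discontinuous and $y^k\to y^*$ with $y^k\neq y^*$ for a counterexample), a hypothesis absent from the statement but automatically satisfied wherever the lemma is invoked, since the functions $\overline{V}_t^k-\underline{V}_t^k$ are uniformly $2L_{V_t}$-Lipschitz on $X_t$ and hence so is the uniform limit.
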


We now state the main result of this article. For a fixed $t\in \ce{0,T}$,
as the Bellman value function $V_t$ is always sandwiched between the sequences of
upper and lower approximations, if the gap between upper and lower approximations
vanishes at a given state value $x$, then upper and lower approximations will both
converge to $V_t(x)$.
Note that, even though a MSP is a stochastic optimization problem, the
convergence result below is not. Indeed, we have assumed (see \Cref{whitenoise}) that
the noises have finite supports, thus under careful selection of scenario as done
by the Problem-child trajectory, we get a ``sure'' convergence.

\begin{theorem}[Vanishing gap along problem-child trajectories]
 Denote by $\np{\overline{V}_t^k}_{k\in \N}$ and $\np{\underline{V}_t^k}_{k\in \N}$ the
 approximations generated by the Tropical Dynamic Programming algorithm. For
 every $k\in \N$ denote by $\np{x_t^k}_{0 \leq t \leq T}$ the current
 Problem-child trajectory.

 Then, under Assumptions~\ref{whitenoise} to~\ref{recourse},
 we have that
 \[
  \uV_t^k\np{x_t^k} - \lV_t^k\np{x_t^k} \underset{k\to +\infty}{\longrightarrow 0}
  \quad\text{and}\quad \overline{V}_t^*\np{x_t^*} = \underline{V}_t^*\np{x_t^*}
  \eqfinv
 \]
 for every accumulation point $x_t^*$ of the sequence $\np{x_t^k}_{k\in \N}$.
\end{theorem}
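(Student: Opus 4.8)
The plan is to prove the (equivalent) accumulation-point statement that $\uV_t^*\np{x_t^*} = \lV_t^*\np{x_t^*}$ at every accumulation point $x_t^*$ of $\np{x_t^k}_{k\in\N}$, where $\uV_t^*,\lV_t^*$ are the uniform limits from \Cref{ExistenceLimits}, and then to recover the vanishing-gap statement by a subsequence-contradiction argument. Indeed, writing $\delta_t^k := \uV_t^k\np{x_t^k} - \lV_t^k\np{x_t^k} \geq 0$, if $\delta_t^k \not\to 0$ I would extract a subsequence bounded below by some $\varepsilon>0$; since $\np{x_t^k}_{k\in\N}$ lives in the compact set $X_t$ I could extract a further subsequence converging to an accumulation point $x_t^*$, and \Cref{lem:unif_conv} applied to $\uV_t^k - \lV_t^k$ (which converges uniformly on $X_t$ by \Cref{ExistenceLimits}) would force $\delta_t^k \to \uV_t^*\np{x_t^*} - \lV_t^*\np{x_t^*} = 0$ along that subsequence, a contradiction.

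To establish the accumulation-point statement I fix such an $x_t^*$, choose a subsequence $\sigma$ with $x_t^{\sigma(k)} \to x_t^*$, and refine $\sigma$ once more (using compactness of each $X_s$ and finiteness of the horizon, i.e.\ the state part of \Cref{lem:conv_commune}) so that $x_s^{\sigma(k)}$ converges to some $x_s^*$ for every $s\in\ce{0,T}$ simultaneously. Along this common subsequence I set $g_s := \uV_s^*\np{x_s^*} - \lV_s^*\np{x_s^*} \geq 0$ and prove $g_s = 0$ by backward induction on $s$. The base case $g_T=0$ follows because the time-$T$ selection functions are tight and valid directly against $V_T$: for every $k$ one has $\lV_T^{k+1}\np{x_T^k} \geq V_T\np{x_T^k} \geq \uV_T^{k+1}\np{x_T^k}$, while always $\uV_T^{k+1} \geq \lV_T^{k+1}$, so the time-$T$ gap vanishes at $x_T^k$ and hence, in the limit via \Cref{lem:unif_conv}, $g_T = 0$.

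The heart of the induction is the one-step inequality
\begin{equation}
 \uV_t^{k+1}\np{x_t^k} - \lV_t^{k+1}\np{x_t^k} \;\leq\; \bp{\uV_{t+1}^k - \lV_{t+1}^k}\np{x_{t+1}^k},
 \label{eq:plan-key}
\end{equation}
which I would prove as follows. By tightness and validity of $\uSelection{\uFunc_{t+1}^{k+1}, x_t^k}$ and $\lSelection{\lFunc_{t+1}^{k+1}, x_t^k}$ added in the backward phase, the left-hand side is $\leq \aB{\uV_{t+1}^{k+1}}{x_t^k} - \aB{\lV_{t+1}^{k+1}}{x_t^k}$; by monotonicity of the scheme ($\uV_{t+1}^{k+1}\leq\uV_{t+1}^k$, $\lV_{t+1}^{k+1}\geq\lV_{t+1}^k$) and of $\aB{}{}$, this is in turn $\leq \aB{\uV_{t+1}^{k}}{x_t^k} - \aB{\lV_{t+1}^{k}}{x_t^k}$. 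Now for each noise $w$ the problem-child control $u_t^w$ of \Cref{PC} is optimal for $\lV_{t+1}^k$ at $x_t^k$, so evaluating the upper problem at this (suboptimal) control bounds the $w$-th difference by $\bp{\uV_{t+1}^k - \lV_{t+1}^k}\bp{\dyn{x_t^k, u_t^w}}$; averaging over $w$ and invoking that the worst-noise rule picks $w^*$ maximizing exactly this gap collapses the $p_w$-average into the single term $\bp{\uV_{t+1}^k - \lV_{t+1}^k}\np{x_{t+1}^k}$, giving \eqref{eq:plan-key}. Crucially the controls drop out of \eqref{eq:plan-key}, so only state convergence is needed to pass to the limit. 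Letting $k\to+\infty$ along the common subsequence, using uniform convergence $\uV_s^{j}\to \uV_s^*$, $\lV_s^{j}\to\lV_s^*$ on $X_s$ together with \Cref{lem:unif_conv} (noting that $\sigma(k)+1$ is again a valid subsequence index), yields $g_t \leq g_{t+1}$. Combined with $g_s \geq 0$ and $g_T=0$ this gives $g_s=0$ for all $s$, hence $g_t=0$, as desired.

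I expect the main obstacle to be the bookkeeping around the two interleaved indices $k$ and $k+1$: the problem-child trajectory and its controls are computed with the iteration-$k$ approximations, whereas the freshly added selection functions are tight and valid against the iteration-$(k+1)$ future approximations. Monotonicity of the scheme is precisely what reconciles these, so that the problem-child controls (optimal for $\lV_{t+1}^k$) may legitimately be used to bound the $(k+1)$-indexed Bellman gap and the worst-noise rule can collapse the expectation onto the next trajectory point; this reconciliation is the step that must be carried out carefully.
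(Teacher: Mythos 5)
Your proof is correct and follows essentially the same route as the paper: the same one-step inequality $\uV_t^{k+1}\np{x_t^k}-\lV_t^{k+1}\np{x_t^k}\leq\bp{\uV_{t+1}^{k}-\lV_{t+1}^{k}}\np{x_{t+1}^k}$ obtained from tightness, validity, monotonicity, optimality of the problem-child controls for $\lV_{t+1}^k$ and the worst-noise rule, followed by backward induction and passage to the limit via uniform convergence (Proposition~\ref{ExistenceLimits} and Lemma~\ref{lem:unif_conv}). Your only departures are organizational and both are sound: you extract a single common subsequence along which all states converge, correctly observing that the controls cancel out of the key inequality so the control part of Lemma~\ref{lem:conv_commune} is not needed, and you make explicit the compactness-contradiction step deducing the $k$-indexed vanishing-gap claim from the accumulation-point statement, which the paper leaves implicit.
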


\begin{proof}
 We prove by backward recursion that, for every $t\in \ce{0,T}$,
 for every accumulation point $x_t^*$ of the sequence $\np{x_t^k}_{k\in \N}$, we have
 \begin{equation}
  \label{ConvergingBonds}
  \overline{V}_t^*\np{x_t^*} = \underline{V}_t^*\np{x_t^*}.
 \end{equation}
 By a direct consequence of the tightness of the selection functions one has
 that for every $k\in \N$,
 $\overline{V}_T^k\np{x_T^k} = V_T\np{x_T^k} =
  \underline{V}_T^k\np{x_T^k}$. Thus, the equality~\eqref{ConvergingBonds} holds
 for $t=T$ by Lemma~\ref{lem:unif_conv}.

 Now assume that for some $t\in \ce{0,T{-}1}$, for every accumulation point
 $x_{t+1}^*$ of $\np{x_{t+1}^k}_{k\in \N}$ we have
 \begin{equation}
  \label{HR_sto}
  \overline{V}_{t+1}^*\np{x_{t+1}^*} = \underline{V}_{t+1}^*\np{x_{t+1}^*}.
 \end{equation}

 On the one hand, for every index $k\in \N$ one has
 \begin{align}
  \underline{V}_t^{k+1}\np{x_t^k}
   & = \aB{\underline{V}_{t+1}^{k+1}}{x_t^k},  \tag{\text{Tightness}}
  \\
   & \geq \aB{\underline{V}_{t+1}^{k}}{x_t^k},    \tag{\text{Monotonicity}} \\
   & = \Besp{ \pB[\mathbf{W_{t+1}}]{\underline{V}_{t+1}^{k}}{x_t^k}}
  \tag{\text{by definition of $\aB{}{}$}}  %
  \\
   & = \Besp{ \cost[\mathbf{W_{t+1}}]{x_t^k, u_t^{\mathbf{W_{t+1}}}} +
  \underline{V}_{t+1}^{k}\bp{\dyn[\mathbf{W_{t+1}}]{x_t^k, u_t^{\mathbf{W_{t+1}}}}}}
  \tag{by \Cref{eq:pc_control}}
  \\
   & = \sum_{w\in \supp{\mathbf{W_{t+1}}}}
  \Pro\bc{\mathbf{W_{t+1}} = w}
  \Bp{ \cost{x_t^k, u_t^w} + \underline{V}_{t+1}^{k}\bp{\dyn{x_t^k,u_t^w}}} \notag
  \eqfinp
 \end{align}
 On the other hand, for every index $k\in \N$ one has
 \begin{align}
  \overline{V}_t^{k+1}\np{x_t^k}
   & = \aB{\overline{V}_{t+1}^{k+1}}{x_t^k}, \tag{\text{Tightness}}
  \\
   & = \Besp{ \pB[\mathbf{W_{t+1}}]{\overline{V}_{t+1}^{k+1}}{x_t^k}} \notag
  \\
   & \leq \Besp{ \cost[\mathbf{W_{t+1}}]{x_t^k, u_t^{\mathbf{W_{t+1}}}} +
  \overline{V}_{t+1}^{k+1}\bp{\dyn[\mathbf{W_{t+1}}]{x_t^k, u_t^{\mathbf{W_{t+1}}}}}}
  \tag{\text{Def. of pointwise $\pB{}{}$}}
  \\
   & \leq \Besp{ \cost[\mathbf{W_{t+1}}]{x_t^k, u_t^{\mathbf{W_{t+1}}}} +
  \overline{V}_{t+1}^{k}\bp{\dyn[\mathbf{W_{t+1}}]{x_t^k, u_t^{\mathbf{W_{t+1}}}}}}
  \tag{\text{Monotonicity}}
  \\
   & = \sum_{w\in \supp{\mathbf{W_{t+1}}}} \Pro\nc{\mathbf{W_{t+1}} = w}
  \Bp{\cost{x_t^k, u_t^{w}} + \overline{V}_{t+1}^{k}\bp{\dyn{x_t^k,u_t^{w}}}} \notag
  \eqfinp
 \end{align}
 By definition of the problem-child trajectory, recall that $u_t^k := u_t^{w_t^k}$,
 thus we have $x_{t+1}^k := \dyn[w_t^k]{x_t^k, u_t^k}$ and for every $k\in \N$
 \begin{align*}
  0 \leq \overline{V}_t^{k+1}\np{x_t^k} - \underline{V}_t^{k+1}\np{x_t^k}
   & \leq \sum_{w\in \supp{\mathbf{W_{t+1}}}} \bprob{\mathbf{W_{t+1}} = w}
  \Bp{ \np{\overline{V}_{t+1}^{k} - \underline{V}_{t+1}^k}\bp{\dyn{x_t^k,u_t^w}}}
  \\
   & \leq \overline{V}_{t+1}^{k}\np{x_{t+1}^k} - \underline{V}_{t+1}^{k}\np{x_{t+1}^k}
  \eqfinp
 \end{align*}
 Thus, we get that for every function $\sigma : \N \to \N$
 \begin{equation}
  \label{Presque_sto}
  0 \leq \overline{V}_t^{\sigma(k)+1}\np{x_t^{\sigma(k)}} - \underline{V}_t^{\sigma(k)+1}\np{x_t^{\sigma(k)}}
  \leq \overline{V}_{t+1}^{\sigma(k)}\np{x_{t+1}^{\sigma(k)}} - \underline{V}_{t+1}^{\sigma(k)}\np{x_{t+1}^{\sigma(k)}}
  \eqfinp
 \end{equation}
 By Lemma~\ref{lem:conv_commune} and continuity of the dynamics, there exists an
 increasing function $\sigma : \N \to \N$ such that the sequence of future
 states
 $x_{t+1}^{\sigma(k)} =
  \dyn[w_{t+1}^{\sigma(k)}]{x_t^{\sigma(k)},u_t^{\sigma(k)}}$, $k\in \N$,
 converges to some future state $x_{t+1}^* \in X_{t+1}$.  Thus, by
 Lemma~\ref{lem:unif_conv} applied to the $2L_{V_{t+1}}$-Lipschitz functions
 $g^k := \overline{V}_{t+1}^{\sigma(k)} - \underline{V}_{t+1}^{\sigma(k)}$,
 $k\in \N$ and the sequence $y^k := x_{t+1}^{\sigma(k)}$, $k\in \N$ we have that
 \begin{equation*}
  \overline{V}_{t+1}^{\sigma(k)}\np{x_{t+1}^{\sigma(k)}} - \underline{V}_{t+1}^{\sigma(k)}\np{x_{t+1}^{\sigma(k)}}
  \underset{k\rightarrow + \infty}{\longrightarrow}
  \overline{V}_{t+1}^{*}\np{x_{t+1}^{*}} - \underline{V}_{t+1}^{*}\np{x_{t+1}^{*}}
  \eqfinp
 \end{equation*}
 Likewise, by Lemma~\ref{lem:unif_conv} applied to the $2L_{V_t}$-Lipschitz
 functions
 $g^k := \overline{V}_{t}^{\sigma(k)+1} - \underline{V}_{t}^{\sigma(k)+1}$,
 $k\in \N$ and the sequence $y^k := x_{t}^{\sigma(k)}$, $k\in \N$ we have that
 \begin{equation*}
  \overline{V}_{t}^{\sigma(k)+1}\np{x_{t}^{\sigma(k)}} - \underline{V}_{t}^{\sigma(k)+1}\np{x_{t}^{\sigma(k)}}
  \underset{k\rightarrow + \infty}{\longrightarrow}
  \overline{V}_{t}^{*}\np{x_{t}^{*}} - \underline{V}_{t}^{*}\np{x_{t}^{*}}
  \eqfinp
 \end{equation*}
 Thus, taking the limit in  $k$ in \Cref{Presque_sto}, we have that
 \begin{equation*}
  \label{IncreasingGap*_sto}
  0 \leq \overline{V}_t^*\np{x_t^*} - \underline{V}_t^*\np{x_t^*} \leq  \overline{V}_{t+1}^{*}\np{x_{t+1}^{*}} - \underline{V}_{t+1}^{*}\np{x_{t+1}^{*}}.
 \end{equation*}
 By induction hypothesis \eqref{HR_sto} we have that
 $\overline{V}_{t+1}^{*}\np{x_{t+1}^{*}} - \underline{V}_{t+1}^{*}\np{x_{t+1}^{*}} = 0$. Thus,
 we have shown that
 \[
  \overline{V}_t^*\np{x_t^*} = \underline{V}_t^*\np{x_t^*}
  \eqfinp
 \]
 This concludes the proof.
\end{proof}

\section{Illustrations in the linear-polyhedral framework}
\label{sec:numerique}

In this section, we first present a class of Lipschitz MSP that we call
\emph{linear-polyhedral} MSP where dynamics are linear and costs are polyhedral,
\emph{i.e.} functions with convex polyhedral epigraph. Second, we give three
selection functions, one which generates polyhedral lower approximations (see
\S\ref{sec:SDDP}) and two which generates upper approximations, one as infima of
$U$-shaped functions (see \S\ref{sec:U}) and one as infima of $V$-shaped
functions (see \S\ref{sec:V}).

In Table~\ref{recapitulatif_numerique} we illustrate the flexibility made
available by TDP to the decision maker to approximate value functions.
Implementations were done in the programming language Julia 1.4.2 using
the optimization interface JuMP 0.21.3, \cite{Du.Hu.Lu2017}. The code
is available online (\url{https://github.com/BenoitTran/TDP})
as a collection of Julia Notebooks.

\begin{table}[h]
 \centering
 \begin{tabular}{|c||c|c|c|c|}
  \hline
  Selection mapping & Tight  & Valid  & Averaged & Computational difficulty                                           \\
  \hline
  SDDP              & \cmark & \cmark & \cmark   & $\mathrm{Card}\np{\mathbf{W_{t+1}}}$ LPs                           \\
  \hline
  U                 & \cmark & \xmark & \cmark   & $\mathrm{Card}\np{\mathbf{W_{t+1}}} \cdot \mathrm{Card}\np{F}$ QPs \\
  \hline
  V                 & \cmark & \cmark & \xmark   & one LP                                                             \\
  \hline
 \end{tabular}
 \caption{\label{recapitulatif_numerique} Summary of the three selection functions
  presented in Section~\ref{sec:numerique}.}
\end{table}

\subsection{Linear-polyhedral MSP}
We want to solve MSPs where the dynamics are linear and the costs are
polyhedral. That is, we want to solve optimization problems of the form
\eqref{MSP} where for each time step $t \in \ce{0,T{-}1}$ the state dynamics
is linear, $\dyn{x,u} = A^w_t x + B^w_t u$ for some matrices $A_t^w$ and $B_t^w$ of
coherent dimensions and the cost is polyhedral:
\begin{equation}
 \label{eq:polyhedralcosts}
 \cost{x,u} = \max_{i \in I_t} \bscal{c^{i,w}_t}{(x;u)} + d_t^{i,w} + \delta_{P_t^w}(x,u)
 \eqfinp
\end{equation}
where $I_t$ is a finite set, $c^{i,w} \in \X{\times}\U$, $d_t^{i,w}$ is a scalar
and $P_t^w$ is a convex polyhedron. The final cost function $\psi$ is of the
form $\psi(x) = \max_{i \in I_T} \nscal{c^{i}_T}{x} + d_T^{i}+ \delta_{X_{T}}$
where ${X_{T}}$ is a nonempty convex polytope. We assume that
Assumption~\ref{whitenoise}, \ref{compactStates} and \ref{recourse} are
satisfied.

\begin{proposition}[Linear-polyhedral MSP are Lipschitz MSP]
 \label{linpol_are_lipschitz}
 Linear-polyhedral MSP are Lipschitz MSP in the sense of Assumption~\ref{LipschitzMSP}.
\end{proposition}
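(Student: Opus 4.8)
The plan is to verify, one at a time, the three requirements bundled into Assumption~\ref{LipschitzMSP}: Lipschitz continuity of the dynamic $\dyn{}$, of the cost $\cost{}$ on its domain, and of the constraint set-valued mapping $\cU$ on $X_t$. The first two are immediate from the linear-polyhedral structure, and essentially all the difficulty sits in the third.

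First, for every $t\in \ce{0,T-1}$ and $w\in \supp{\mathbf{W_{t+1}}}$ the dynamic $\dyn{x,u} = A_t^w x + B_t^w u$ is linear, hence globally Lipschitz with constant the operator norm of the matrix $\nc{A_t^w\; B_t^w}$; in particular it is Lipschitz on $\dom \cost{}$. Second, on its domain $\dom \cost{} = P_t^w$ the cost coincides with the finite maximum $\max_{i\in I_t} \bscal{c_t^{i,w}}{(x;u)} + d_t^{i,w}$ of affine functions. Each affine summand is Lipschitz, and a finite maximum of Lipschitz functions is Lipschitz with modulus the largest of the individual ones, so $\cost{}$ is Lipschitz on its domain.

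The crux is the third point, where I would show that $\cU$ is Hausdorff-Lipschitz on $X_t$. First I would verify that its graph is a polyhedron. By the definition of $\cU$, a pair $\np{x,u}$ lies in $\mgraph \cU$ exactly when $\np{x,u}\in P_t^w$ and $A_t^w x + B_t^w u \in X_{t+1}$. Here $P_t^w$ is polyhedral by hypothesis; $X_{t+1}$ is polyhedral by a short backward induction (the terminal set $X_T = \dom \psi$ is a polytope, and each $X_t$ defined in~\eqref{Xt} is a finite intersection of projections $\pi_\X\np{\dom \cost{}} = \pi_\X\np{P_t^w}$ of polyhedra, which are themselves polyhedral by Fourier--Motzkin elimination); and $\np{x,u}\mapsto A_t^w x + B_t^w u$ is linear, so the condition $A_t^w x + B_t^w u\in X_{t+1}$ carves out a polyhedron. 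Being the intersection of two polyhedra, $\mgraph \cU$ is itself a polyhedron. I would then invoke the classical Hoffman-type Lipschitz bound for polyhedral multifunctions (Walkup--Wets, Robinson): a set-valued mapping with polyhedral graph is Lipschitz on its domain, so there is a constant $L_{\cU}$ with
\[
 \cU\np{x_1} \subseteq \cU\np{x_2} + L_{\cU}\norm{x_1 - x_2}\, B
\]
for all $x_1, x_2 \in X_t$, where $B$ denotes the closed unit ball of $\U$. Exchanging the roles of $x_1$ and $x_2$, and using that $\cU$ is nonempty compact valued on $X_t$ under the recourse Assumption~\ref{recourse}, turns this two-sided inclusion into the Hausdorff estimate $d_{\mathcal{H}}\bp{\cU\np{x_1},\cU\np{x_2}} \leq L_{\cU}\norm{x_1-x_2}$ demanded by Assumption~\ref{LipschitzMSP}.

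The main obstacle is exactly this last step. The polyhedrality of $\mgraph \cU$ is routine once one observes that $X_{t+1}$ is itself polyhedral, but producing the \emph{uniform} Lipschitz constant $L_{\cU}$ is not elementary: it is precisely the content of the Hoffman/Walkup--Wets error bound for polyhedral multifunctions, and it supplies the constant that, as noted right after Assumption~\ref{LipschitzMSP}, is otherwise difficult to compute. Everything else reduces to bookkeeping once this structural result is in hand.
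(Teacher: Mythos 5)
Your proof is correct and follows essentially the same route as the paper's: reduce everything to showing that $\mgraph\,\cU$ is a convex polyhedron (via the polyhedrality of $P_t^w$ and of $X_{t+1}$ as a finite intersection of projections of polyhedra) and then invoke the classical result that polyhedral set-valued mappings are Lipschitz on their domains, which is exactly the paper's citation of \cite[Example 9.35]{Ro.We2009}. The only cosmetic difference is that you frame the polyhedrality of $X_{t+1}$ as a backward induction, whereas each $X_t$ in~\eqref{Xt} is defined directly from the problem data, so no induction is needed.
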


\begin{proof}
 By construction, the costs $\cost{}{}$ and the dynamics $\dyn{}{}$ are
 Lipschitz continuous with explicit constants. We show that for every
 $t\in \ce{0,T{-}1}$ and each $w\in \supp{\mathbf{W_{t+1}}}$, the constraint
 set-valued mapping $\cU{}{}$ is Lipschitz continuous. From \cite[Example
  9.35]{Ro.We2009}, it is enough to show that the graph of $\cU{}{}$ is a convex
 polyhedron. By assumption $\dom\, \cost{}{}$ is a convex polyhedron and by
 recourse $\mgraph\, \cU{}{}$ is nonempty. As a nonempty intersection of convex
 polyhedron is a convex polyhedron, we only have to show that
 $\nset{\np{x,u} \in \X{\times}\U}{\dyn{x,u} \in X_{t+1}}$ is a convex
 polyhedron as well.

 Using Equation~\eqref{Xt} we have that $X_{t+1}$ is given by
 $X_{t+1} = \cap_{w \in \supp{\mathbf{W_{t+2}}}}\projname_{\X} \bp{ \dom
   \,\costname_{t+1}^w}$, which is the nonempty intersection of convex
 polyhedron. Thus, $X_{t+1}$ is a convex polyhedron which implies that
 there exist a matrix $Q_{t+1}$ and a vector $b_{t+1}$ such that
 $X_{t+1} = \bset{{x}\in \X}{ Q_{t+1} x \leq b_{t+1}}$.
 Therefore, we obtain that the two following sets coincide
 $$
  \bset{ \np{x,u}\in \X{\times}\U}{\dyn{x,u} \in X_{t+1}}
  =
  \bset{ \np{x,u}\in \X{\times}\U}{Q_{t+1}A_t^wx +  Q_{t+1}B_t^wu \le b_{t+1}}
  \eqfinp
 $$
 The latter being convex polyhedral we obtain that the former is convex polyhedral.
 This ends the proof.
\end{proof}

Now, observe that as linear-polyhedral MSP are Lipschitz MSP, by
Corollary~\ref{Lipschitz_Vt}, the value function $V_t$ is $L_{V_t}$-Lipschitz
continuous on $X_t$ for all $t\in \ic{0,T}$. Moreover, under the recourse assumption~\ref{recourse} we
can show that the Bellman operators ${\aB{}{}}_{t\in \ic{0,T{-}1}}$ preserves polyhedrality in the
sense defined below.

\begin{lemma}[$\aB{}{}$ preserves polyhedrality]
 \label{polypreserved}
 For every $t\in \ce{0,T{-}1}$, if $\func : \X \to \Rb$ is a \emph{polyhedral
  function}, \emph{i.e.} its epigraph is a convex polyhedron, then
 $\aB{\func}{}$ is a polyhedral function as well.
\end{lemma}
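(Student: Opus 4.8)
The plan is to show that the property of being a polyhedral function is preserved under each elementary operation composing the average Bellman operator $\aB{}{}$, and then to close under the finitely many combinations that appear. Recall that $\aB{\func}{x} = \sum_{w} p_w \pB{\func}{x}$ where $\pB{\func}{x} = \min_{u} \bp{ \cost{x,u} + \func\bp{\dyn{x,u}} }$. Since a finite sum (with nonnegative coefficients $p_w$) of polyhedral functions is polyhedral, and since $\supp{\mathbf{W_{t+1}}}$ is finite by \Cref{whitenoise}, it suffices to prove that each pointwise operator $\pB{}{}$ maps a polyhedral $\func$ to a polyhedral function. So I would fix $w\in \supp{\mathbf{W_{t+1}}}$ and reduce the whole statement to the single-noise claim.

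For the single-noise claim, I would decompose $\pB{}{}$ into three steps, each preserving polyhedrality. First, precomposition with the affine dynamics: $\func$ polyhedral implies $(x,u)\mapsto \func\bp{A_t^w x + B_t^w u}$ is polyhedral, because composing a polyhedral function with an affine map keeps the epigraph polyhedral (the epigraph is the preimage of a polyhedron under an affine map on $\X{\times}\U{\times}\R$). Second, addition of the polyhedral cost $\cost{x,u}$ from \eqref{eq:polyhedralcosts}: the sum of two polyhedral functions is polyhedral, so $g(x,u):=\cost{x,u} + \func\bp{\dyn{x,u}}$ is polyhedral. Third, partial minimization: $\pB{\func}{x} = \inf_{u\in\U} g(x,u)$, and the infimal projection of a polyhedral function along the $u$-coordinates is again polyhedral. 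This last fact is the crux, and I would justify it by the standard result (e.g.\ \cite[Example 9.35 or the polyhedral calculus]{Ro.We2009}) that the image of a convex polyhedron under the linear projection $(x,u,\alpha)\mapsto(x,\alpha)$ is a convex polyhedron: indeed $\epi \pB{\func}{\cdot}$ is precisely the projection of $\epi g$ onto the $(x,\alpha)$-coordinates.

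The main obstacle is the partial-minimization step, and within it the subtlety that infimal projection of a general convex function need not preserve closedness of the epigraph, so one must genuinely use polyhedrality rather than mere convexity. Here polyhedrality saves us because the projection of a polyhedron is always a polyhedron (Fourier--Motzkin elimination), with no closure issues. A second, smaller point to address is that the infimum defining $\pB{\func}{x}$ is attained and finite on $X_t$: this follows from the recourse \Cref{recourse}, which guarantees $\cU\np{x}$ is nonempty and compact so that the minimization is over a nonempty compact set and $\pB{\func}{x}<+\infty$ on $X_t$, while outside $X_t$ it equals $+\infty$; thus the projected polyhedron is nonempty and the resulting function has the expected effective domain $X_t$.

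In summary, I would (i) use finiteness of the noise support and stability of nonnegative finite sums to reduce to a single $w$; (ii) write $\pB{\func}{\cdot}$ as the infimal projection of $(x,u)\mapsto \cost{x,u}+\func\bp{\dyn{x,u}}$; (iii) check that affine precomposition and addition preserve polyhedrality, so the integrand is polyhedral; and (iv) invoke polyhedrality of projections of polyhedra to conclude that $\pB{\func}{\cdot}$, and hence $\aB{\func}{\cdot}$, is polyhedral, using \Cref{recourse} to control finiteness and attainment.
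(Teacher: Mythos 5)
Your proposal is correct and follows essentially the same route as the paper: reduce to a single noise value by finiteness of the support and stability of polyhedrality under finite nonnegative sums, observe that $(x,u)\mapsto \cost{x,u}+\func\bp{\dyn{x,u}}$ is polyhedral, and conclude by the fact that infimal projection (partial minimization) preserves polyhedrality, which the paper cites from Borwein--Lewis where you cite Rockafellar--Wets and Fourier--Motzkin. Your added remarks on attainment via \Cref{recourse} and on why mere convexity would not suffice are sound refinements of the same argument rather than a different proof.
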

\begin{proof}
 For every $w\in \supp{\mathbf{W_{t+1}}}$, we have shown in the proof of
 Proposition~\ref{linpol_are_lipschitz} that the graph of $\cU{}{}$ is a convex
 polyhedron. Thus,
 $\np{x,u} \mapsto \cost{x,u} + \func\bp{\dyn{x,u}} + \delta_{\mgraph
   \cU{}{}}\np{x,u}$ is convex polyhedral and by \cite[Proposition
  5.1.8.e]{Bo.Le2006}, $\pB{\func}{}$ is polyhedral as well.  Finally, under
 Assumption~\ref{whitenoise}, we deduce that
 $\aB{\func}{} := \sum_{w \in \supp{\mathbf{W_{t+1}}}} \pB{\func}{}$ is
 polyhedral as a finite sum of polyhedral functions. This ends the proof.
\end{proof}

\subsection{SDDP lower approximations}
\label{sec:SDDP}

Stochastic Dual Dynamic Programming is a popular algorithm which was introduced
by Perreira and Pinto in 1991 (see \cite{Pe.Pi1991}) and studied extensively
since then, \emph{e.g.} \cite{Ah.Ca.da2019,Ba.Do.Za2018,Gu2014,Ph.Gu2008,Ph.de.Fi2013,Sh2011,Zo.Ah.Su2018}.

Lemma~\ref{polypreserved} is the main intuitive justification of using SDDP in
linear-polyhedral MSPs: if the final cost function is polyhedral, as the operators
$\na{\aB{}{}}_{t\in \ic{0,T{-}1}}$ preserve polyhedrality, by backward induction on time, we obtain
that the value function $V_t$ is polyhedral for every $t\in \ce{0,T}$. Hence, the
decision maker might be tempted to construct polyhedral approximations of $V_t$
as well.

We now present a way to generate polyhedral lower approximations of
value functions, as done in the literature of SDDP, by defining a proper selection
mapping. When the value functions are convex, it builds lower approximations as suprema
of affine cuts. We put SDDP in TDP's framework by constructing a lower selection function.

First, for every time step $t\in \ce{0,T}$, define the set of basic functions,
\[
 \lFuncb_t^{\mathrm{SDDP}} :=
 \bset{\nscal{a}{\cdot} + b + \delta_{X_t}}{\np{a,b} \in \X{\times}\R \ \text{s.t.} \ \norm{a} \leq L_{V_t}}
 \eqfinp
\]
At time $t = T$, given a trial point $x \in X_T$, we define
$\underline{S}_{T}^{\text{SDDP}}\np{ x} = \nscal{a_x}{\cdot -x} + b_x$, where $a_x$
is a subgradient of the convex polyhedral function $\psi$ at $x$ and
$b_x = \psi\np{x}$. Tightness and validity of $\underline{S}_{T}^{\text{SDDP}}$
follows from the given expression. Now, for $t \in \ic{0,T-1}$, we compute a
tight and valid cut for $\pB{}{}$ for each possible value of the noise $w$ then
average it to get a tight and valid cut for $\aB{}{}$. The details are given in
Algorithm~\ref{SDDP_Selection}.

\begin{algorithm}
 \caption{\label{SDDP_Selection}SDDP Selection function $\underline{S}_{t}^{\mathrm{SDDP}}$ for $t<T$}
 \begin{algorithmic}
  \REQUIRE{A set of basic functions $\lFunc_{t+1} \subset \lFuncb_{t+1}^{\mathrm{SDDP}}$ and a trial point $x_t \in X_t$.}
  \ENSURE{A tight and valid basic function $\lfunc_t \in \lFuncb_{t}^{\mathrm{SDDP}}$.}
  \FOR{$w\in \supp{\mathbf{W_{t+1}}}$}
  \STATE{Solve by linear programming $b^w := \pB{\lvopt_{\lFunc_{t+1}}}{x}$ and compute a subgradient $a^w$ of $\pB{\lvopt_{\lFunc_{t+1}}}{}$ at $x$.}
  \ENDFOR
  \STATE{Set $\lfunc := \langle a, \cdot \rangle + b  + \delta_{X_t}$ where $a := \sum_{w\in \supp{\mathbf{W_{t+1}} }} p_w a^w$ and $b = \sum_{w\in \supp{\mathbf{W_{t+1}}}} p_w b^w$.}
 \end{algorithmic}
\end{algorithm}

We say that $\ZSelection[w]{}$ is a selection function for $\pB{}{}$, for a given
noise value $w \in \supp{\mathbf{W_{t+1}}}$ if
Definition~\ref{CompatibleSelection} is satisfied when replacing $\aB{}{}$ by
$\pB{}{}$. We now prove that $\underline{S}_{t}^{\text{SDDP}}$ is a selection
function, \emph{i.e.} it is tight and valid in the sense of
Definition~\ref{CompatibleSelection}. It follows from the general fact that by
averaging functions which are tight and valid for the pointwise Bellman
operators $\pB{}{}$, $w \in \supp{\mathbf{W_{t+1}}}$, then one get a tight and
valid function for the average Bellman operator $\aB{}{}$. Note that the average
of affine functions is still an affine function, the set of basic functions
$\lFuncb_t^{\mathrm{SDDP}}$ is stable by averaging.

\begin{lemma}
 \label{pointwisetoaverage}
 Let a time step $t \in \ce{0,T{-}1}$ be fixed and let be given for every noise
 value $w\in \supp{\mathbf{W_{t+1}}}$ a selection function
 $\ZSelection[w]{}$ for $\pB{}{}$. Then, the mapping $\ZSelection[]{}$ defined
 by $\ZSelection[]{} = \E\nc{\ZSelection[\mathbf{W_{t+1}}]{}}$ is a selection
 mapping for $\aB{}{}$.
\end{lemma}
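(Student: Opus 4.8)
The plan is to exploit the linearity of the expectation together with the very definition of the average Bellman operator. Since $\mathbf{W_{t+1}}$ has finite support by Assumption~\ref{whitenoise}, the mapping $\ZSelection[]{} = \E[\ZSelection[\mathbf{W_{t+1}}]{}]$ is a finite convex combination $\sum_{w \in \supp{\mathbf{W_{t+1}}}} p_w \, \ZSelection[w]{}$, where $\na{p_w}_{w}$ is the law of $\mathbf{W_{t+1}}$; and by construction $\aB{}{} = \sum_{w \in \supp{\mathbf{W_{t+1}}}} p_w \, \pB{}{}$. Every property to be checked will then follow by distributing this weighted sum through pointwise evaluation and through the inequalities defining validity.

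First I would verify that $\ZSelection[]{}$ is a well-defined mapping into the right class. Fix $\Func_{t+1} \subset \Funcb_{t+1}$ and $x \in X_t$. Each $\ZSelection[w]{\Func_{t+1}, x}$ is a basic function at time $t$, hence $L_{V_t}$-Lipschitz on $X_t$; a convex combination of finitely many $L_{V_t}$-Lipschitz functions (nonnegative weights summing to one) is again $L_{V_t}$-Lipschitz, so $\ZSelection[]{\Func_{t+1}, x} \in \Funcb_t$. Thus $\ZSelection[]{}$ maps $\mathcal{P}\np{\Funcb_{t+1}} {\times} X_t$ into $\Funcb_t$.

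Next, for tightness, I would evaluate at the base point $x$ and invoke tightness of each $\ZSelection[w]{}$ for $\pB{}{}$:
\[
 \ZSelection[]{\Func_{t+1}, x}\np{x}
 = \sum_{w \in \supp{\mathbf{W_{t+1}}}} p_w \, \ZSelection[w]{\Func_{t+1}, x}\np{x}
 = \sum_{w \in \supp{\mathbf{W_{t+1}}}} p_w \, \pB{\vopt_{\Func_{t+1}}}{x}
 = \aB{\vopt_{\Func_{t+1}}}{x},
\]
the last equality being exactly the definition of $\aB{}{}$ as the $p_w$-average of the $\pB{}{}$. For validity I would run the same averaging but keep the inequality over all of $X_t$: when building upper approximations each $\ZSelection[w]{\Func_{t+1}, x} \geq \pB{\vopt_{\Func_{t+1}}}{\cdot}$ on $X_t$, and since every weight $p_w$ is nonnegative the weighted sum preserves the inequality, giving $\ZSelection[]{\Func_{t+1}, x} \geq \sum_{w} p_w \, \pB{\vopt_{\Func_{t+1}}}{\cdot} = \aB{\vopt_{\Func_{t+1}}}{\cdot}$; the lower-approximation case is identical with the inequality reversed.

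There is no genuine obstacle here: the whole argument is a linearity-and-monotonicity computation. The only points deserving a moment of care are that pointwise evaluation commutes with the finite weighted sum (immediate because the sum is finite) and that it is precisely the nonnegativity of the probabilities $p_w$ that lets the validity inequality survive the averaging.
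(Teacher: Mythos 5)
Your proof is correct and follows essentially the same route as the paper's: take the expectation of the pointwise tightness equality and of the pointwise validity inequalities, and use the identity $\aB{\vopt_{\Func}}{x} = \E\nc{\pB[\mathbf{W_{t+1}}]{\vopt_{\Func}}{x}}$ together with nonnegativity of the weights $p_w$. The only addition you make beyond the paper is the explicit check that a convex combination of $L_{V_t}$-Lipschitz basic functions remains in $\Funcb_t$, which the paper leaves implicit (and addresses only in the surrounding discussion of stability of the basic-function classes under averaging).
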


\begin{proof}
 Fix $t\in \ce{0,T{-}1}$. Given a trial point $x\in X_t$ and a set of basic
 functions $\Func$, the pointwise tightness (resp. validity) equality
 (resp. inequality) is satisfied for every realization $w$ of the noise
 $\mathbf{W_{t+1}}$, that is
 \begin{align}
   & \ZSelection[w]{F, x}\np{x} = \pB{\vopt_{\Func}}{x}, \tag{Pointwise tightness}                                 \\
   & \ZSelection[w]{F, x} \geq \pB{\uvopt_{\Func}}{} , \tag{Pointwise validity when building upper approximations} \\
   & \ZSelection[w]{F, x} \leq \pB{\lvopt_{\Func}}{} \tag{Pointwise validity when building lower approximations}.
 \end{align}
 Recall that
 $\aB{\vopt_{\Func}}{x} = \E \nc{\pB[\mathbf{W_{t+1}}]{\vopt_{\Func}}{x}}$, thus
 taking the expectation in the above equality and inequalities, one gets the
 lemma.
\end{proof}

\begin{proposition}[SDDP Selection function]
 For every $t\in \ce{0,T}$, the mapping $\underline{S}_{t}^{\mathrm{SDDP}}$ is
 a selection function in the sense of Definition~\ref{CompatibleSelection}.
\end{proposition}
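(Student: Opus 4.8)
The plan is to verify the two defining properties of Definition~\ref{CompatibleSelection}, tightness and validity, by treating the terminal stage $t=T$ directly and reducing the case $t<T$ to the pointwise statement already packaged in Lemma~\ref{pointwisetoaverage}. The structural fact I would lean on throughout is that, in the linear-polyhedral setting, every function $\lvopt_{\lFunc_{t+1}}$ is a finite supremum of affine functions restricted to the polyhedron $X_{t+1}$, hence convex polyhedral and $L_{V_{t+1}}$-Lipschitz on $X_{t+1}$; this is exactly what guarantees that subgradients exist and can serve as cut slopes.

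For $t=T$ the selection function is $\underline{S}_T^{\mathrm{SDDP}}(x) = \nscal{a_x}{\cdot - x} + b_x + \delta_{X_T}$ with $a_x \in \partial\psi(x)$ and $b_x = \psi(x)$. Tightness is immediate on evaluating at $x$, where the output equals $b_x = \psi(x) = V_T(x)$. Validity is precisely the subgradient inequality for the convex function $\psi$, namely $\psi(y) \ge \psi(x) + \nscal{a_x}{y-x}$ for all $y\in X_T$, which reads $\underline{S}_T^{\mathrm{SDDP}}(x) \le \psi = V_T$. Since $\psi$ is $L_{V_T}$-Lipschitz its subgradients satisfy $\norm{a_x} \le L_{V_T}$, so the output indeed lies in $\lFuncb_T^{\mathrm{SDDP}}$.

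For $t<T$, Lemma~\ref{pointwisetoaverage} reduces the task to showing that, for each noise value $w$, the map producing $b^w + \nscal{a^w}{\cdot - x} + \delta_{X_t}$ in Algorithm~\ref{SDDP_Selection} is a selection function for $\pB{}{}$. The argument of Lemma~\ref{polypreserved} shows $\pB{\lvopt_{\lFunc_{t+1}}}{}$ is convex polyhedral and finite on $X_t$, hence subdifferentiable at the trial point $x\in X_t$ so that $a^w$ exists; this is where the linear dynamics and polyhedral costs are genuinely used. Tightness then holds by construction, since $b^w = \pB{\lvopt_{\lFunc_{t+1}}}{x}$ forces the cut to equal $b^w$ at $x$, and validity is the subgradient inequality $\pB{\lvopt_{\lFunc_{t+1}}}{y} \ge b^w + \nscal{a^w}{y-x}$ for $y\in X_t$. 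Averaging these pointwise cuts with the probabilities $p_w$ reproduces $\underline{S}_t^{\mathrm{SDDP}}$, and Lemma~\ref{pointwisetoaverage} promotes tightness and validity from $\pB{}{}$ to $\aB{}{}$.

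The main obstacle, and the only place where care is really needed, is confirming that the averaged cut lands in the basic-function set $\lFuncb_t^{\mathrm{SDDP}}$, i.e. that its slope $a = \sum_w p_w a^w$ satisfies $\norm{a}\le L_{V_t}$. Here I would invoke the proof of Proposition~\ref{lipschitz_regularity}: since $\lvopt_{\lFunc_{t+1}}$ is $L_{V_{t+1}}$-Lipschitz, each $\pB{\lvopt_{\lFunc_{t+1}}}{}$ is $L_w$-Lipschitz, whence $\norm{a^w}\le L_w$, and convexity of the norm gives $\norm{a} \le \sum_w p_w L_w = L_t = L_{V_t}$, the constant from Corollary~\ref{Lipschitz_Vt}. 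A minor bookkeeping remark is that writing the cut in the tight form $b^w + \nscal{a^w}{\cdot - x}$ rather than $\nscal{a^w}{\cdot} + b^w$ makes tightness transparent while leaving the slope, and therefore membership in $\lFuncb_t^{\mathrm{SDDP}}$, unchanged.
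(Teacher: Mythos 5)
Your proof is correct and follows essentially the same route as the paper: tightness by construction, validity via the subgradient inequality at $t=T$ and via averaging pointwise subgradients (Lemma~\ref{pointwisetoaverage}) for $t<T$. Your additional check that the averaged slope satisfies $\norm{a}\leq L_{V_t}$, so that the cut genuinely belongs to $\lFuncb_t^{\mathrm{SDDP}}$, is a detail the paper's proof leaves implicit and is a welcome addition.
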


\begin{proof}
 For $t=T$, for every $x_T\in X_T$, by construction we have
 \[
  \underline{S}_{T}^{\mathrm{SDDP}}\np{x_T} = \psi\np{x_T} = V_T\np{x_T}.
 \]
 Thus, $\underline{S}_{T}^{\mathrm{SDDP}}$ is tight and it is valid as
 $\underline{S}_{T}^{\mathrm{SDDP}}\np{x_T} = \langle a, \cdot - x_T \rangle +
  \psi\np{x_T}$ is an affine minorant of the convex function $\psi$ which is
 exact at $x_T$.  Now, fix $t \in\ic{0,T{-}1}$, a set of basic functions
 $\lFunc_t \subset \lFuncb_{t}^{\mathrm{SDDP}}$ and a trial point $x_t \in
  X_t$. By construction, $\underline{S}_{t}^{\mathrm{SDDP}}$ is tight as we have
 \[
  \underline{S}_{t}^{\mathrm{SDDP}}\bp{\lFunc_t, x_t}\np{x_t} =
  \nscal{a}{x_t - x_t} + \Besp{ \pB[\mathbf{W_{t+1}}]{\lvopt_{\lFunc_t}}{x_t}} = \aB{\lvopt_{\lFunc_t}}{x_t}.
 \]
 Moreover, for every $w\in \supp{\mathbf{W_{t+1}}}$, $a^w$ (see
 Algorithm~\ref{SDDP_Selection}) is a subgradient of $\pB{\lvopt_{\lFunc_t}}{}$
 at $x_t$. Thus as $a$ is equal to $\nesp{a^{\mathbf{W_{t+1}}}}$ it is a
 subgradient of $\aB{\lvopt_{\lFunc_t}}{}$ at $x_t$. Hence, the mapping
 $\underline{S}_{t}^{\mathrm{SDDP}}$ is valid.
\end{proof}

\subsection{$U$-upper approximations}
\label{sec:U}
We have seen in Lemma~\ref{pointwisetoaverage}, that in order to construct a
selection function for $\aB{}{}$, it suffices to construct a selection function
for each pointwise Bellman operator $\pB{}{}$. In order to do so, for upper
approximations we exploit the min-additivity of the pointwise Bellman operators
$\pB{}{}$. That is, given a set of functions $\Func$, we use the following
decomposition
\[
 \forall t\in \ce{0,T{-}1}, \forall x\in \X, \forall w \in \supp{\mathbf{W_{t+1}}}, \ \pB{\inf_{\func \in \Func} \func}{x} = \inf_{\func\in \Func}   \pB{\func}{x}.
\]
This is a decomposition of the computation of $\pB{\uvopt_{\Func}}{}$ which is
possible for upper approximations but not for lower approximations as for
minimization problems, the Bellman operators (average or pointwise) are min-plus
linear but generally not max-plus linear.

However, in linear-polyhedral MSP, the value functions are
polyhedral. Approximating from above value function $V_t$ by infima of convex
quadratics is not suited: in particular, one cannot ensure validity of a
quadratic at a kink of the polyhedral function $V_t$. Still, we present a
selection function which is tight but not valid. In the numerical experiment of
Figure~\ref{figure:U-SDDP}, we illustrate that the selection function defined below
might not be valid, but the error is still reasonable. Yet, this will motivate
the use of other basic functions more suited to the linear-polyhedral framework,
as done in \S\ref{sec:V}.

We consider basic functions that are $U$-shaped, \emph{i.e.} of the form
$\frac{c}{2}\lVert x - a \rVert^2 + b$ for some constant $c>0$, vector $a$ and
scalar $b$. We call such function a \emph{$c$-function}. We now fix a sequence
of constants $\np{c_t}_{t\in \ce{0,T}}$ such that $c_t > L_{V_t}$.  For every
time $t\in \ce{0,T}$, define the set of basic functions
\[
 \uFuncb_t^{\mathrm{U}} =
 \bset{ \frac{c_t}{2}\lVert x - a \rVert^2 + b + \delta_{X_t} }
 {\np{a,b} \in \X{\times}\R }
 \eqfinp
\]
At time $t=T$, we select the $c_T$-quadratic mapping which is equal to $\psi$ at point
$x\in X_T$ and has same (sub)gradient at $x$, \emph{i.e.}
$\overline{S}_{T}^{\text{U}}\np{x} = \frac{c_T}{2}\lVert \cdot - a \rVert^2 + b$
where $a = x - \frac{1}{c}\lambda$ and
$b = \psi\np{x} - \frac{1}{2c}\lVert \lambda \rVert^2$ with $\lambda$ being a
subgradient of $\psi$ at $x$.

The mapping $\overline{S}_{t}^{\text{U}}$ defined in Algorithm~\ref{U_Selection}
is tight but not necessarily valid, see an illustration in
Figure~\ref{figure:U-SDDP}. As with SDDP, in order to build a tight selection
function at $t<T$ for $\aB{}{}$ we first compute a tight selection function for
each $\pB{}{}$, $w\in \supp{\mathbf{W_{t+1}}}$, which can be done numerically by
quadratic programming.

\begin{algorithm}
 \caption{\label{U_Selection}U Selection function $\overline{S}_{t}^{\text{U}}$ for $t<T$}
 \begin{algorithmic}
  \REQUIRE{A set of basic functions $\uFunc_{t+1} \subset \uFuncb_{t+1}^{\mathrm{U}}$ and a trial point $x_t \in X_t$.}
  \ENSURE{A tight basic function $\ufunc_t \in \uFuncb_{t}^{\mathrm{U}}$.}
  \FOR{$w\in \supp{\mathbf{W_{t+1}}}$}
  \STATE{Solve by quadratic programming $v^w := \pB{\uvopt_{\uFunc_{t+1}}}{x} = \inf_{\ufunc \in \uFunc_{t+1}}\pB{\ufunc}{x}$ and compute $a^w =x - \frac{1}{c}\lambda$ and $b^w = v^w - \frac{1}{2c} \lVert \lambda \rVert^2$ with $\lambda$ being a subgradient of $\pB{\uvopt_{\uFunc_{t+1}}}{}$ at $x$.}
  \ENDFOR
  \STATE{Set $\ufunc := \frac{c_t}{2}\lVert \cdot - a \rVert^2 + b + \delta_{X_t}$  where $a := \E\nc{a^{\mathbf{W_{t+1}}}}$ and $b = \E\nc{ \frac{c_t}{2}\lVert \cdot - a \rVert^2 + b^{\mathbf{W_{t+1}}}}$.}
 \end{algorithmic}
\end{algorithm}

\begin{figure}
 \begin{center}
  \begin{subfigure}[b]{0.30\textwidth}
   \includegraphics[width=\linewidth]{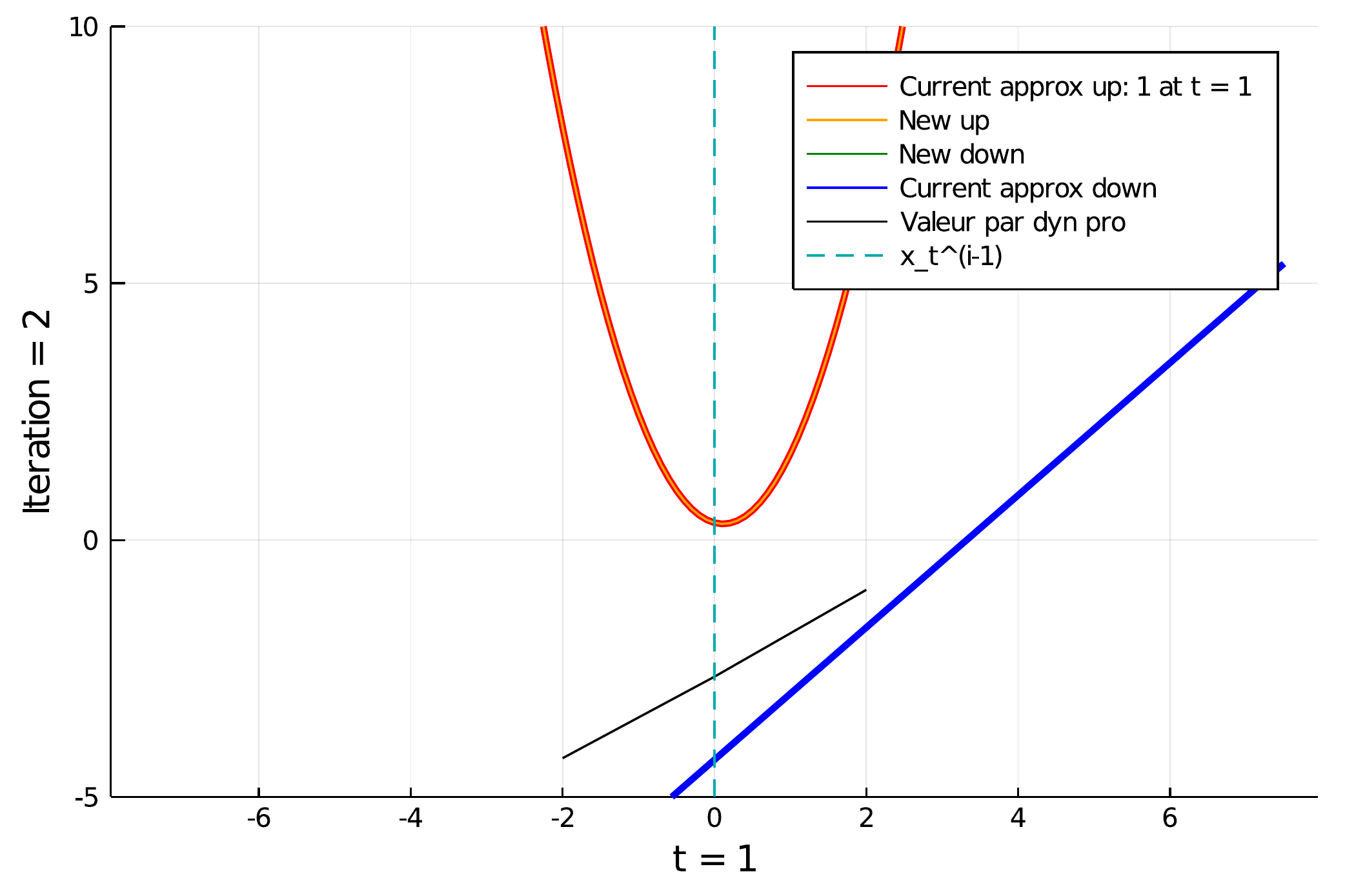}
  \end{subfigure}
  \begin{subfigure}[b]{0.3\textwidth}
   \includegraphics[width=\linewidth]{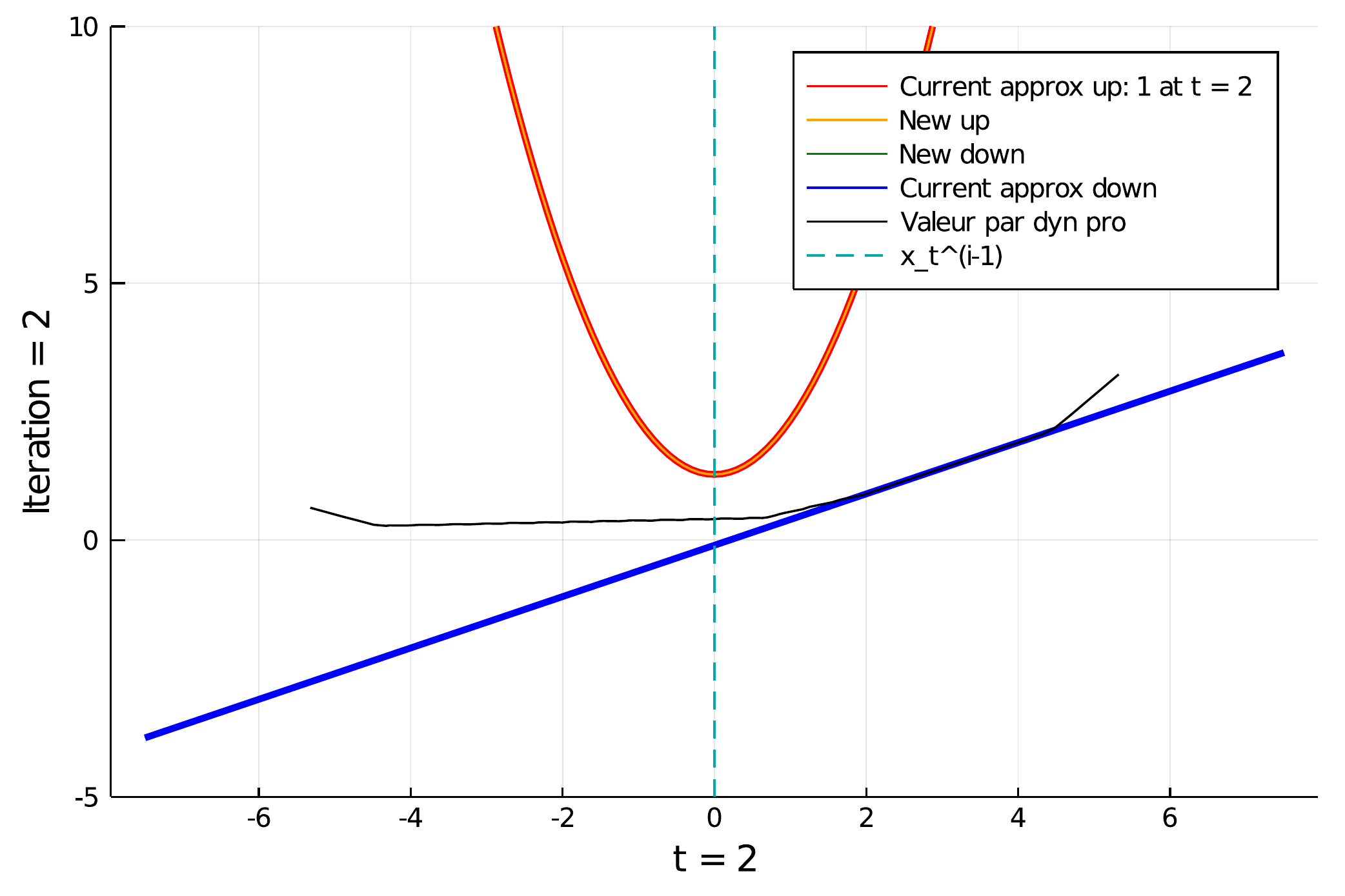}
  \end{subfigure}
  \begin{subfigure}[b]{0.3\textwidth}
   \includegraphics[width=\linewidth]{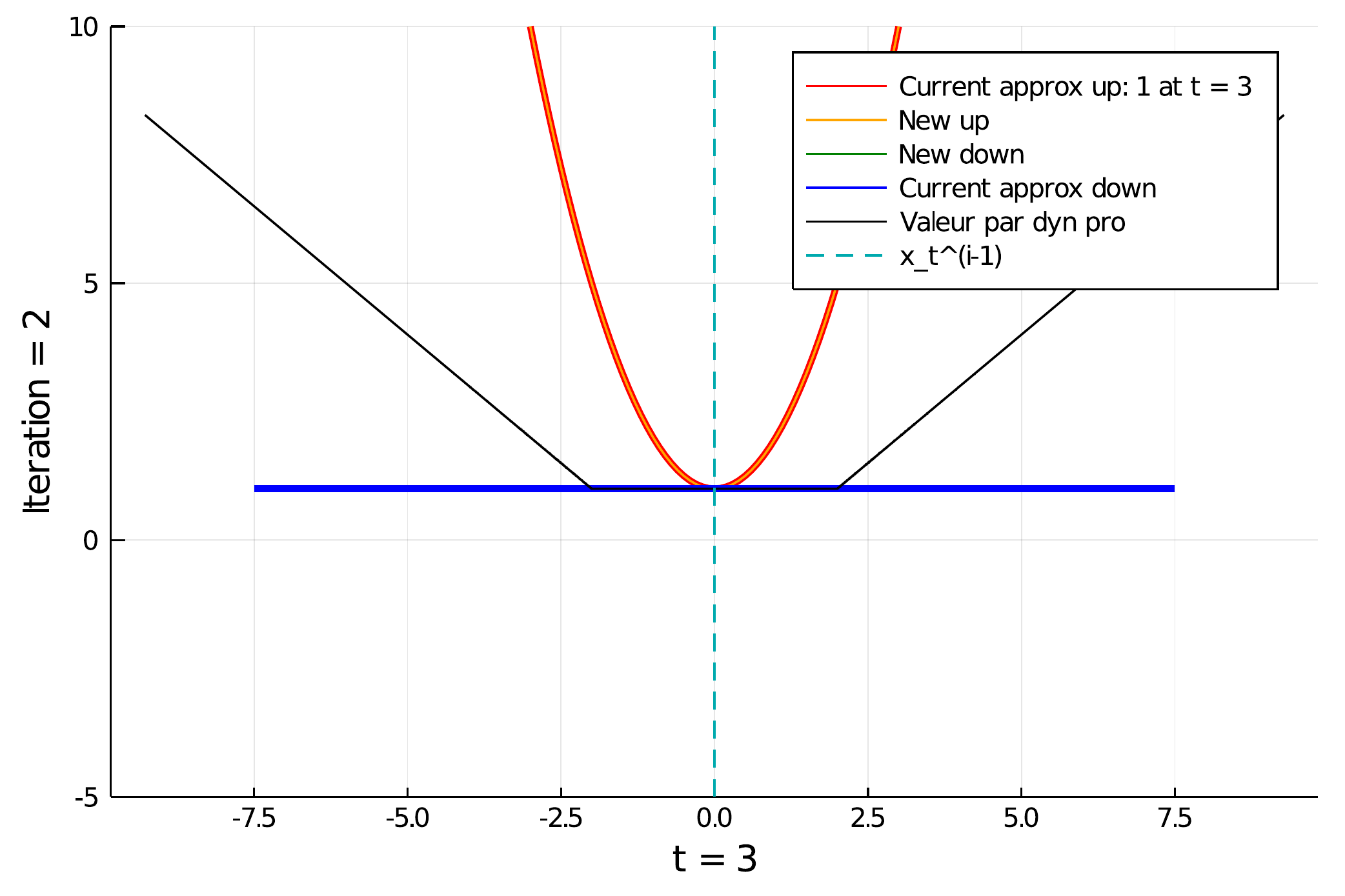}
  \end{subfigure}

  \begin{subfigure}[b]{0.30\textwidth}
   \includegraphics[width=\linewidth]{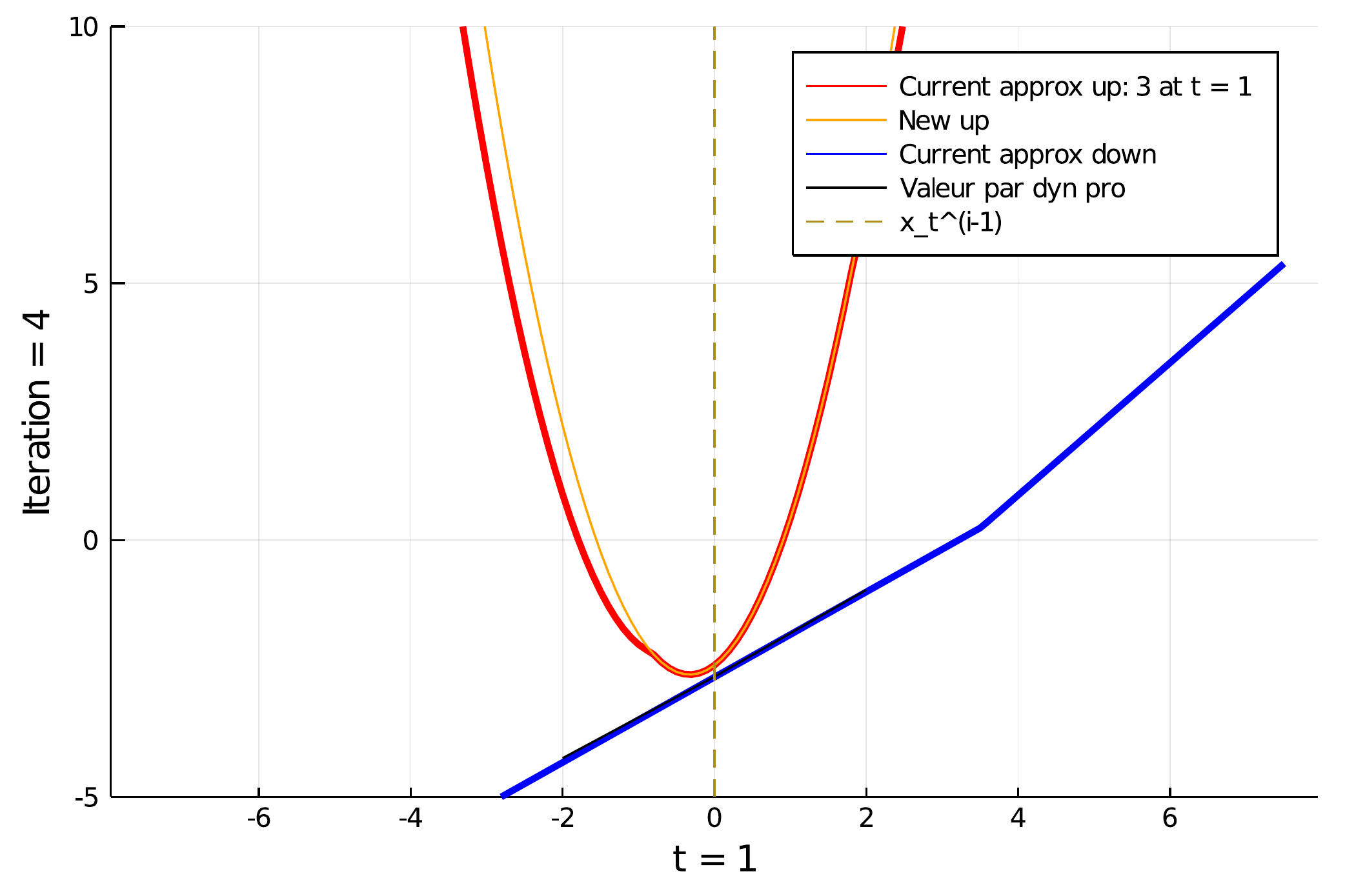}
  \end{subfigure}
  \begin{subfigure}[b]{0.3\textwidth}
   \includegraphics[width=\linewidth]{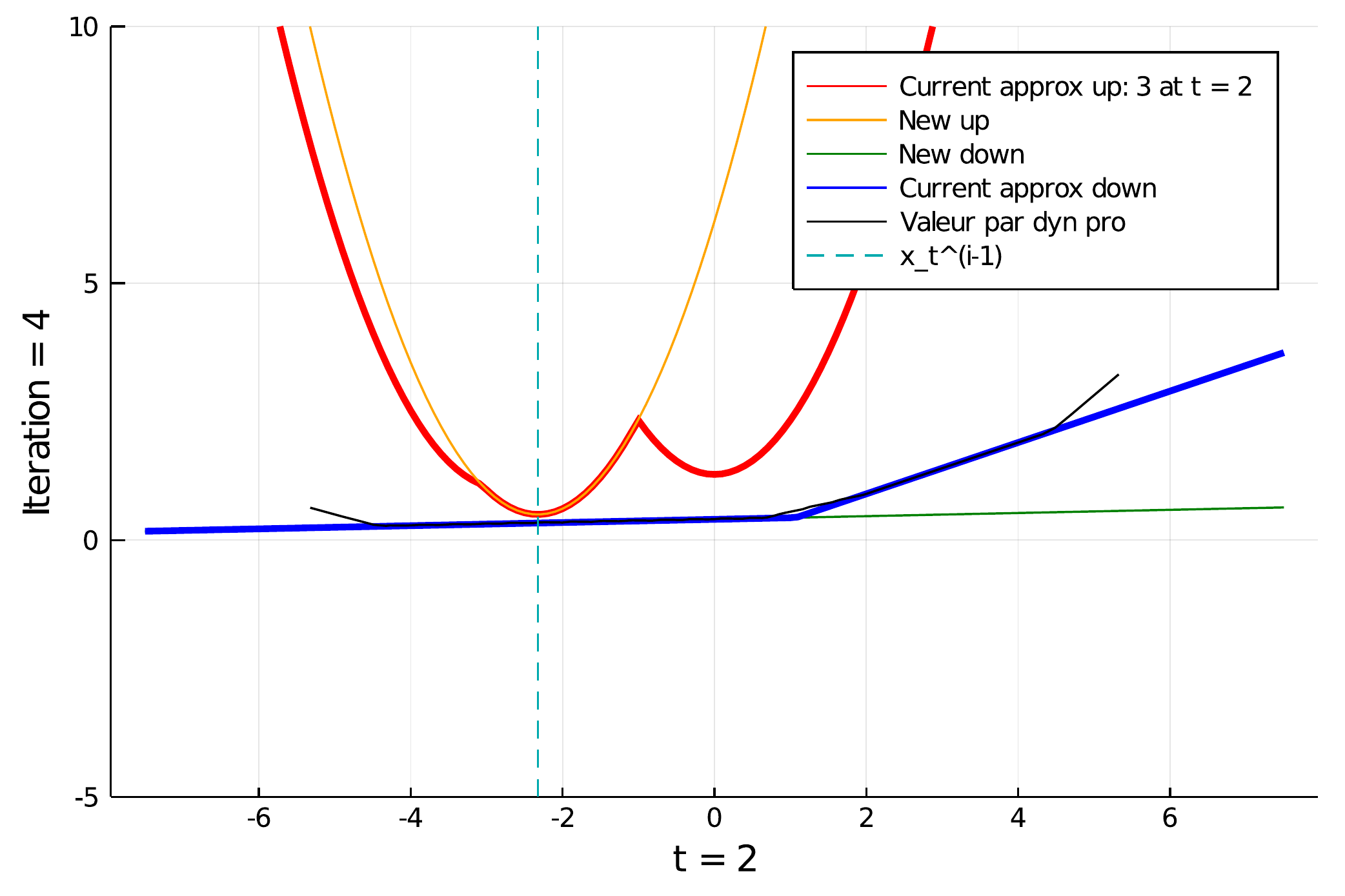}
  \end{subfigure}
  \begin{subfigure}[b]{0.3\textwidth}
   \includegraphics[width=\linewidth]{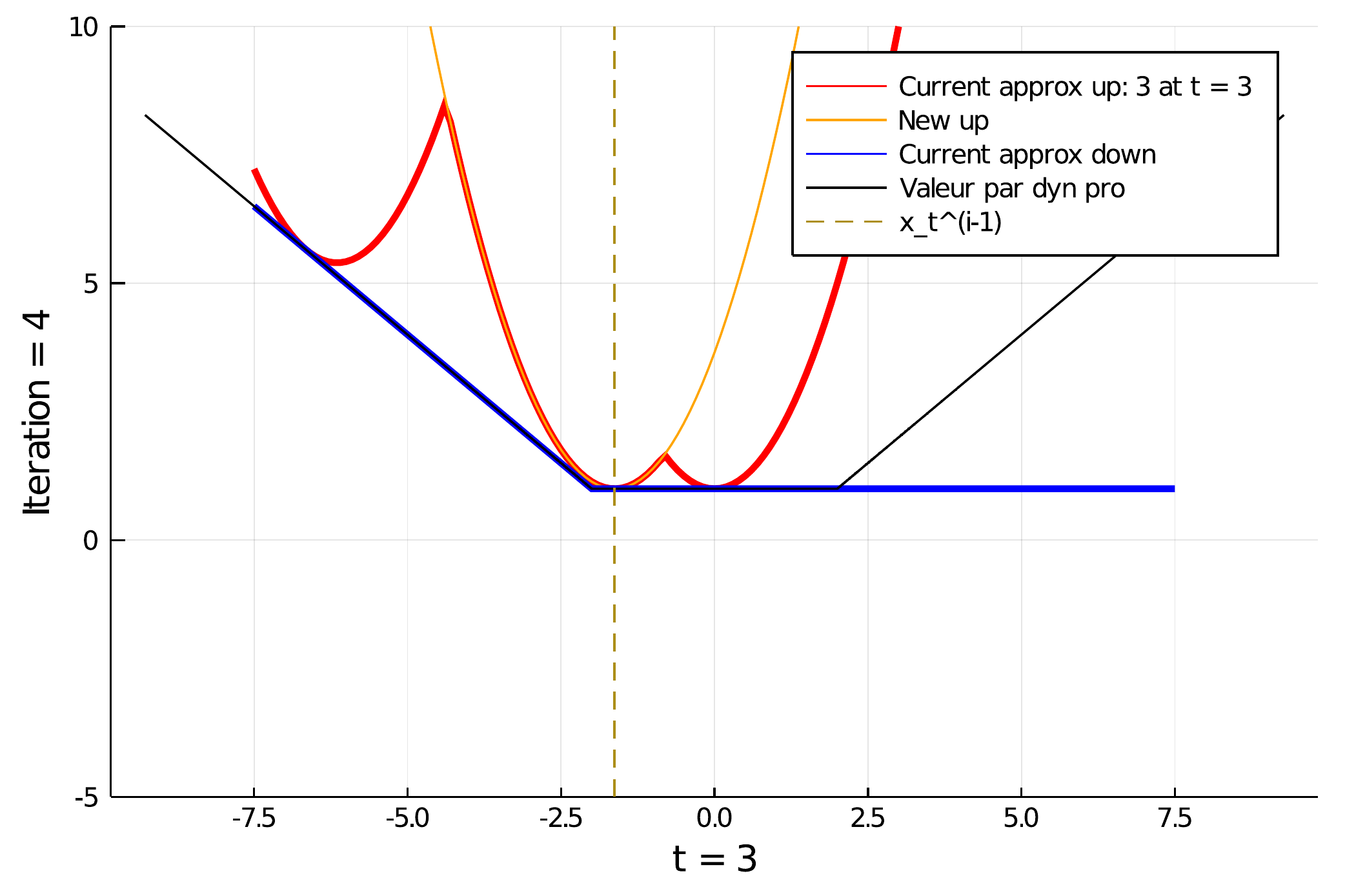}
  \end{subfigure}

  \begin{subfigure}[b]{0.30\textwidth}
   \includegraphics[width=\linewidth]{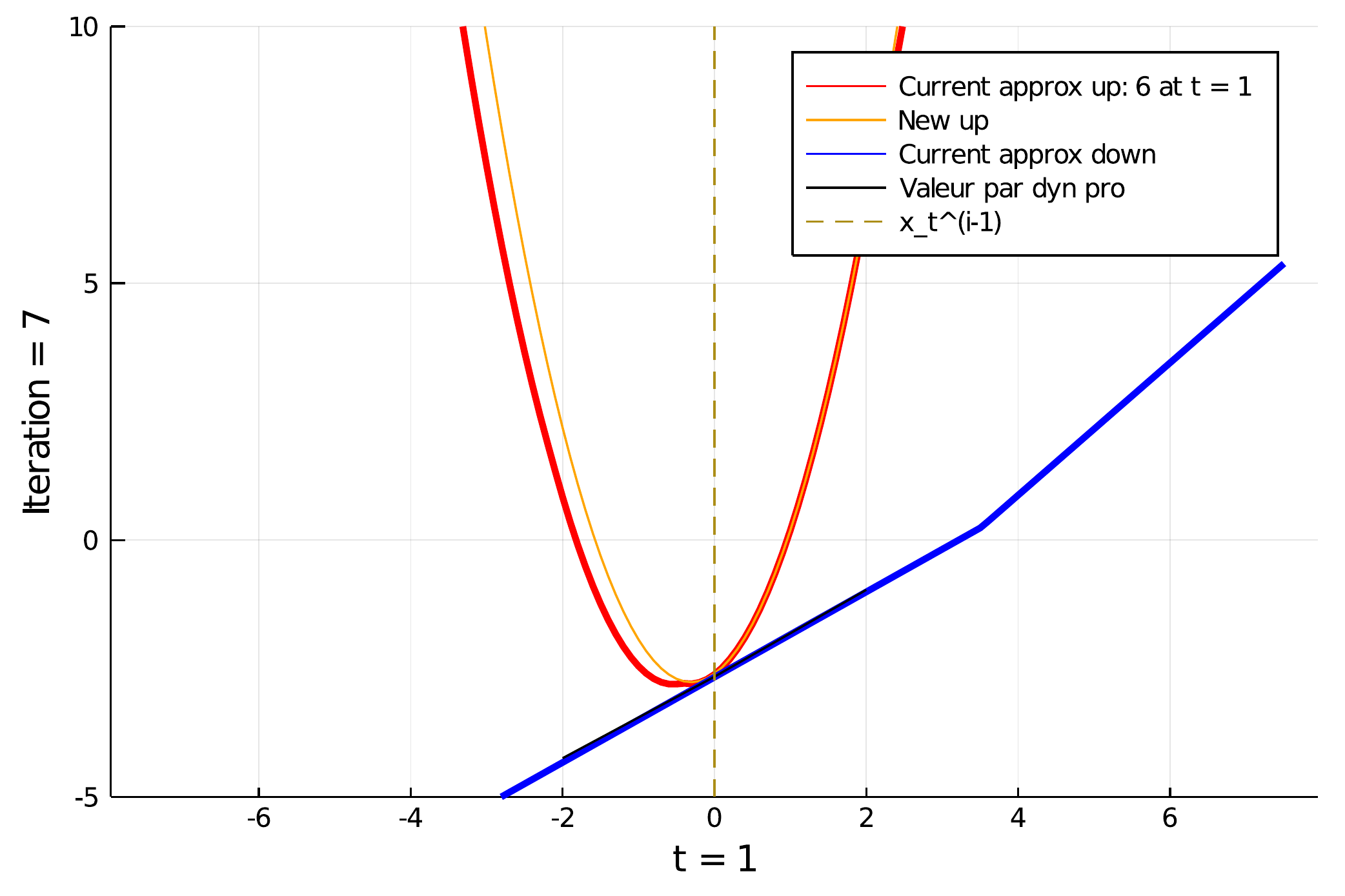}
  \end{subfigure}
  \begin{subfigure}[b]{0.3\textwidth}
   \includegraphics[width=\linewidth]{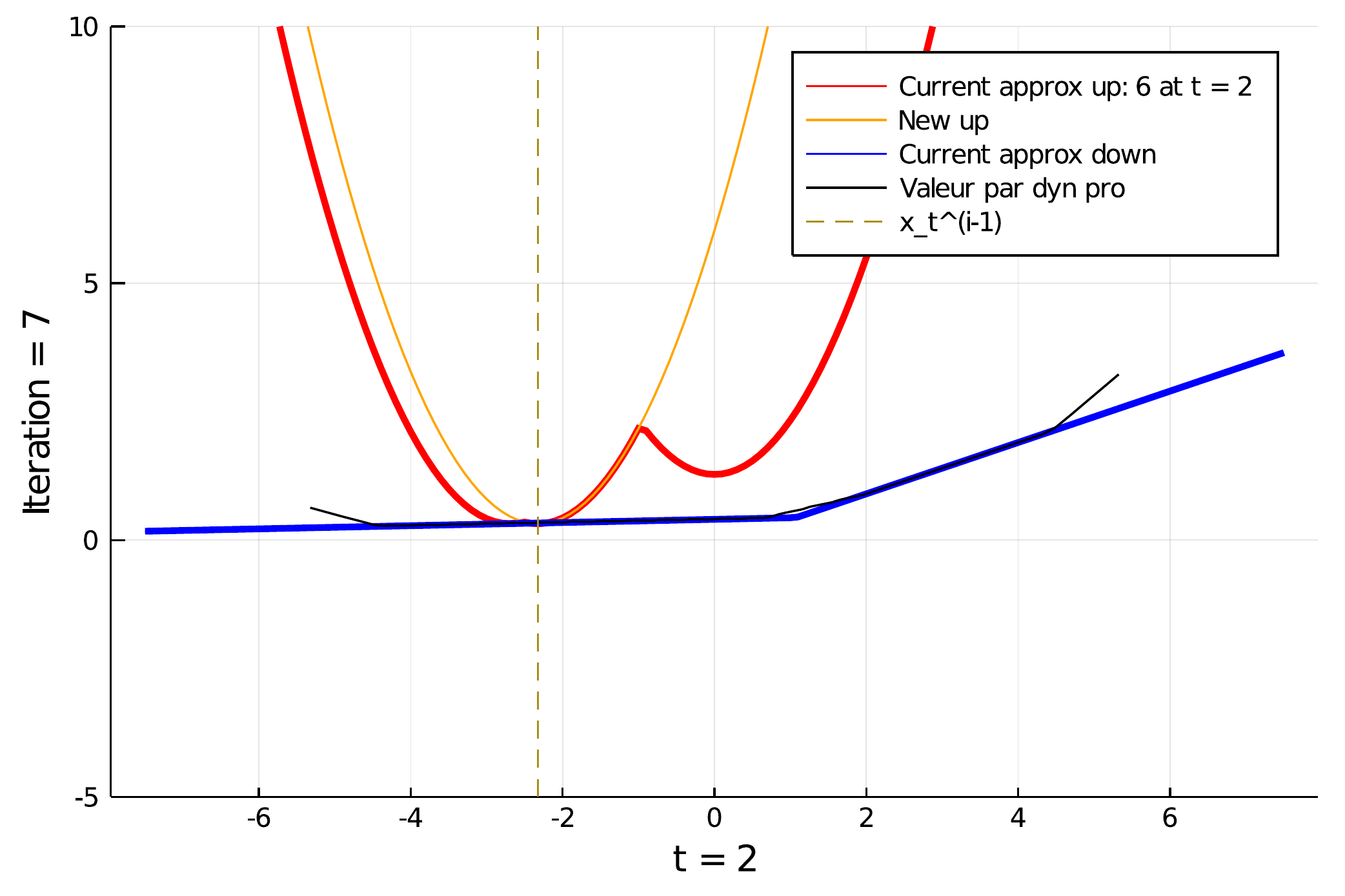}
  \end{subfigure}
  \begin{subfigure}[b]{0.3\textwidth}
   \includegraphics[width=\linewidth]{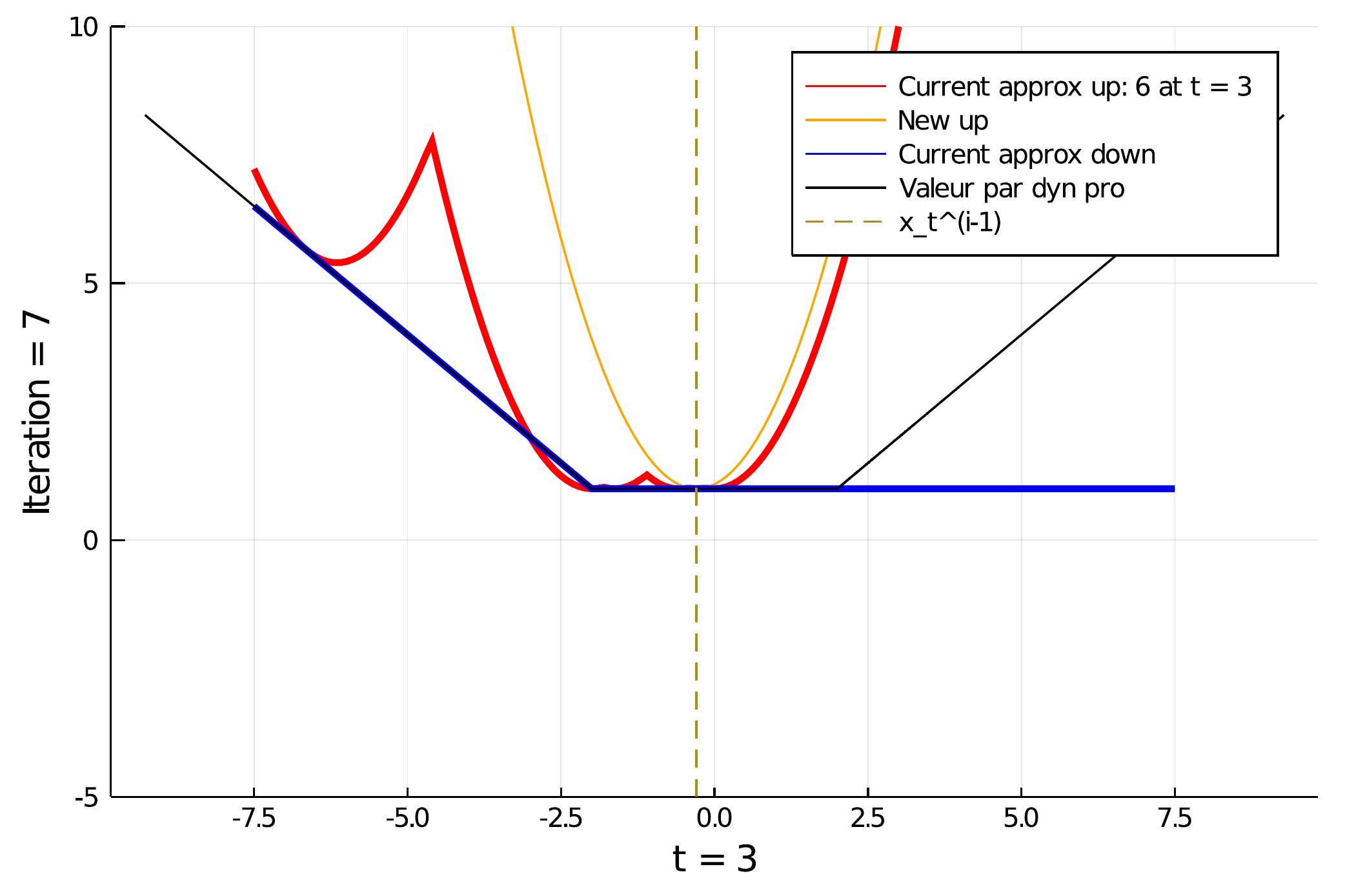}
  \end{subfigure}
 \end{center}
 \caption{\label{figure:U-SDDP} U-SDDP approximations of the value functions. In the bottom right we see that the $U$-shaped basic functions might not be valid when the trial point is associated with a kink of value function. Still, we observe that the gap between upper and lower approximations vanishes along the problem-child trajectory (in dashed lines).}
\end{figure}

\subsection{$V$-upper approximations}
\label{sec:V}
We have seen in \S\ref{sec:U} that $U$-shaped basic functions may not be suited
to approximate polyhedral functions. In \cite{Ph.de.Fi2013}, upper
approximations which were polyhedral as well were introduced. In this section we
propose upper approximations of $V_t$ as infima of $V$-shaped functions. Even
though when $V_t$ is polyhedral the approach of \cite{Ph.de.Fi2013} seems the
most natural, their approximations cannot be easily expressed as a pointwise infima
of basic functions.

In future works we will add a max-plus/min-plus projection
step to TDP in order to broaden the possibilities of converging approximations
available to the decision maker. In particular, polyhedral upper approximations
as in~\cite{Ph.de.Fi2013} will be covered.

In this section, by introducing a new tight and valid selection function, we
would like to emphasize on the flexibility already available to the decision
maker by adopting the framework of TDP.

We consider $V$-shaped functions, \emph{i.e.} functions of the form
$L \lVert x - a \rVert_1 + b$ with $a \in \X = \R^n$ and $b\in \R$ and a
constant $L>0$. We define for every time step $t\in \ce{0,T}$, the set of basic
functions
\[\uFuncb_{t}^{\mathrm{V}} := \Bset{ \frac{L_{V_t}}{\sqrt{n}} \norm{\cdot - a}_1 + b}
 { \np{a,b} \in \X{\times}\R }
 \eqfinp
\]

At time $t = T$, we compute a $V$-shaped function at $\psi\np{x}$, \emph{i.e.}
given a trial point $x \in X_T$, using the expression
$\overline{S}_{T}^{\mathrm{V}}\np{x} = \frac{L_{V_T}}{\sqrt{n}} \lVert \cdot - x
 \rVert_1 + \psi\np{x}$. For time $t \in\ic{0,T{-}1}$, the selection function is
given in Algorithm~\ref{V_Selection}. The main difference with the previous
cases treated in~\S\ref{sec:SDDP} and in~\S\ref{sec:U} is that $V$-shaped function are not
stable by averaging as the average of several $V$-shaped function is a polyhedral function.

\begin{algorithm}
 \caption{\label{V_Selection}V Selection function $\overline{S}_{t}^{\mathrm{V}}$ for $t<T$}
 \begin{algorithmic}
  \REQUIRE{A set of basic functions $\uFunc_{t+1} \subset \uFuncb_{t+1}^{\mathrm{V}}$ and a trial point $x_t \in X_t$.}
  \ENSURE{A tight and valid basic function $\ufunc_t \in \uFuncb_{t}^{\mathrm{V}}$.}
  \STATE{Solve by linear programming $b := \aB{\uvopt_{\uFunc_{t+1}}}{x_t}$.}
  \STATE{Set $\ufunc_t := \frac{L_{V_t}}{\sqrt{n}} \lVert \cdot - x_t \rVert + b$.}
 \end{algorithmic}
\end{algorithm}

\begin{proposition}[V Selection function]
 For every $t\in \ce{0,T}$, the mapping $\overline{S}_{t}^{\mathrm{V}}$
 described in Algorithm~\ref{V_Selection}
 is a selection function in the sense of Definition~\ref{CompatibleSelection}.
\end{proposition}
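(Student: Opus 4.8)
The plan is to verify directly, for every $t$, the two defining properties of \Cref{CompatibleSelection} — tightness and validity — treating $t=T$ as a base case and $t<T$ uniformly. Contrary to the SDDP and $U$ constructions, $V$-shaped functions are not stable by averaging, so I would \emph{not} route the argument through \Cref{pointwisetoaverage}; instead I would work directly with the averaged operator $\aB{}{}$, which is legitimate here since the single scalar $b := \aB{\uvopt_{\uFunc_{t+1}}}{x_t}$ is produced in one shot by the linear program of \Cref{V_Selection}. Tightness is then immediate: for $t<T$, evaluating $\ufunc_t = \frac{L_{V_t}}{\sqrt n}\norm{\cdot - x_t}_1 + b$ at the trial point $x_t$ annihilates the norm term, leaving $\ufunc_t(x_t) = b = \aB{\uvopt_{\uFunc_{t+1}}}{x_t}$, which is exactly the required equality; and for $t=T$ the same computation gives $\overline{S}_T^{\mathrm{V}}(x)(x) = \psi(x) = V_T(x)$.

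For validity I would first establish the regularity of the function to be dominated. Since $\uvopt_{\uFunc_{t+1}}$ is a pointwise infimum of basic functions, each $L_{V_{t+1}}$-Lipschitz on $X_{t+1}$, it is itself $L_{V_{t+1}}$-Lipschitz; \Cref{lipschitz_regularity} then yields that $\aB{\uvopt_{\uFunc_{t+1}}}{\cdot}$ is $L_{V_t}$-Lipschitz on $X_t$, with the very constant $L_{V_t}$ supplied by \Cref{Lipschitz_Vt}. Writing $g := \aB{\uvopt_{\uFunc_{t+1}}}{\cdot}$ and $b = g(x_t)$, validity reduces to the one-sided estimate
\[
 \forall x\in X_t\eqsepv\quad g(x) - g(x_t) \;\le\; \frac{L_{V_t}}{\sqrt n}\,\norm{x-x_t}_1 \eqfinp
\]
The base case $t=T$ needs the same inequality with $g=\psi$ and constant $L_{V_T}$, which is where one uses that $\psi$ is $L_{V_T}$-Lipschitz.

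The main obstacle is precisely this last estimate, because the Euclidean regularity furnished by \Cref{lipschitz_regularity} does not hand it over for free. The factor $\tfrac1{\sqrt n}$ is pinned down by the requirement that $\tfrac{L_{V_t}}{\sqrt n}\norm{\cdot-a}_1$ be an admissible basic function, i.e. $L_{V_t}$-Lipschitz for the Euclidean norm, since $\max_{\norm{y}=1}\norm{y}_1=\sqrt n$; so I cannot enlarge the coefficient. Yet the plain bound $g(x)-g(x_t)\le L_{V_t}\norm{x-x_t}$ combined with $\tfrac1{\sqrt n}\norm{x-x_t}_1\le\norm{x-x_t}$ only controls $g$ from above by $L_{V_t}\norm{x-x_t}$, which is \emph{larger} than the $\ell_1$-cone I must dominate by — the naive norm comparison points the wrong way. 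Closing this gap therefore requires upgrading the control on $g$ to an $\ell_1$-type (equivalently, $\ell_\infty$-gradient) bound of exactly the form displayed above, and this is the genuinely delicate step on which I would concentrate: I would isolate it as a standalone norm/regularity lemma and track how the constant $L_{V_t}$ of \Cref{Lipschitz_Vt} propagates through $\aB{}{}$ in the $\ell_1$ geometry. Granting that estimate, membership $\ufunc_t\in\uFuncb_t^{\mathrm{V}}$ is read off from its form, and tightness together with validity establish that $\overline{S}_t^{\mathrm{V}}$ is a selection function in the sense of \Cref{CompatibleSelection}.
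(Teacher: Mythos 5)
Your decomposition coincides with the paper's: tightness is the trivial evaluation of the $\ell_1$-cone at its apex, and validity is reduced --- via the Lipschitz propagation of Proposition~\ref{lipschitz_regularity} applied to the $L_{V_{t+1}}$-Lipschitz infimum $\uvopt_{\uFunc_{t+1}}$, together with min-additivity and Lemma~\ref{polypreserved} --- to the one-sided estimate $g\np{x}-g\np{x_t}\le \frac{L_{V_t}}{\sqrt n}\norm{x-x_t}_1$ for the polyhedral $L_{V_t}$-Lipschitz function $g=\aB{\uvopt_{\uFunc_{t+1}}}{\cdot}$ (with $g=\psi$ in the base case $t=T$). The genuine gap is that you stop precisely there: you state this estimate as the crux, explain why it does not follow from Euclidean Lipschitz continuity, and then write ``granting that estimate''. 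Validity is half of Definition~\ref{CompatibleSelection}, so as it stands the proposal does not prove the proposition.

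That said, your diagnosis of the obstruction is correct, and it points at the one step where the paper's own argument is shaky. The paper closes the base case by writing, for an affine piece $\langle c^i_T,\cdot\rangle+d^i_T$ of $\psi$, the chain $\langle c^i_T, x-x_T\rangle \le \norm{c^i_T}_2\,\norm{x-x_T}_2 \le L_{V_T}\frac{1}{\sqrt n}\norm{x-x_T}_1$. The second inequality requires $\norm{y}_2\le\frac{1}{\sqrt n}\norm{y}_1$, which is the reverse of the standard comparison $\frac{1}{\sqrt n}\norm{y}_1\le\norm{y}_2$ and fails for $n\ge 2$ (take $y=e_1$). What one actually gets from H\"older is $\langle c^i_T,y\rangle\le\norm{c^i_T}_\infty\norm{y}_1\le L_{V_T}\norm{y}_1$, i.e.\ validity of a $V$-cone with slope $L_{V_T}$ rather than $L_{V_T}/\sqrt n$; but, as you observe, the smaller slope is forced by the requirement that elements of $\uFuncb_t^{\mathrm V}$ be $L_{V_t}$-Lipschitz for the Euclidean norm, since $\max_{\norm{y}_2=1}\norm{y}_1=\sqrt n$. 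So the tension you isolate is real: the claimed validity holds when $n=1$ (where the two norms coincide, as in the paper's illustrations) or under the extra hypothesis $\norm{c^i_T}_\infty\le L_{V_T}/\sqrt n$ on the subgradients, but neither your proposal nor the paper's one-line Cauchy--Schwarz argument delivers it in general dimension. Your instinct to isolate this as a standalone norm lemma is the right one; a complete proof would have to either establish that lemma under an explicit additional assumption or modify the constant defining $\uFuncb_t^{\mathrm V}$.
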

\begin{proof}
 At time $t=T$, for every $x_T \in X_T$, we have $\overline{S}_{T}^{\mathrm{V}}\np{x_T} = \frac{L_{V_T}}{\sqrt{n}} \lVert \cdot - x_T \rVert_1 + \psi\np{x_T}$. Thus, $\overline{S}_{T}^{\mathrm{V}}\np{x_T}\np{x_T} = \psi\np{x_T}$ and $\overline{S}_{T}^{\mathrm{V}}$ is a tight mapping. As the polyhedral function $\psi(x) = \max_{i \in I_T} \langle c^{i}_T, x \rangle + d_T^i + \delta_{X_{T}}$ is $L_{V_T}$-Lipschitz continuous, by Cauchy-Schwarz inequality, for every $x \in X_T$ and $i\in I_T$, we have
 \[
  \langle c^{i}_T, x - x_T \rangle \leq \lVert c^{i}_T \rVert_2 \lVert x - x_T \rVert_2 \leq  L_{V_T} \frac{1}{\sqrt{n}}  \lVert x-x_T \rVert_1.
 \]
 Adding $\langle c^i_T, x_T \rangle + d_T^i$ on both sides of the last inequality and taking the maximum over $i \in I_T$ we have that
 \[
  \psi\np{x} = \max_{i \in I_T}  \langle c^i_T, x \rangle + d_T^i \leq  L_{V_T} \frac{1}{\sqrt{n}}  \lVert x-x_T \rVert_1 + \psi\np{x_T} = \overline{S}_{T}^{\mathrm{V}}\np{x_T}\np{x},
 \]
 which gives that $\overline{S}_{T}^{\mathrm{V}}$ is a valid mapping.

 Now, fix $t < T$, we show that the mapping $\overline{S}_{t}^{\mathrm{V}}$ is
 tight and valid as well. By construction, for every set of basic functions
 $\uFunc_{t+1} \subset \uFuncb_{t+1}^{\mathrm{U}}$ and trial point
 $x_t \in X_t$, we have
 \[
  \overline{S}_{t}^{\mathrm{V}}\np{\uFunc_{t+1}, x_t}\np{x_t} = b = \aB{\uvopt_{\uFunc_{t+1}}}{x_t}.
 \]
 Hence, $\overline{S}_{t}^{\mathrm{V}}$ is a tight mapping.

 We check that $\overline{S}_{t}^{\mathrm{V}}$ is a valid mapping.
 First, as each basic function $\func \in \uFunc_{t+1}$ is
 $L_{V_{t+1}}$-Lipschitz continuous on $X_t$, we show that
 $\uvopt_{\uFunc_{t+1}}$ is $L_{V_{t+1}}$-Lipschitz continuous on $X_t$ as
 well. Given $x_1, x_2 \in X_t$, we have
 \begin{align*}
  \lvert \uvopt_{\uFunc_{t+1}}\np{x_1} - \uvopt_{\uFunc_{t+1}}\np{x_2} \rvert & = \lvert \inf_{\func \in \uFunc_{t+1}} \func\np{x_1} - \inf_{\func \in \uFunc_{t+1}} \func\np{x_2} \rvert \\
                                                                              & \leq \sup_{\func \in \uFunc_{t+1}} \lvert \func\np{x_1} - \func\np{x_2} \rvert                            \\
                                                                              & \leq  L_{V_t} \lVert x_1 - x_2 \rVert.
 \end{align*}
 As the Bellman operator $\aB{}{}$ is Lipschitz regular in the sense of Proposition~\ref{lipschitz_regularity}, $\aB{\uvopt_{\uFunc_{t+1}}}{}$ is $L_{V_t}$-Lipschitz continuous.

 Second, by min-additivity of the Bellman operator $\aB{}{}$, we have that
 \[
  \aB{\uvopt_{\uFunc_{t+1}}}{x} = \aB{\inf_{\func \in \uFunc_{t+1}}\func}{x} = \inf_{\func \in \uFunc_{t+1}} \aB{\func}{x}.
 \]
 Recall that by Lemma~\ref{polypreserved}, the Bellman operator $\aB{}{}$ preserves polyhedrality. As $\func \in \uFunc_{t+1}$ is polyhedral, $\aB{\func}{}$ is polyhedral as well and as in the case $t=T$, \emph{mutatis mutandis} we have that $\overline{S}_{t}^{\mathrm{V}}$ is valid.
\end{proof}

\begin{figure}
 \begin{center}
  \begin{subfigure}[b]{0.30\textwidth}
   \includegraphics[width=\linewidth]{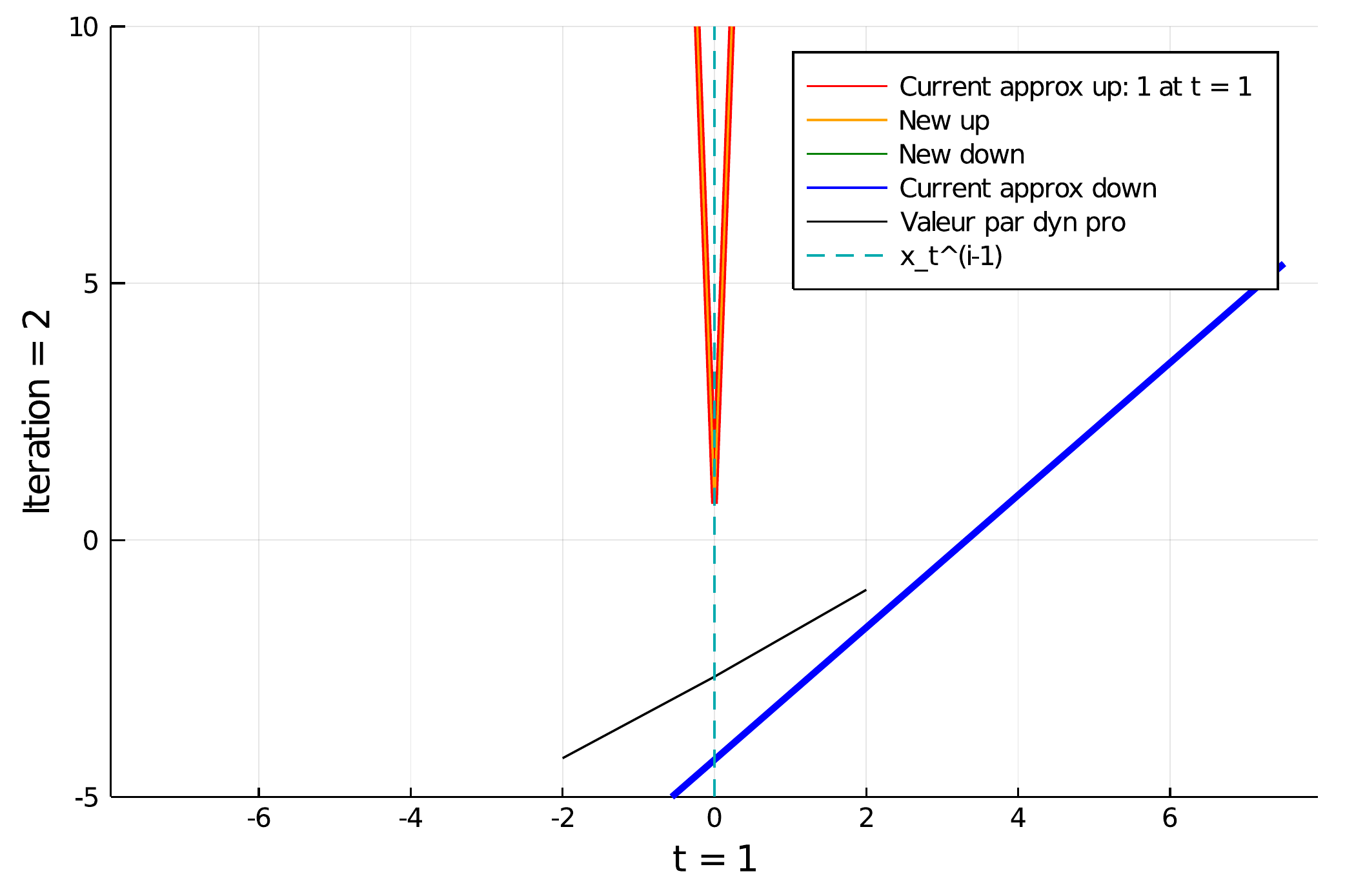}
  \end{subfigure}
  \begin{subfigure}[b]{0.3\textwidth}
   \includegraphics[width=\linewidth]{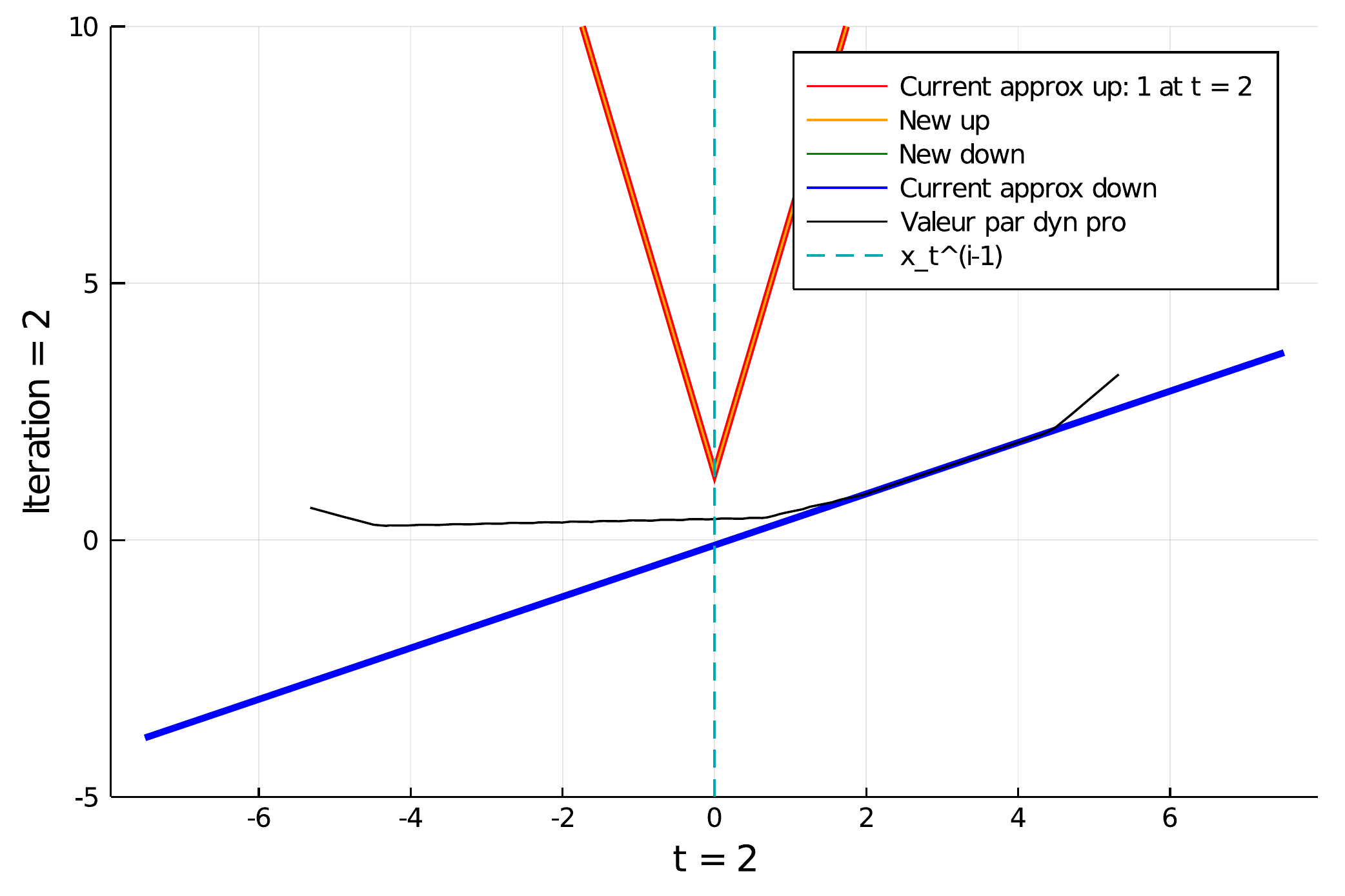}
  \end{subfigure}
  \begin{subfigure}[b]{0.3\textwidth}
   \includegraphics[width=\linewidth]{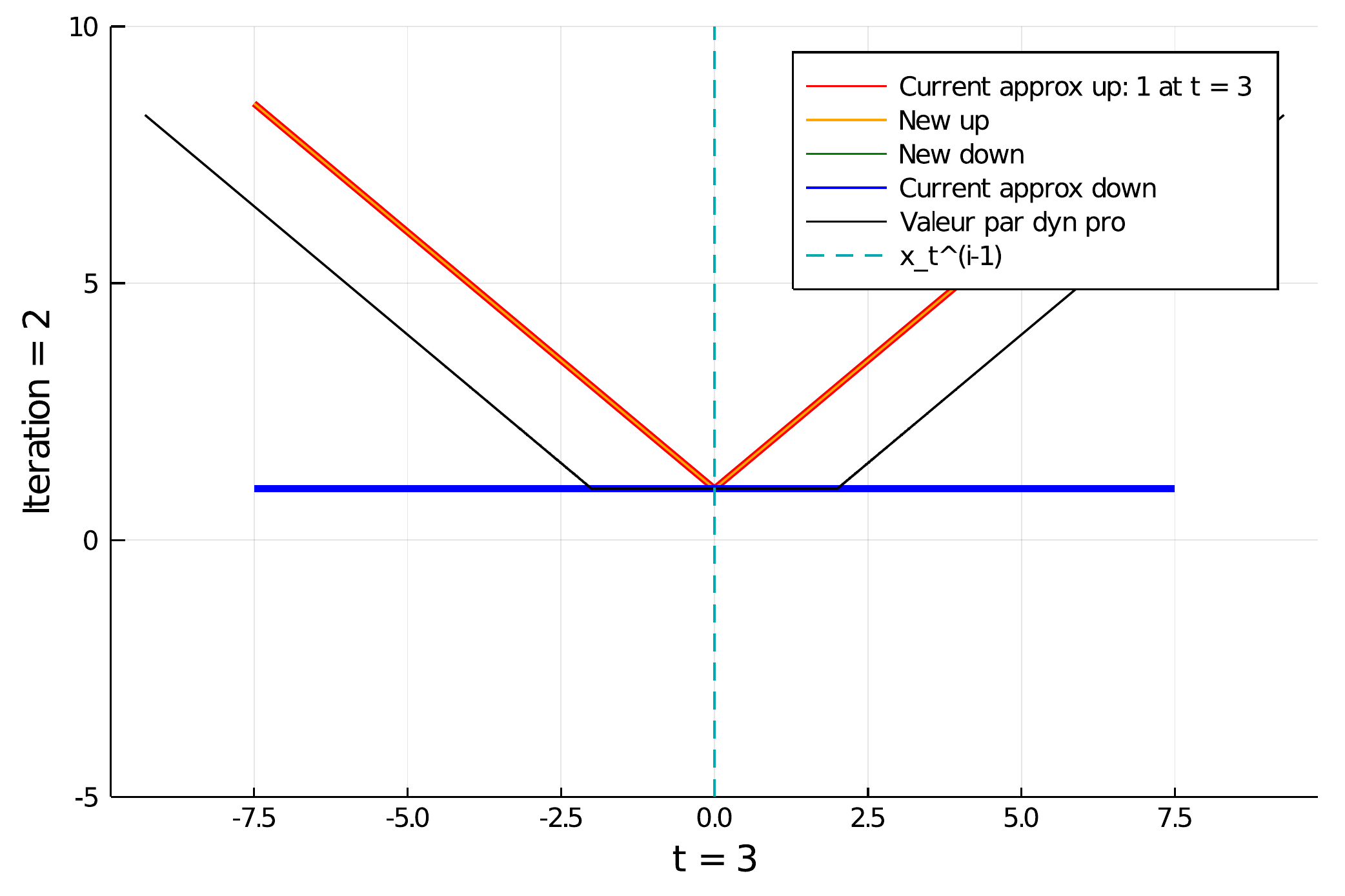}
  \end{subfigure}

  \begin{subfigure}[b]{0.30\textwidth}
   \includegraphics[width=\linewidth]{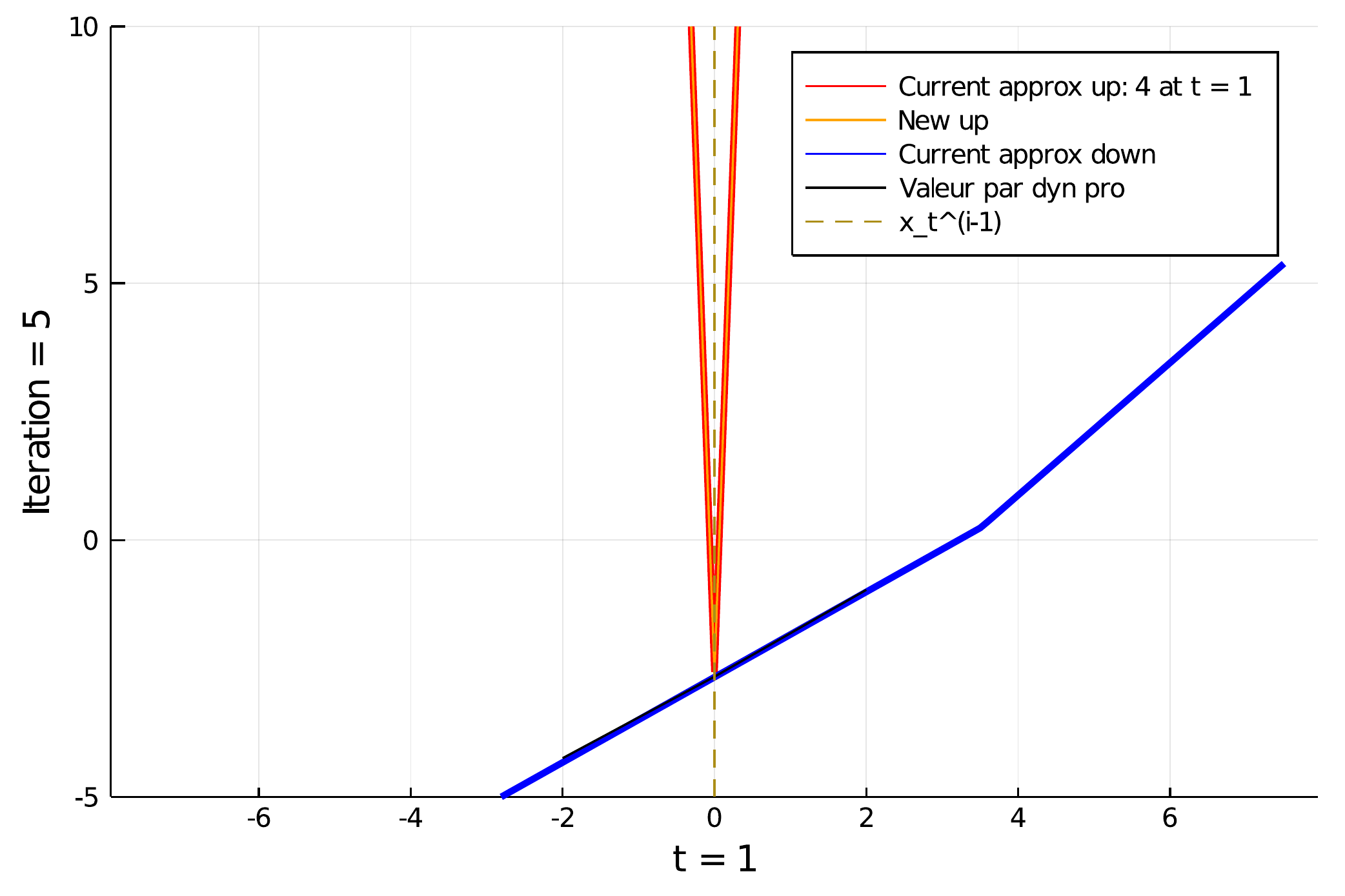}
  \end{subfigure}
  \begin{subfigure}[b]{0.3\textwidth}
   \includegraphics[width=\linewidth]{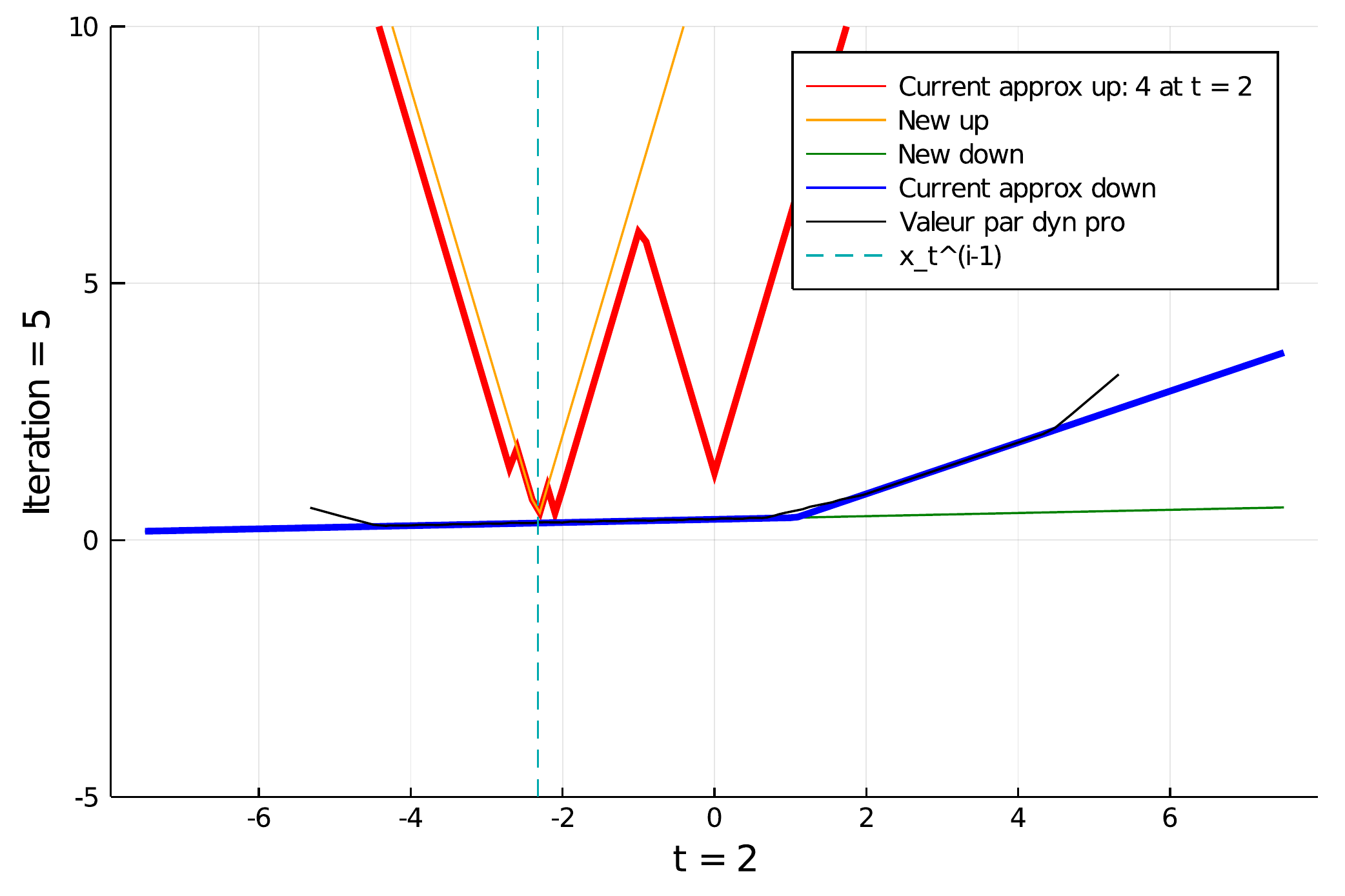}
  \end{subfigure}
  \begin{subfigure}[b]{0.3\textwidth}
   \includegraphics[width=\linewidth]{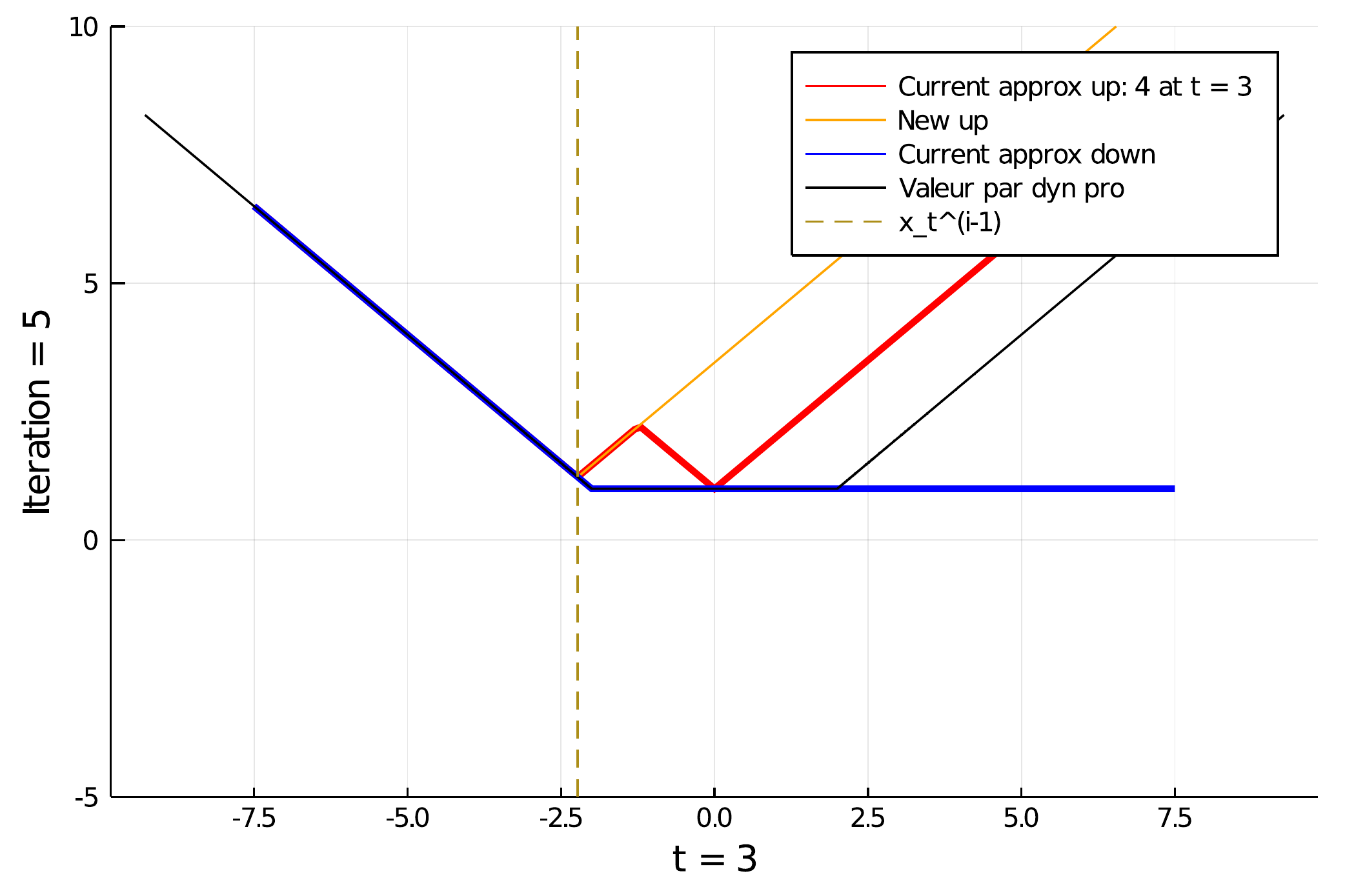}
  \end{subfigure}

  \begin{subfigure}[b]{0.30\textwidth}
   \includegraphics[width=\linewidth]{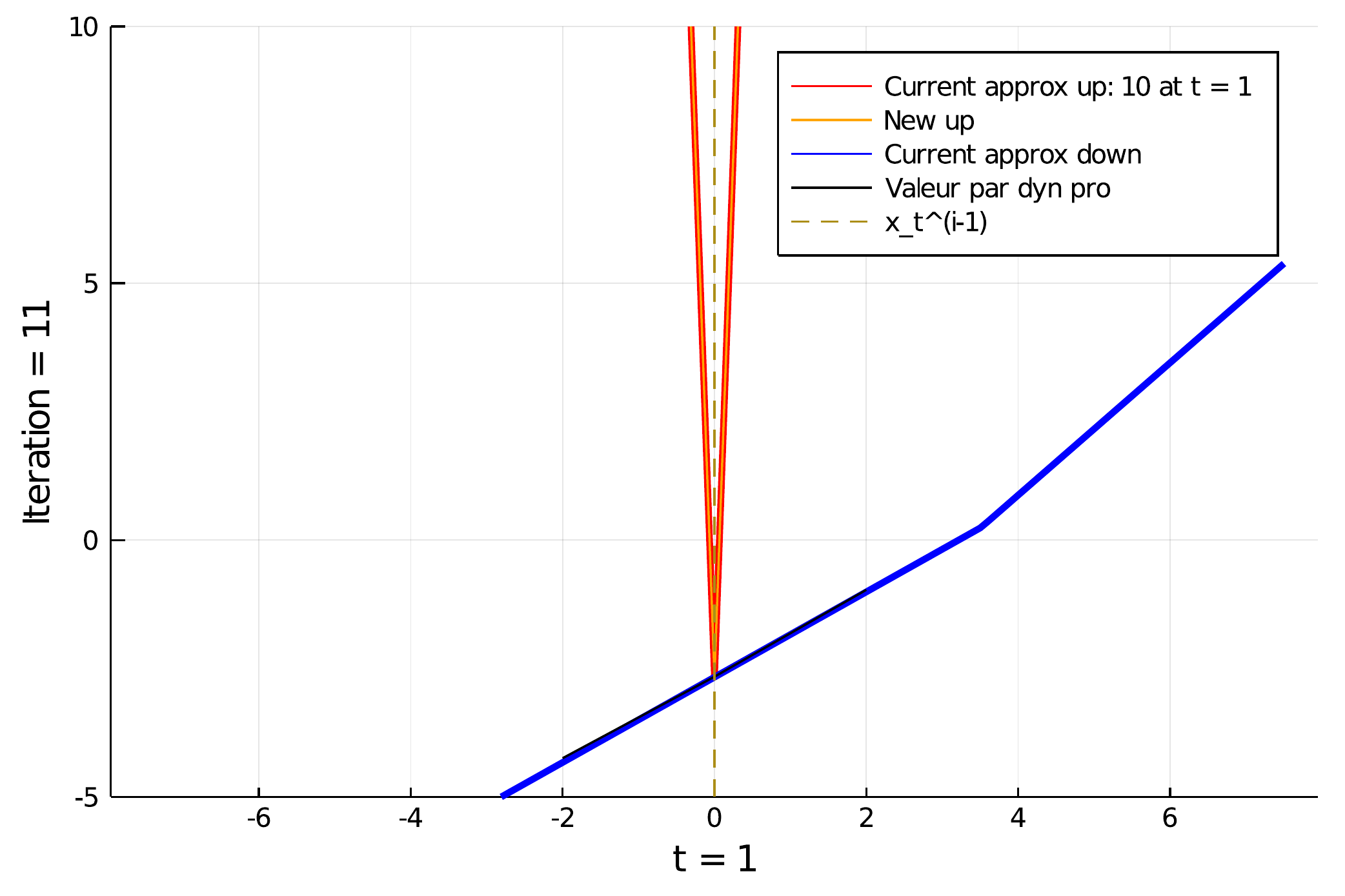}
  \end{subfigure}
  \begin{subfigure}[b]{0.3\textwidth}
   \includegraphics[width=\linewidth]{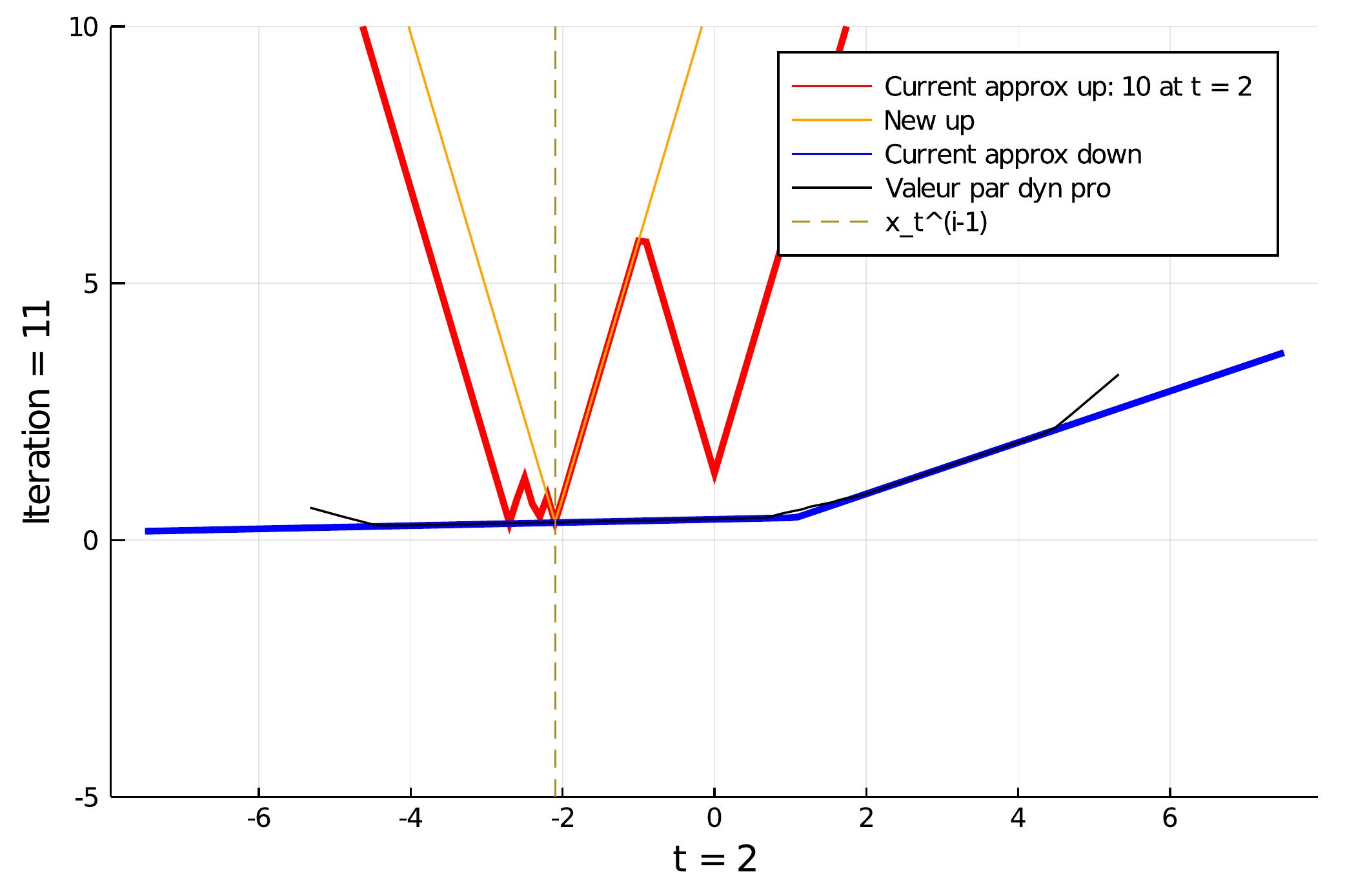}
  \end{subfigure}
  \begin{subfigure}[b]{0.3\textwidth}
   \includegraphics[width=\linewidth]{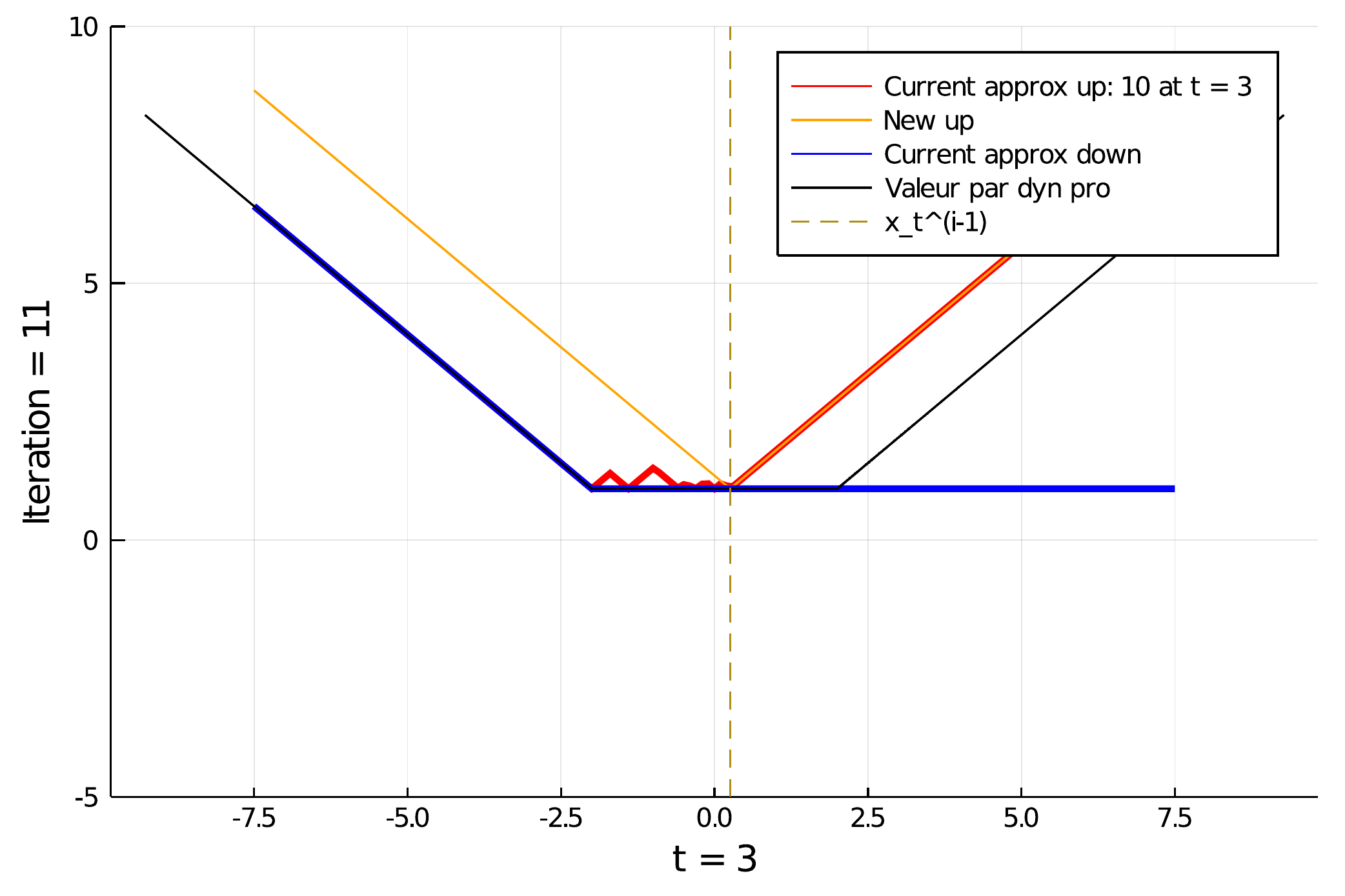}
  \end{subfigure}
 \end{center}
 \caption{V-SDDP approximations of the value functions. As the selection function $S_t^{\mathrm{V}}$ does not average other basic functions to compute a new one (compare with $S_t^{\mathrm{U}}$ or $S_t^{\mathrm{SDDP}}$), we lose the regularizing effect of averaging: the upper basic functions added are very sharp. We still observe that the gap between upper and lower approximations vanishes along the problem-child trajectory (in dashed lines).}
\end{figure}

\section*{Conclusion}

\begin{itemize}
 \item TDP generates simultaneously \emph{monotonic} approximations $\np{\lV_t^k}_k$ and $\np{\uV_t^k}_k$ of $V_t$.
 \item Each approximation is either a \emph{min-plus} or \emph{max-plus linear} combinations of basic functions.
 \item Each basic function should be \emph{tight} and \emph{valid}.
 \item The approximations are refined iteratively along the Problem-child trajectory \emph{without discretizing the state space}.
 \item The \emph{gap} between upper and lower approximation \emph{vanishes along the Problem-child trajectory}.
 \item TDP generalizes a similar approach done in \cite{Ph.de.Fi2013} and proved by \cite{Ba.Do.Za2018} for a variant of SDDP in convex MSPs.
\end{itemize}

\section*{Perpectives}

\begin{itemize}
 \item Consider an additional min-plus/max-plus projection step of suprema/infima of basic functions.
 \item Extensive numerical comparisons with existing methods, namely classical SDDP and the upper approximations obtained by Fenchel duality of \cite{Le.Ca.Ch.Le.Pa2018}.
 \item Extend the scope of TDP to encompass Partially Observed Markov Decision Processes. A first attempt to do so can be found in Appendix~\ref{TDP_POMDP}.
\end{itemize}


\appendix

\section{Tropical Dynamic Programming for POMDP}
\label{TDP_POMDP}
In this section, we present an on-going work to apply TDP on Partially Observed Markov Decision Processes (POMDP).

\subsection{Recalls on POMDP}

Formally, a POMDP is described (in the finite settings) by a finite set of states $\STATESPACE = \na{\state_1, . . . , \state_{|\STATESPACE|}}$,
a finite set of actions $\CONTROL = \na{\control_1, . . . , \control_{|\CONTROL|}}$, a finite set of observations
$\OBSERVATION = \na{\observation_1, . . . , \observation_{|\OBSERVATION|}}$, transition probabilities of the Markov chain
\begin{equation}
 \TransitionState{\control}{\state_i}{\state_j}{t}
 = \nprobc{\State_{t+1}=\state_j}{\State_t=\state_i,\Control_t=\control}
 \eqfinv
\end{equation}
and conditional law of the observations
\begin{equation}
 \ObservationLaw{\observation}{\state}{\control}{t+1}
 = \nprobc{\Observation_{t+1}=\observation}{\State_{t+1}=\state,\Control_t=\control}
 \eqfinv
\end{equation}
a real-valued cost function $L_t(\state, \control)$ for any $t \in \ic{0,T-1}$, a final cost $K(x)$
and an initial probability law in the simplex of $\RR^{|\STATESPACE|}$ called the initial belief $\belief_0$.
We assume here that the state space the control space and the observation space dimensions do not vary with time
but for the sake of clarity we will use the notation $\STATESPACE_t$ to designate the state space at time $t$ even if it is equal to $\STATESPACE$ and the same for control and observation states.

Under Markov assumptions, we can use at time $t$ a probability distribution
$\belief_t$, whose name is a reminder of belief, over current states as a
sufficient statistic for the history of actions and observations up to time
$t$. The space of beliefs is the simplex of $\RR^{|\STATESPACE|}$, denoted
$\Delta_{|\STATESPACE|}$. The belief dynamics, at time $t$, driven by action
$\control_t$ and observation $\observation_{t+1}$ is given by by the equation
\begin{align}
 \belief_{t+1}
  & = \tau_t \np{\belief_{t}, \control_t, \observation_{t+1}}
 \intertext{with $\belief_{t+1} \in \Delta_{|\STATESPACE|}$ given by} 
 \belief_{t+1}\np{\state_{t+1}}
  & =
 \beta_{t+1} \ObservationLaw{\observation_{t+1}}{\state_{t+1}}{\control_t}{t+1}
 \Bp{\sum_{x_t \in \STATESPACE_{t}} \belief\np{\state_t} \TransitionState{\control_t}{\state_t}{\state_{t+1}}{t}}
 \quad \forall \state_{t+1} \in  \STATESPACE_{t+1}
 \eqfinv
\end{align}
where $\beta_{t+1}$ is a normalization constant to ensure that
$\belief_{t+1}\in \Delta_{|\STATESPACE|}$, that is
\[
 \beta_{t+1}^{-1} = \sum_{\state_{t+1} \in \STATESPACE_{t+1}} \ObservationLaw{\observation_{t+1}}{\state_{t+1}}{\control_t}{t+1}
 \Bp{\sum_{x_t \in \STATESPACE_t} \TransitionState{\control_t}{\state_t}{\state_{t+1}}{t} \belief\np{\state_t}} \eqfinp
\]
To simplify the notation we introduce the (sub-stochastic) matrix defined as follows
\[
 {M^{\control_t, \observation_{t+1}}_t}\np{\state_t,\state_{t+1}} =
 \ObservationLaw{\observation_{t+1}}{\state_{t+1}}{\control_t}{t+1}
 \TransitionState{\control_t}{\state_t}{\state_{t+1}}{t}
 \quad \forall (\state_t,\state_{t+1}) \in \STATESPACE_t{\times}\STATESPACE_{t+1}
 \eqfinv
\]
where we have $\sum_{\observation_{t+1}} \sum_{\state_{t+1}} {M^{\control_t, \observation_{t+1}}_t}\np{\state_t,\state_{t+1}}=1$.
Using matrix notations, where beliefs are represented by row vector and $\mathbf{1}$ is a column vector full of ones,  we can rewrite the beliefs
dynamics as
\[
 \tau_t \np{\belief_{t}, \control_t, \observation_{t+1}} =
 \frac{ \belief_t {M^{\control_t, \observation_{t+1}}_t}}{  \belief_t {M^{\control_t, \observation_{t+1}}_t} \mathbf{1}}
 \in \Delta_{|\STATESPACE|}
 \eqfinp
\]

In general the object of the optimization problem is to generate a policy that minimizes expected finite horizon cost
for the controlled Markov chain $\na{\State_t^u}_{t\in \NN}$ with transition matrix $\TransitionStateName^{\control}$. That is
consider the minimization problem
\begin{equation}
 J(\belief_0) = \min_{\Control_1,\ldots,\Control_{T-1}} \Bespc{\sum_{t=0}^{T-1} L_t\np{\State_t,\Control_t} + K\np{\State_T}}
 {\belief_0}
 \eqfinp
 \label{pb:pomdp}
\end{equation}

It is classical to derive a Bellman equation for the beliefs given by the bellman operators for $t\in \ic{0,T-1}$
\begin{equation}
 \label{eq:bellman-pomdp}
 \B_t\np{V} = \inf_{\control \in \CONTROL} \B^{\control}_t \np {V}
 \eqfinv
\end{equation}
where for each $\control \in \CONTROL$ and $t \in \ic{0,T-1}$, the Bellman operator $\B^{\control}_{t}$ is defined by
\begin{align}
 \label{eq:bellman-u-pomdp}
 \B^{\control}_{t}\np{V}\np{\belief}
  & =
 \belief L_t^\control
 +  \sum_{\observation \in \OBSERVATION_{t+1}}
 \np{\belief M^{\control, \observation}_t \mathbf{1}}
 V
 \Bp{
  \frac{ \belief M^{\control, \observation}_t}
  {\belief M^{\control, \observation}_t \mathbf{1}}}
 \eqfinp
\end{align}
where $L_t^u$ is the column vector $\bp{L_t^u(\state_t)}_{\state_t \in \STATESPACE_t}$. Note that
the mapping $\observation_{t+1} \in \OBSERVATION_{t+1} \mapsto \np{\belief_t M^{\control_t, \observation_{t+1}}_t \mathbf{1}}$ is a
probability distribution on $\OBSERVATION_{t+1}$ ($\sum_{\observation \in \OBSERVATION_{t+1}} \belief_t M^{\control_t, \observation_{t+1}}_t \mathbf{1}=1$).

The Bellman operator can be also written as
\begin{align}
 \B^{\control}_{t}\np{V}\np{\belief}
  & =
 \belief L_t^u
 +  \sum_{\belief' \in \Delta_{|\STATESPACE|}} \overline{P}^u_t(\belief, \belief') V \np{\belief'}
 \eqfinv
\end{align}
where, $\overline{P}^u$ is a controlled Markov chain transition matrix in the belief space. Indeed
\begin{equation}
 \overline{P}^u_t(\belief, \belief')
 =
 \begin{cases}
  \np{\belief M^{\control, \observation}_t \mathbf{1}} & \text{when }  \belief' =
  \frac{ \belief_t {M^{\control, \observation}_t}}{  \belief_t {M^{\control, \observation}_t} \mathbf{1}} \text{ with } \observation \in \OBSERVATION_{t+1} \eqfinv
  \\
  0                                                    & \text{ if not}\eqfinv
 \end{cases}
\end{equation}
which is a classical Bellman equation of a controlled Markov chain but with a state space in the belief space.

We conclude this section by the following lemma
\begin{proposition} The value functions $\na{V_t}_{t\in \ic{0,T}}$ solutions of the Bellman Equation
 \begin{equation}
  \forall b \in\RR_{+}^{|\STATESPACE|}\quad {V}_T(b) = b K  \quad\text{and}\quad
  \forall t \in \ic{0,T{-}1}\quad
  {V}_t(b) = \inf_{\control \in \CONTROL} {\B}^{\control}_{t}\np{{V}_{t+1}}\np{b}
  \eqfinv
  \label{eq:bellman-eq}
 \end{equation}
 where the operator ${\B}^{\control}_{t}$ is given by Equation~\ref{eq:bellman-u-pomdp}
 are such that $V_0(b_0)$ is the optimal value of the minimization problem given by Equation~\ref{pb:pomdp}.
\end{proposition}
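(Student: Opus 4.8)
The plan is to recognise Problem~\eqref{pb:pomdp} as a fully observed Markov decision problem living on the belief simplex $\Delta_{|\STATESPACE|}$, and then to invoke the classical dynamic programming principle for that reformulated problem. The entire argument rests on the fact that the belief $\belief_t$ is a \emph{sufficient statistic} for the observed history: it coincides with the conditional law of the hidden state $\State_t$. I would therefore organise the proof around two complementary inductions, a \emph{filtering identity} proved by forward induction in $t$, and the \emph{Bellman recursion} proved by backward induction in $t$.

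First I would establish the filtering identity. Writing $h_t = \np{\control_0, \observation_1, \ldots, \control_{t-1}, \observation_t}$ for the history available just before choosing the control at time $t$, I claim that the conditional distribution of $\State_t$ given $h_t$ (and the initial belief $\belief_0$) is exactly the row vector $\belief_t$ produced by the dynamics $\tau_t$. This is a one-line Bayes computation: conditionally on $h_t$ and on the chosen control $\control$, the joint law of $\np{\State_{t+1}, \observation_{t+1}}$ factors through the sub-stochastic matrix $M^{\control, \observation_{t+1}}_t$, and renormalising its $\observation_{t+1}$-slice reproduces precisely the update $\belief_{t+1} = \belief_t M^{\control, \observation_{t+1}}_t / \bp{\belief_t M^{\control, \observation_{t+1}}_t \mathbf{1}}$ while the normalising mass yields the predictive observation law $\nprobc{\observation_{t+1} = \observation}{h_t, \control} = \belief_t M^{\control, \observation}_t \mathbf{1}$. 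Two consequences follow immediately and will be used repeatedly: the expected running cost reads $\E\nc{L_t\np{\State_t, \control} \mid h_t} = \belief_t L_t^{\control}$, and the belief process $\np{\belief_t}$ is itself a controlled Markov chain with the transition kernel $\overline{P}^{\control}_t$ displayed above.

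Then I would carry out the backward induction on the cost-to-go. For $t \in \ic{0,T}$ let $V_t\np{\belief}$ denote the optimal conditional expected cost incurred from time $t$ on, given $\belief_t = \belief$. The terminal case is $V_T\np{\belief} = \E\nc{K\np{\State_T} \mid \belief_T = \belief} = \belief K$, which is the boundary condition of~\eqref{eq:bellman-eq}. For the inductive step I would condition by the tower property first on $\np{\belief_t, \control}$ and then on $\observation_{t+1}$: by the filtering identity the immediate term contributes $\belief L_t^{\control}$, whereas the continuation term contributes $\sum_{\observation} \bp{\belief M^{\control, \observation}_t \mathbf{1}}\, V_{t+1}\bp{\belief M^{\control, \observation}_t / \np{\belief M^{\control, \observation}_t \mathbf{1}}}$; these are exactly the two summands of $\B^{\control}_t\np{V_{t+1}}\np{\belief}$ in~\eqref{eq:bellman-u-pomdp}. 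Minimising over $\control \in \CONTROL$ gives $V_t = \inf_{\control} \B^{\control}_t\np{V_{t+1}} = \B_t\np{V_{t+1}}$, and specialising to $t=0$, $\belief = \belief_0$ yields $V_0\np{\belief_0} = J\np{\belief_0}$.

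The delicate point, and the one I would treat with most care, is the reduction from arbitrary history-dependent admissible policies to belief-feedback policies: \emph{a priori} the infimum in~\eqref{pb:pomdp} ranges over all policies measurable with respect to the observed history, whereas the recursion above tacitly optimises over belief feedbacks. The clean remedy is to prove, again by backward induction, that the history-dependent value depends on $h_t$ only through $\belief_t$; this is possible precisely because both the running cost $\belief_t L_t^{\control}$ and the one-step belief transition depend on $h_t$ solely via $\belief_t$, so that the minimised right-hand side at every stage is a function of $\belief_t$ alone. Granting this, a measurable-selection argument produces an optimal belief-feedback policy and identifies the two values, completing the proof.
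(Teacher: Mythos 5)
The paper does not actually prove this proposition: it is stated without proof, the surrounding text treating the belief-state reformulation of a POMDP as classical. Your proposal therefore cannot be checked against an ``official'' argument, but on its own terms it is correct and is the standard one: (i) the filtering identity showing that $\belief_t$ is the conditional law of $\State_t$ given the observed history, with the normalising mass $\belief_t M^{\control,\observation}_t \mathbf{1}$ being the predictive law of the next observation; (ii) the tower-property backward induction identifying the optimal cost-to-go with $\inf_{\control}\B^{\control}_t\np{V_{t+1}}$; (iii) the reduction from history-dependent policies to belief-feedback policies, which is indeed the only delicate point and which your induction on ``the value depends on $h_t$ only through $\belief_t$'' handles correctly. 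Two small remarks. First, since $\CONTROL$, $\STATESPACE$ and $\OBSERVATION$ are all finite here, the measurable-selection argument you invoke at the end is vacuous: the infimum over $\control$ is a minimum over a finite set, so an optimal belief-feedback policy exists by inspection, and flagging this keeps the proof elementary. Second, the update $\tau_t$ and the term $V_{t+1}\bp{\belief M^{\control,\observation}_t/\belief M^{\control,\observation}_t\mathbf{1}}$ are undefined when $\belief M^{\control,\observation}_t\mathbf{1}=0$; the corresponding summand carries zero weight so the Bellman operator is still well defined, but the conditioning in your filtering identity should be restricted to observations of positive predictive probability for the Bayes computation to make sense.
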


\subsection{The Bellman operator defined in Equation~\eqref{eq:bellman-pomdp} propagate Lipschitz mappings}

\begin{proposition} For $t\in \ic{0,T{-}1}$, assume that the mappings $L_t(\control,\cdot)$
 satisfy $\norm{L_t(\control,\cdot)}_{\infty} \le {\cal L}$\footnote{Since the state space if finite we
  identify mappings $\phi:\STATESPACE\to \RR$ with vectors in $\RR^{|\STATESPACE|}$}
 for all $\control \in \CONTROL$ and assume that a mapping $K$ satisfy $\sup_{\state \in \STATESPACE} |K(x)|= {\cal K}< + \infty$.
 Then the solution of the Bellman Equation~\eqref{eq:bellman-eq} are Lipschitz mappings.
\end{proposition}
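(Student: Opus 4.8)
The plan is to proceed by backward induction on $t$, the essential device being the positively homogeneous degree-one extension of the value functions, which absorbs the troublesome normalization appearing in the belief dynamics. For the base case $t=T$, the value function $V_T(\belief)=\belief K$ is linear in $\belief$, hence $\mathcal{K}$-Lipschitz on the simplex $\Delta_{|\STATESPACE|}$ for $\norm{\cdot}_1$, since $|\belief K-\belief' K|\le \norm{\belief-\belief'}_1\,\norm{K}_\infty=\mathcal{K}\norm{\belief-\belief'}_1$, and bounded by $\mathcal{K}$. For the inductive step I assume $V_{t+1}$ is $L_{t+1}$-Lipschitz (hence, on the compact simplex, bounded by some $C_{t+1}$) on $\Delta_{|\STATESPACE|}$.

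The first move is to introduce the extension $W_{t+1}:\R_+^{|\STATESPACE|}\to\R$ defined by $W_{t+1}(v)=(v\mathbf{1})\,V_{t+1}\bp{v/(v\mathbf{1})}$ for $v\neq 0$ and $W_{t+1}(0)=0$. By positive homogeneity of degree one, each summand in $\B^{\control}_t(V_{t+1})$ rewrites as $(\belief M^{\control,\observation}_t\mathbf{1})\,V_{t+1}\bp{\belief M^{\control,\observation}_t/(\belief M^{\control,\observation}_t\mathbf{1})}=W_{t+1}(\belief M^{\control,\observation}_t)$, so that
\[
 \B^{\control}_t(V_{t+1})(\belief)=\belief L^{\control}_t+\sum_{\observation\in\OBSERVATION_{t+1}}W_{t+1}\bp{\belief M^{\control,\observation}_t}\eqfinp
\]
The normalization has disappeared: the right-hand side is now a linear term plus a finite sum of compositions of the \emph{linear} maps $\belief\mapsto\belief M^{\control,\observation}_t$ with $W_{t+1}$.

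The crux is therefore a lemma asserting that the homogeneous extension $W_{t+1}$ of a bounded Lipschitz function on $\Delta_{|\STATESPACE|}$ is globally Lipschitz on $\R_+^{|\STATESPACE|}$. I would prove it directly: writing $n_i=v_i\mathbf{1}=\norm{v_i}_1$ and $p_i=v_i/n_i\in\Delta_{|\STATESPACE|}$, one has
\[
 |W_{t+1}(v_1)-W_{t+1}(v_2)|\le n_1\,|V_{t+1}(p_1)-V_{t+1}(p_2)|+|n_1-n_2|\,|V_{t+1}(p_2)|\eqfinp
\]
The only delicate quantity is $n_1\norm{p_1-p_2}_1$; the identity $n_1(p_1-p_2)=(v_1-v_2)+(1-n_1/n_2)v_2$ gives $n_1\norm{p_1-p_2}_1\le\norm{v_1-v_2}_1+|n_2-n_1|\le 2\norm{v_1-v_2}_1$, whence $W_{t+1}$ is $(2L_{t+1}+C_{t+1})$-Lipschitz. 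This boundary estimate near $v=0$ is the main obstacle I expect, since a naive bound on the belief transition $\belief\mapsto\belief M^{\control,\observation}_t/(\belief M^{\control,\observation}_t\mathbf{1})$ blows up as $\belief M^{\control,\observation}_t\mathbf{1}\to 0$; the homogeneous extension is exactly what tames it.

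Finally I would assemble the pieces. The map $\belief\mapsto\belief L^{\control}_t$ is linear with Lipschitz constant at most $\mathcal{L}$ (using $\norm{L_t(\control,\cdot)}_\infty\le\mathcal{L}$), and each $\belief\mapsto\belief M^{\control,\observation}_t$ is linear with operator norm at most $1$ (the matrices being substochastic); composing the latter with the Lipschitz $W_{t+1}$ and summing over the finite set $\OBSERVATION_{t+1}$ shows $\B^{\control}_t(V_{t+1})$ is Lipschitz with a constant that is bounded uniformly in $\control$ over the finite control set $\CONTROL$. Since $V_t=\inf_{\control\in\CONTROL}\B^{\control}_t(V_{t+1})$ is a finite infimum of uniformly Lipschitz functions, it is Lipschitz with that same constant, closing the induction.
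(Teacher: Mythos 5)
Your proof is correct, and it reaches the conclusion by a genuinely different route than the paper, even though both arguments hinge on the same underlying idea of degree-one positive homogenization to absorb the normalization $\belief M^{\control,\observation}_t\mathbf{1}$. The paper does not extend the value functions themselves: it defines a \emph{separate} unnormalized Bellman recursion $\widetilde{V}_t(c)=\inf_{\control}\bp{cL_t^{\control}+\sum_{\observation}\widetilde{V}_{t+1}(cM_t^{\control,\observation})}$ on the cone $\R_+^{|\STATESPACE|}$, checks by induction that its solutions are $1$-homogeneous, proves that this operator maps a $\mathcal{V}$-Lipschitz function to an $(\mathcal{L}+\mathcal{V})$-Lipschitz one by exploiting the \emph{joint} stochasticity $\sum_{\observation,\state'}M_t^{\control,\observation}(\state,\state')=1$ (so the sum over observations costs nothing), and only then identifies $\widetilde{V}_t$ with $V_t$ on the simplex; this yields the clean constant $\mathcal{L}(T-t)+\mathcal{K}$. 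You instead stay on the simplex, and your key lemma --- that the $1$-homogeneous extension $W_{t+1}(v)=(v\mathbf{1})V_{t+1}\bp{v/(v\mathbf{1})}$ of a bounded $L_{t+1}$-Lipschitz function is $(2L_{t+1}+C_{t+1})$-Lipschitz on the cone, via the identity $n_1(p_1-p_2)=(v_1-v_2)+(1-n_1/n_2)v_2$ --- is a statement the paper never needs and never proves; it is correct and nicely self-contained. The price is a cruder constant: bounding each substochastic map separately by operator norm $1$ and summing gives a factor $|\OBSERVATION_{t+1}|(2L_{t+1}+C_{t+1})$ per stage, which grows with the number of observations and with $\sup|V_{t+1}|$, whereas the paper's single inequality over the pair $(\observation,\state')$ avoids both. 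Since the proposition only asserts Lipschitz continuity and the horizon is finite, your induction closes all the same; if you wanted the sharper constant you would replace the per-observation operator-norm bound by the observation-summed estimate $\sum_{\observation}\norm{(v_1-v_2)M_t^{\control,\observation}}_1\le\norm{v_1-v_2}_1$.
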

\begin{proof}\quad

 \noindent $\bullet$ We consider the operator $\widetilde{\B}^{\control}_{t}$
 defined for mappings $\widetilde{V}: \RR_{+}^{|\STATESPACE|} \to \RR$ by
 \begin{align}
  \widetilde{\B}^{\control}_{t}\np{\widetilde{V}}\np{c}
   & =
  c L_t^\control
  +  \sum_{\observation \in \OBSERVATION_{t+1}}
  \widetilde{V}
  \bp{c M^{\control, \observation}_t} \quad \forall c \in \RR^{|\STATESPACE|}
  \eqfinv
 \end{align}
 where $L^u_t$ stands for the column vector $\np{L_t(x,u)}_{x \in \STATESPACE_t}$
 and we recall that beliefs are row vectors. We consider $\na{\widetilde{V_t}}_{t\in \ic{0,T}}$ solution of the
 Bellman Equation
 \begin{equation}
  \forall c \in\RR_{+}^{|\STATESPACE|}\quad \widetilde{V}_T(c) = cK  \quad\text{and}\quad
  \forall t \in \ic{0,T{-}1}\quad
  \widetilde{V}_t(c) = \inf_{\control \in \CONTROL} \widetilde{\B}^{\control}_{t}\np{\widetilde{V}_{t+1}}\np{c}
  \eqfinp
  \label{eq:modified-bellman-pomdp}
 \end{equation}
 First, we straightforwardly obtain by backward induction that the value functions
 $\np{\widetilde{V}_t}_{t \in \ic{0,T}}$ are homogeneous of degree $1$. Second
 we prove that the operator $\widetilde{\B}^{\control}_{t}$ preserves Lispchitz
 regularity.  We proceed as follows. Consider $c$ and $c'$ in
 $\RR_{+}^{|\STATESPACE|}$ and suppose that
 $| \widetilde{V}(c)-\widetilde{V}(c')| \le {\cal V} \norm{c'-c}_1$. Then we
 have that
 \begin{align*}
  \widetilde{\B}^{\control}_{t}\np{\widetilde{V}}\np{c'} -
  \widetilde{\B}^{\control}_{t}\np{\widetilde{V}}\np{c}
   & = (c' - c) L_t^\control
  +  \sum_{\observation \in \OBSERVATION_{t+1}} \widetilde{V}\bp{c' M^{\control, \observation}_t} -
  V\bp{ c M^{\control, \observation}_t}
  \\
   & \le {\cal L}\norm{c'-c}_1
  + \sum_{\observation \in \OBSERVATION_{t+1}} {\cal V}
  \norm{c' M^{\control, \observation}_t - cM^{\control, \observation}_t}_1
  \\
   & \le {\cal L}\norm{c'-c}_1
  + {\cal V}
  \sum_{\substack{\observation \in \OBSERVATION_{t+1} \\ \state' \in \STATESPACE}}
  \Big\lvert
  \sum_{\state \in \STATESPACE}
  \bp{c'(\state)-c(\state)} M^{\control, \observation}_t(\state,\state')
  \Big\rvert
  \\
   & \le {\cal L}\norm{c'-c}_1
  + {\cal V}
  \sum_{\state \in \STATESPACE}
  |c'(\state)-c(\state)|
  \sum_{\substack{\observation \in \OBSERVATION_{t+1} \\ \state' \in \STATESPACE}}
  M^{\control, \observation}_t(\state,\state')
  \\
   & \le {\cal L}\norm{c'-c}_1
  + {\cal V}
  \sum_{\state \in \STATESPACE}
  |c'(\state)-c(\state)|
  \\
   & \le
  \Bp{{\cal L} +{\cal V}} \norm{c'-c}_1
  \eqfinp
 \end{align*}

 As a pointwise minimum of Lipschitz mappings having the same Lipschitz constant is Lipschitz, we obtain the same Lispchitz
 constant for the operators $\inf_{\control \in \CONTROL}\widetilde{\B}^{\control}_{t}$. Then, using the fact that
 $\overline{V}_T = K$ we obtain by backward induction that the Bellman value function $\widetilde{V}_{t}$ is
 $({\cal L}(T-t) + {\cal K})$-Lipschitz
 for $t\in \ic{0,T}$  where ${\cal K}= \norm{K(\cdot)}_{\infty}$.

 \noindent $\bullet$ We prove now an intermediate result to link the solutions of the Bellman Equation~\eqref{eq:modified-bellman-pomdp}
 to the Bellman Equation~\eqref{eq:bellman-eq}. Suppose that $\widetilde{V}$ is
 $1$-homogeneous and such that $\widetilde{V}(b) = V(b)$ for all
 $b \in \Delta_{|\STATESPACE|}$. Then, We prove that
 $\widetilde{\B}^{\control}_{t}\np{\widetilde{V}}(b)=
  {\B}^{\control}_{t}\np{{V}}(b)$ for all $b \in \Delta_{|\STATESPACE|}$. For
 $b\in \Delta_{|\STATESPACE|}$, we successively have that
 \begin{align}
  \widetilde{\B}^{\control}_{t}\np{\widetilde{V}}\np{b}
   & =
  b L_t^\control
  +  \sum_{\observation \in \OBSERVATION_{t+1}}
  \widetilde{V}
  \bp{b M^{\control, \observation}_t}
  \\
   & =      b L_t^\control
  +  \sum_{\observation \in \OBSERVATION_{t+1}}
  \np{b M^{\control, \observation}_t\mathbf{1}}
  \widetilde{V}
  \bp{ \frac{b M^{\control, \observation}_t}{b M^{\control, \observation}_t\mathbf{1}}}
  \tag{$\widetilde{V}$ is $1$-homogeneous}
  \\
   & =
  b L_t^\control
  +  \sum_{\observation \in \OBSERVATION_{t+1}}
  \np{b M^{\control, \observation}_t\mathbf{1}}
  {V}
  \bp{ \frac{b M^{\control, \observation}_t}{b M^{\control, \observation}_t\mathbf{1}}}
  \tag{$\widetilde{V}=V$ on $\Delta_{|\STATESPACE|}$}
  \\
   & = {\B}^{\control}_{t}\np{{V}}\np{b}
  \eqfinp
 \end{align}

 \noindent $\bullet$ Now we turn to solutions of Bellman Equation~\eqref{eq:bellman-eq}.
 Since $\widetilde{V}_T(c) = c K$ for all $c\in \RR_{+}^{|\STATESPACE|}$ and
 $V_T(b) = bK$ for all $b \in \Delta_{|\STATESPACE|}$, the two mappings $V_T$ and $\widetilde{V}_T$ coincide
 on the simplex of dimension $|\STATESPACE|$. Then gathering the previous steps we obtain that
 $V_t$ and $\widetilde{V}_t$ coincide
 also on the simplex of dimension $|\STATESPACE|$ for all $t \in \ic{0,T}$.
 Finally, for all $t \in \ic{0,T}$ $\widetilde{V}_t$ being $({\cal L}(T-t) + {\cal K})$-Lipschitz
 we obtain the same result for $V_t$.
\end{proof}

\subsection{Value of  $\B_{t}\np{V_{t+1}}$ when $V_{t+1} = \min_{\alpha \in \Gamma_{t+1}} \proscal{\alpha}{\belief}$}

Assume that $V_{t+1}: \belief \mapsto \min_{\alpha \in \Gamma_{t+1}}
 \proscal{\alpha}{\belief}$ where $\Gamma_{t+1} \subset
 \RR^{|\STATESPACE|}$. Then we obtain that

\begin{align}
 \B_{t}\np{V_{t+1}}\np{\belief}
  & =
 \min_{\control \in \CONTROL_t}
 \Bp{{\belief L_t^u}
  +  \sum_{\observation \in \OBSERVATION_{t+1}}
  \np{\belief_t M^{\control, \observation}_t \mathbf{1}}
  V_{t+1}\Bp{
   \frac{ \belief {M^{\control, \observation}_t}}{\belief {M^{\control, \observation}_t} \mathbf{1}}
  }}
 \\
  & =
 \min_{\control \in \CONTROL_t}
 \Bp{{\belief L_t^u}
  +  \sum_{\observation \in \OBSERVATION_{t+1}}
  \np{\belief M^{\control, \observation}_t \mathbf{1}}
  \min_{\alpha \in \Gamma_{t+1}} \Bp{
   \frac{ \belief {M^{\control, \observation}_t} \alpha }{  \belief {M^{\control, \observation}_t} \mathbf{1}
   }}}
 \\
  & =
 \min_{\control \in \CONTROL_t}
 \Bp{{\belief L_t^u}
  +  \sum_{\observation \in \OBSERVATION_{t+1}}
  { \belief {M^{\control, \observation}_t} \alpha^{\sharp}({\control, \observation}) }
  \tag{with $\alpha^{\sharp}({\control, \observation})
    = \argmin_{\alpha \in \Gamma_{t+1}}
    \frac{ \belief {M^{\control, \observation}_t} \alpha }
    {  \belief {M^{\control, \observation}_t} \mathbf{1}}$}
 }
 \\
  & =
 \min_{\control \in \CONTROL_t}\belief
 \Bp{ L_t^u
  +  \sum_{\observation \in \OBSERVATION_{t+1}}
  { {M^{\control, \observation}_t} \alpha^{\sharp}({\control, \observation}) }}
 \\
  & =
 \min_{\alpha \in  \Gamma_t} \proscal{\alpha}{\belief}
 \eqfinv
\end{align}
with
$\Gamma_t = \bset{{ L_t^u + \sum_{\observation \in \OBSERVATION_{t+1}} {
     {M^{\control, \observation}_t} \alpha^{\sharp}({\control, \observation})
    }}} {\control \in \CONTROL_t \,\text{and}\, \alpha^{\sharp}({\control,
   \observation}) = \argmin_{\alpha \in \Gamma_{t+1}} \frac{ \belief
   {M^{\control, \observation}_t} \alpha } { \belief {M^{\control,
      \observation}_t} \mathbf{1}}}$.  We therefore obtain that the Bellman
value function at time $t$ has the same form as the Bellman value function at
time $t+1$.

We are in a context where the Bellman function that is to to be computed is
polyhedral concave with a huge polyhedron. It is thus tempting to use our
algorithm with polyhedral concave upper approximations and sup of quadratic or
Lipschitz mappings as lower approximations.

The \emph{Problem-child trajectory} technique is used in POMDP algorithms as an heuristic
but without a convergence proof as far as we have investigated.

\subsection{A lower bound of $\B_{t}\np{V_{t+1}}$}
We consider a special case where ${V}_{t+1}:\STATESPACE \to \RR$ is
given by ${V}_{t+1}(\belief)= \proscal{\belief}{\widehat{V}_{t+1}}$ and we compute
$\B_{t}\np{V_{t+1}}$ as follows
\begin{align*}
 \B_{t}\np{V_{t+1}}\np{\belief}
  & =
 \min_{\control \in \CONTROL_t}
 \Bp{
  {\belief} {L_t^u}
  +  \sum_{\observation \in \OBSERVATION_{t+1}}
  \belief {M^{\control, \observation}_t} \widehat{V}_{t+1}}
 \\
  & =
 \min_{\control \in \CONTROL_t}
 \Bp{ {\belief}L_t^u
  + \sum_{\observation \in \OBSERVATION_{t+1}, \state \in \STATESPACE_t, \state' \in \STATESPACE_{t+1}}
  \ObservationLaw{\observation}{\state'}{\control}{t+1}
  \TransitionState{\control}{\state}{\state'}{t}
  \belief\np{\state}
  \widehat{V}_{t+1}\np{\state'}}
 \\
  & =
 \min_{\control \in \CONTROL_t}
 \Bp{{\belief}{L_t^u}
  + \sum_{ \state \in \STATESPACE_t, \state' \in \STATESPACE_{t+1}}
  \TransitionState{\control}{\state}{\state'}{t}
  \belief\np{\state}
  \widehat{V}_{t+1}\np{\state'}}
 \tag{$\sum_{\observation} \ObservationLaw{\observation}{\state'}{\control}{t+1} = 1$}
 \\
  & \ge
 \sum_{\state \in \STATESPACE_t} \belief\np{\state}
 \min_{\control \in \CONTROL_t} \Bp{L_t\np{\control,\state}
  +\sum_{\state' \in \STATESPACE_{t+1}}
  \TransitionState{\control}{\state}{\state'}{t}
  \widehat{V}_{t+1}\np{\state'}}
 \\
  & =
 \sum_{\state \in \STATESPACE_t} \belief(\state)\widehat{V}_t(\state)
 = \belief \widehat{V}_t
 \eqfinv
\end{align*}
with
\begin{equation}
 \widehat{V}_t(\state)
 = \min_{\control \in \CONTROL_t} \Bp{L_t\np{\control,\state}
  +\sum_{\state' \in \STATESPACE_{t+1}}
  \TransitionState{\control}{\state}{\state'}{t}
  \widehat{V}_{t+1}\np{\state'}}
 \eqfinp
\end{equation}
Using the fact that at time $T$ we have that $V_T =  \proscal{\belief}{\widehat{V}_T}$ with ${\widehat{V}_T}=K$ we obtain that
for all $t\in \ic{0,T}$ $V_t \ge \proscal{\belief}{\widehat{V}_t}$ where ${\widehat{V}_t}$ is the Value function of the fully observed Bellman equation associated to the POMDP.

\bibliographystyle{plain}

\end{document}